\newtheorem{theorem}{Theorem}[section]
\newtheorem{Corollary}[theorem]{Corollary}
\newtheorem{lemma}[theorem]{Lemma}
\newtheorem{proposition}[theorem]{Proposition}
\newtheorem{remark}[theorem]{Remark}
\numberwithin{equation}{section}
\numberwithin{equation}{section}
\numberwithin{equation}{section}
\begin{document}
\title[Ergodicity of the fiber lay down process]{Geometric Ergodicity of a hypoelliptic diffusion modelling the melt-spinning process of nonwoven materials}
\author{Martin Kolb}\footnotetext{corresponding author : M.Kolb@warwick.ac.uk}
\address{University of Warwick, Department of Statistics,
Coventry, CV4 7AL, United Kingdom}
\email{M.Kolb@Warwick.ac.uk}
\author{Mladen Savov}
\address{University of Oxford, Department of Statistics, 1 South Parks Road
Oxford, CV4 7AL, United Kingdom}
\email{mladensavov@hotmail.com}
\author{Achim W\"ubker}
\address{Universit\"at Osnabr\"uck, Fachbereich Mathematik, Albrechtstrasse 28a, 49076 Osnabr\"uck, Germany}
\email{awuebker@Uni-Osnabrueck.de}
\begin{abstract}
We analyze the large time behavior of a stochastic model for the lay-down of fibers on a conveyor belt in the production process of nonwovens.  It is shown, that under weak conditions this degenerate diffusion process is strong mixing, confirming a conjecture of Grothaus and Klar. Moreover, under some additional assumptions even geometric ergodicity is established using probabilistic tools -- described in the book of Meyn and Tweedie -- in combination with methods from stochastic analysis.  
\end{abstract}
\maketitle
\section{Introduction and Notation}
Motivated by industrial production methods  of nonwoven materials, several mathematical models of different complexity have been developed in \cite{BGKMW07}, \cite{MW06} and \cite{GKMW07} in order to accurately describe and optimize these web forming processes. In the melt-spinning process a large amount of polymer fibers are exposed to an turbulent air flow, which induces an entanglement of the different fibers and eventually the formation of a web. Production methods of this type are used in the textile, hygiene as well as the building industry. Typical products include clothing textiles, industrial filters and insulating materials.  For more details concerning the engineering background as well as the modeling framework we refer to \cite{KMW} and references therein.  In this work we are investigating the simplest of these models. This stochastic model for the fiber lay down process has been presented in \cite{GKMW07}.  It is described by a stochastic differential system modeling the image of the fiber motion on the conveyer belt under the influence of the turbulent  air flow. The interaction of the fiber motion and the air flow is formulated via the following stochastic differential equation
\begin{equation}\label{i:SDE}
\begin{split}
d\xi _t&= \tau(\alpha_t) \,dt\\
d\alpha_t &= \sigma \, dB_t - \nabla \varphi (\xi_t)\cdot \tau(\alpha_t)^{\perp}\,dt,
\end{split}
\end{equation}
where $\tau(\alpha)=\bigl(\cos(\alpha),\sin(\alpha)\bigr)^{T}$ and $\tau(\alpha)^{\perp}=(-\sin(\alpha),\cos(\alpha)\bigr)$.  Let us emphasize that it is natural to consider the driving  Brownian motion $(B_t)_{t\geq 0}$ and thus also the process $(\alpha_t)_{t \geq 0}$ as processes on the circle $\mathbb{S}_1$ or alternatively in $[0,2\pi]$ with periodic boundary conditions.  The natural state space $S$ for the SDE \eqref{i:SDE} is therefore $S=\mathbb{R}^2\times \mathbb{S}_1$. The generator $L_{\varphi}$ of this process $(X_t)_{t \geq 0}=(\xi_t,\alpha_t)_{t \geq 0}$ is of course given by
\begin{displaymath}
L_{\varphi} = \frac{\sigma^2}{2}\partial^2_{\alpha} + \cos(\alpha)\partial_{\xi_1} + \sin(\alpha)\partial_{\xi_2} - \nabla\varphi(\xi)\cdot \tau(\alpha)^{\perp}\partial_{\alpha}.
\end{displaymath}
This differential operator is obviously neither sectorial nor elliptic and therefore the usual properties are much more difficult to establish than in the standard elliptic situation. Usually even the proof of existence of an invariant distribution and the derivation of its basic properties can be very demanding.
In the case of \eqref{i:SDE} fortunately a straightforward calculation -- using the generator --  (see Proposition \ref{p:invariantstate} below as well as Theorem 3.2 in \cite{GK}) shows that  the measure $\mu$ on $S$, given by 
\begin{equation}\label{e:invariantstate}
\mu(d\xi,d\alpha) := \frac{1}{N}e^{-\varphi(\xi)}\,d\xi\,d\alpha
\end{equation}
is a promising candidate for an invariant distribution for the process $(\xi_t,\alpha_t)_{t \geq 0}$, once
\begin{displaymath}
N:= \int _S e^{-\varphi(\xi)}\,d\xi\,d\alpha < \infty.
\end{displaymath}
In the recent paper \cite{GK} M. Grothaus and A. Klar use analytic methods close to those presented in \cite{OT} in order to show that 
\begin{equation}\label{e:GKresult}
\biggl\| \frac{1}{t}\int_0^tf(X_s)\,ds-\mathbb{E}_{\mu}\bigl[ f \bigr]\biggr\|_{L^2(\mathbb{P}_\mu)} \leq \frac{1}{\sqrt{c}}\biggl(\frac{2}{t}+ \frac{c_{\sigma,\varphi}}{\sqrt{t}}\biggr)\bigl\| f-\mathbb{E}_{\mu}[f]\bigr\|_{L^2(\mu)}.
\end{equation}
Actually, Grothaus and Klar are even able to provide a rather explicit description of the constants involved. On the one hand this result is obviously quite satisfactory as it gives explicit upper bounds on the rate of convergence. On the other hand it only makes an assertion about the large time behavior of time averages and not of the process itself. The authors of \cite{GK} conjecture that for smooth potentials the process $(X_t)_{t \geq 0}$ is in fact also strong mixing in the sense of 
\begin{displaymath}
\lim_{t \rightarrow \infty} \int_S \bigl| \mathbb{E}_x[f(X_t)]-\mathbb{E}_{\mu}[f]  \bigr|^2\,d\mu(x)= 0.
\end{displaymath}
We will prove this for a larger class of potentials $\varphi$. A rather immediate question is, whether one has a geometric rate of convergence. This will be answered affirmatively in section \ref{s:geomrate} under further assumptions on $\varphi$ in the sense that under appropriate conditions Theorem \ref{t:main} implies
\begin{displaymath}
-\lim_{t \rightarrow \infty}\frac{1}{t}\log \bigl\| \mathbb{P}_x\bigl(X_t \in \cdot\bigr) -\mu \bigr\|_{TV}> 0
\end{displaymath}
for every $x\in S$. In contrast to \cite{GK} we use mainly probabilistic techniques and do not rely on Dirichlet form and semigroup methods. A geometric rate of convergence is usually associated to the spectral gap property in some suitable chosen norm (see e.g. \cite{H}, \cite{MTBook}, \cite{MT93} and \cite{Wu04})  and therefore one might try to prove this using methods from spectral theory. The non-ellipticity of this process makes such an approach rather difficult and we do not know, whether recently developed analytic tools (see \cite{Vil}) might be applied in the situation at hand.  In any case, our approach only relies on rather basic arguments much more elementary than those needed in the theory of hypocoercivity. Our approach has the potential to extend to the situation of a moving conveyor belt (see e.g. equation (6) in \cite{KMW}), where the existence of a stationary distribution is not known apriori. Last but not least we believe that our stochastic analysis shows once more the power of the coupling approach outlined in \cite{MTBook} and offers additional insight into the probabilistic mechanism underlying the fiber lay down process. 

The plan for this paper  is the following : In the section 2.1 we present existence, uniqueness as well as some basic properties of the solution of the solutions to \label{i:SDE}. Some of these are known but we give a complete account in order to be independent from \cite{GK}, where it is made use of the theory of generalized Dirichlet forms; others contain new results such as e.g. the strong Feller property proved in Proposition \ref{p:Feller}. In section 2.2 we prove ergodicity under weak conditions on $\varphi$ and in section 3 we show geometric ergodicity under stronger assumption on $\varphi$. 
\section{First Properties of The Fiber Lay Down Process}
We start by establishing existence and uniqueness of solutions of \eqref{i:SDE} in the weak sense. 
\begin{proposition}\label{p:Girsanov}
Assume that $\varphi \in C^1(\mathbb{R}^2)$. For every initial state $x=(\xi_0,\alpha_0) \in S$ the stochastic differential equation \eqref{i:SDE} has a unique weak solution $\mathbb{P}_x$, which is non-explosive. Let us denote by $\mathbb{Q}_x$ the solution for the case $\varphi=0$ then for every $t>0$ we have $\mathbb{P}_x\restriction \mathcal{F}_t << \mathbb{Q}_x \restriction \mathcal{F}_t$ and the following Girsanov formula holds:
\begin{equation}
\frac{d\mathbb{P}_x}{d\mathbb{Q}_x}\restriction \mathcal{F}_t = exp\biggl(- \int_0^t \nabla \varphi(\xi_s)\cdot \tau(\alpha_s)\,d\alpha_s- \frac{\sigma^2}{2}\int_0^t \bigl|\nabla\varphi(\xi_s)\cdot \tau(\alpha_s)\bigr|^2\,ds \biggr)
\end{equation}
\end{proposition}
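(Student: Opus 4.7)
The plan is to use Girsanov's theorem, taking the trivial case $\varphi\equiv 0$ as the reference measure. When $\varphi\equiv 0$ the equations decouple: the angular component is simply $\alpha_t = \alpha_0 + \sigma B_t\pmod{2\pi}$ on $\mathbb{S}_1$, and $\xi_t = \xi_0 + \int_0^t \tau(\alpha_s)\,ds$ is a pathwise Lebesgue integral. Hence $\mathbb{Q}_x$ is constructed unambiguously on canonical path space and the associated process is non-explosive; crucially, since $|\tau(\alpha)|\equiv 1$, we obtain the deterministic pathwise bound $|\xi_t - \xi_0|\leq t$ for all $t\geq 0$, $\mathbb{Q}_x$-almost surely.

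The key observation is that this bound reduces Novikov's condition to a triviality. Fix $T>0$. Since $\varphi\in C^1(\mathbb{R}^2)$, the quantity $M_T := \sup\{|\nabla\varphi(y)|:|y-\xi_0|\leq T\}$ is finite, and under $\mathbb{Q}_x$ the proposed Girsanov integrand $u_s := \sigma^{-1}\nabla\varphi(\xi_s)\cdot\tau(\alpha_s)^{\perp}$ is bounded in absolute value by $M_T/\sigma$ on $[0,T]$ almost surely. Therefore the Doléans exponential
\begin{equation*}
Z_t := \exp\!\left(-\int_0^t u_s\,dB_s - \tfrac{1}{2}\int_0^t u_s^2\,ds\right)
\end{equation*}
is a true $\mathbb{Q}_x$-martingale. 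Defining $\mathbb{P}_x$ on each $\mathcal{F}_t$ by $d\mathbb{P}_x/d\mathbb{Q}_x = Z_t$ (consistent in $t$, hence extendable via Kolmogorov) and applying Girsanov, one gets that $\tilde B_t := B_t + \int_0^t u_s\,ds$ is a $\mathbb{P}_x$-Brownian motion; substituting back into the $\mathbb{Q}_x$-dynamics shows that $(\xi_t,\alpha_t)$ satisfies \eqref{i:SDE} weakly under $\mathbb{P}_x$. The explicit density formula in the proposition then follows by rewriting $\int_0^t u_s\,dB_s$ in terms of $d\alpha_s = \sigma\,dB_s$ valid under $\mathbb{Q}_x$.

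For uniqueness, let $\mathbb{P}'_x$ be any weak solution of \eqref{i:SDE}. The bound $|\xi_t-\xi_0|\leq t$ still holds $\mathbb{P}'_x$-almost surely since it depends only on the first equation, so the reciprocal density $Z_t^{-1}$ again lies in the Novikov regime and a reverse Girsanov transform produces a measure solving the $\varphi\equiv 0$ equation; by the pathwise uniqueness of that explicit case the transformed measure must coincide with $\mathbb{Q}_x$, forcing $\mathbb{P}'_x = \mathbb{P}_x$. Non-explosion is automatic from the same deterministic bound on $\xi$ and the compactness of $\mathbb{S}_1$. The only delicate point is the first step: because $\nabla\varphi$ is assumed merely continuous and not Lipschitz, the standard strong-solution theory does not apply, and the entire argument hinges on exploiting the unit-norm $\xi$-drift to turn Novikov's condition into a deterministic inequality on compact time intervals.
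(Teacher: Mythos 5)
Your proof is correct, and it follows the same overall strategy as the paper (construct $\mathbb{P}_x$ as a Girsanov transform of the explicit $\varphi\equiv 0$ solution $\mathbb{Q}_x$, and get weak uniqueness by transforming back), but the way you verify that the exponential density is a true martingale is genuinely different and in fact shorter. The paper proceeds in two stages: it first invokes the Cameron--Martin--Girsanov theorem under the additional hypothesis that $\nabla\varphi$ is bounded, and then removes that hypothesis by a localization argument, proving non-explosion via the Lyapunov function $\Psi(\xi,\alpha)=|\xi|^2+1$ and deducing $\int M_t\,d\mathbb{Q}_x=1$ from $\mathbb{P}_x(\sigma_R>t)\to 1$. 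You instead observe that the unit-speed drift forces $|\xi_t-\xi_0|\le t$ deterministically (under $\mathbb{Q}_x$, and under any candidate solution), so that on each finite horizon the integrand $\nabla\varphi(\xi_s)\cdot\tau(\alpha_s)^{\perp}$ is almost surely bounded by a deterministic constant and Novikov's condition is immediate; no truncation of $\nabla\varphi$ and no Lyapunov function are needed, and non-explosion is automatic. This is exactly the simplification the authors themselves concede in the remark following their proof (``the process can not reach infinity in finite time''); they keep the Lyapunov machinery only because it generalizes to more complex lay-down models where no such pathwise speed bound is available. So your argument buys brevity and transparency at the cost of that robustness. Two cosmetic points: the constant normalization in the exponent (powers of $\sigma$, and $\tau$ versus $\tau^{\perp}$) should be tracked carefully when you rewrite $\int_0^t u_s\,dB_s$ as an integral against $d\alpha_s$ --- the paper's displayed formula is itself not fully consistent with the drift in \eqref{i:SDE} on this point --- and in the uniqueness step you should note explicitly that the density is the same measurable functional of the path $(\xi_s,\alpha_s)_{s\le t}$ under both measures, which is what lets you conclude $\mathbb{P}'_x=\mathbb{P}_x$ after identifying the de-drifted law with $\mathbb{Q}_x$.
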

\begin{proof}
The proof essentially follows ideas outlined in \cite{Wu01} and is devided into two simple steps:\\
\textit{Step 1:} We first look at the situation $\varphi = 0$. In this case the stochastic differential equation is well known to have (explicit) solutions, which are unique in the weak as well as in the pathwise sense. The solution forms a conservative strong Markov process $(\xi^0_t,\alpha^0_t)_{t \geq 0}$ having the Feller property in the sense that for every bounded measurable $g: S \rightarrow \mathbb{R}$ and $t > 0$ the function 
\begin{displaymath}
S \ni x \mapsto \int g((\xi^0_t,\alpha^0_t))\,d\mathbb{Q}_x
\end{displaymath}
is continuous.\\ 
\textit{Step 2:} We now look at the case $\varphi \in C^1(\mathbb{R}^2)$ assuming additionally that $\nabla \varphi$ is bounded.  In this situation we rely on the Cameron-Martin-Girsanov formula as stated e.g. as Theorem 9.1 in \cite{PinskyBook}.  According to this theorem or respectively Corollary 9.3 in \cite{PinskyBook} we conclude that the martingale problem associated to the operator $L_{\varphi}$ has a unique solution $(\mathbb{P}_x)_{x \in S}$ satisfying 
\begin{equation}\label{e:Girsanov}
\mathbb{P}_x  = \exp\biggl(- \int_0^t \nabla \varphi(\xi^0_s)\cdot \tau(\alpha^0_s)\,d\alpha^0_s- \frac{\sigma^2}{2}\int_0^t \bigl|\nabla\varphi(\xi^0_s)\cdot \tau(\alpha^0_s)\bigr|^2\,ds \biggr)\mathbb{Q}_x,
\end{equation}
where we denote by $\mathbb{Q}_x$ the law of the Markov process $(\xi^0_t,\alpha^0_t)_{t \geq 0}$ from Step 1 with the initial condition $(\xi^0_0,\alpha^0_0)=x$. In particular the exponential in  \eqref{e:Girsanov} is a true martingale.\\
\textit{Step 3:} We now drop the condition of boundedness of $\nabla \varphi$.
\\
\\
Let us denote by 
\begin{displaymath}
\sigma_R=\inf\lbrace t \geq 0 \mid X_t=(\xi_t,\alpha_t) \notin  \overline{B(0,R)}\times \mathbb{S}_1\rbrace
\end{displaymath}
the first exit time from the compact set $ \overline{B(0,R)} \times \mathbb{S}_1$.
We now show that for every $x \in {S}$ and $t>0$
\begin{equation}\label{e:nonexplosion}
\lim_{R \rightarrow \infty}\mathbb{P}_x\bigl(\sigma_R > t \bigr)=1.
\end{equation}
Assume for a moment that \eqref{e:nonexplosion} holds true, then 
\begin{equation}
\int M_t \, d\mathbb{Q}_x \geq  \int_{\lbrace \sigma_R > t \rbrace} M_{t \wedge \sigma_R}\,d\mathbb{Q}_x =\mathbb{P}_x\bigl(\sigma_R > t\bigr) \rightarrow 1
\end{equation}
as $R \rightarrow \infty$. This shows that the local martingale $(M_t)_{t \geq 0}$ is a true martingale. 
\\
\\
In order to prove \eqref{e:nonexplosion} we rely on the Liapunov method as presented e.g. in section 6.7 \cite{PinskyBook}. Let us set $\Psi(\xi,\alpha)=|\xi|^2+1$, then for all $\xi=(\xi_1,\xi_2)$ with $\xi_1,\xi_2\geq 1$ 
\begin{displaymath}
L_{\varphi}\Psi(\xi,\alpha) = \cos(\alpha)\partial_{\xi_1}\Psi(\xi,\alpha) + \sin(\alpha)\partial_{\xi_2}\Psi(\xi,\alpha) \leq 2\bigl(\xi_1 + \xi_2\bigr)\leq 2\Psi(\xi,\alpha).
\end{displaymath}
Consequently, 
\begin{displaymath}
\bigl( e^{-2(t\wedge\sigma_R)}\Psi(X_{t \wedge\sigma_R})\bigr)_{ t \geq 0}
\end{displaymath}
defines a $\mathbb{P}_x$-supermartingale and therefore
\begin{equation}\label{e:supermartingale}
\mathbb{E}_x\bigl[e^{-2(t \wedge\sigma_R)}\Psi (X_{t\wedge\sigma_R})\bigr] \leq \Psi(x).
\end{equation}
Using the fact that $\Psi(X_{\sigma_R}) \geq R^2+1$ on $\lbrace \tau_R < \infty\rbrace$ we get
\begin{displaymath}
\mathbb{P}_x\bigl(\sigma_R \leq t\bigr) \leq \frac{e^{2t}}{R^2+1}\mathbb{E}_x\bigl[\mathbf{1}_{\lbrace \sigma_R \leq t\rbrace}e^{-2(\sigma_R\wedge t)}\Psi( X_{t\wedge\sigma_R})\bigr] \leq \frac{e^{2t}}{R^2+1}\Psi(x),
\end{displaymath}
which implies \eqref{e:nonexplosion}.
\end{proof}
\begin{remark}
The fact that the process $(X_t)_{t \geq 0}$ does not explode is more or less obvious due to the fact that the 'velocity' in the $\xi$-component is bounded by one. Thus the process can not reach infinity in finite time. This is reflected in the elementary choice of the Liapunov function and the fact that the $\alpha$-component does not play any role here. We included the proof here as the Liapunov function method  will most probably be needed in the analysis of more complex models of the fiber lay down process. 
\end{remark}
\subsection{Feller and Recurrence Properties}
We now establish smoothing properties of the stochastic process $(\xi_t,\alpha_t)_{t\geq 0}$. These are essential in order to prove the mixing properties in the next section. Again our argument is perturbation theoretic in the sense, that we look for conditions, which allow to transfer these properties known to hold in the case $\varphi = 0$ to the general situation. Our arguments are again inspired by the corresponding ones in \cite{Wu01} (see also \cite{MS00} for the use of Girsanov's formula in the derivation of Feller properties).
\begin{proposition}\label{p:Feller}
Assume that $\varphi \in C^1(\mathbb{R}^2)$ and let $(P_t(x,dx'))_{t \geq 0}$ denote the transition probability kernels of the Markov process $((X_t)_{t \geq 0},(\mathbb{P}_x)_{x\in S})$. Then for every $t >0$ and $x \in S$ one has
\begin{displaymath}
P_t(x,dx')=p_t(x,x')dx',\,\quad p_t(x,x')\geq 0
\end{displaymath}
and 
\begin{displaymath}
S \ni x \rightarrow p_t(x,\cdot\,) \in L^1(S)
\end{displaymath}
is continuous. In particular, $P_t$ is strong Feller for each $t>0$. 
\end{proposition}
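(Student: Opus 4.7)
The plan is to transfer the density property and its $L^1$-continuity from the free case $\varphi=0$, where they follow from hypoellipticity, to the general case via the Girsanov formula of Proposition \ref{p:Girsanov}; the final upgrade from pointwise convergence of densities to $L^1$-convergence is a Scheff\'e argument, and the strong Feller property then drops out for free since $L^1$-convergence of densities is TV-convergence of the laws.

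For $\varphi=0$ the noise field $X_1:=\sigma\partial_\alpha$, the drift field $X_0:=\cos(\alpha)\partial_{\xi_1}+\sin(\alpha)\partial_{\xi_2}$, and their bracket
\begin{displaymath}
[X_0,X_1]=\sigma\bigl(\sin(\alpha)\partial_{\xi_1}-\cos(\alpha)\partial_{\xi_2}\bigr)
\end{displaymath}
span the tangent space of $S$ at every point, so H\"ormander's theorem yields a jointly $C^\infty$ transition density $q_t(x,y)$ for $\mathbb{Q}_x\circ X_t^{-1}$. Joint continuity of $q_t$ together with $\int q_t(x,y)\,dy=1$ makes $x\mapsto q_t(x,\cdot)\in L^1(S)$ continuous by Scheff\'e's lemma.

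Assume next that $\nabla\varphi$ is bounded. By Proposition \ref{p:Girsanov}, $\mathbb{P}_x\ll\mathbb{Q}_x$ on $\mathcal{F}_t$ with density $M_t$, and conditioning $M_t$ on $X_t=y$ yields
\begin{displaymath}
p_t(x,y)=q_t(x,y)\,\mathbb{E}_{\mathbb{Q}_x}[M_t\mid X_t=y],
\end{displaymath}
so $P_t(x,\cdot)$ is absolutely continuous with the claimed density. For the continuity of $x\mapsto p_t(x,\cdot)\in L^1(S)$ I would realise $\tilde X^x$ and $M^x_t$ on a common Wiener space as continuous functionals of the driving Brownian motion and of the initial point $x$. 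As $x_n\to x$, $\tilde X^{x_n}_\cdot\to\tilde X^x_\cdot$ uniformly on $[0,t]$ almost surely, and the continuity of $\nabla\varphi$ combined with the uniform exponential integrability guaranteed by the boundedness of $\nabla\varphi$ (Novikov) gives $M^{x_n}_t\to M^x_t$ in $L^1$. Together with the continuity of $q_t$ from Step 1, this yields pointwise a.e.\ convergence of $p_t(x_n,\cdot)$ to $p_t(x,\cdot)$; since all are probability densities on $S$, Scheff\'e's lemma upgrades this to $L^1$-convergence.

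For a general $\varphi\in C^1(\mathbb{R}^2)$ I would localise via the stopping time $\sigma_R$ from the previous proof: choose a $C^1$ cutoff $\varphi_R$ with bounded gradient that agrees with $\varphi$ on $\overline{B(0,R)}\times\mathbb{S}_1$, note that the associated laws agree on $\{\sigma_R>t\}$, and use the supermartingale bound \eqref{e:supermartingale} to control the remainder uniformly in $x$ on compact sets as $R\to\infty$. The main technical obstacle I foresee is the step from weak convergence of the integrals $\int g\,p_t(x_n,\cdot)\,dy$ for continuous bounded $g$ (which follows directly from the Wiener-space representation) to the pointwise a.e.\ convergence of the densities needed for Scheff\'e; the factorisation $p_t=q_t\,h_t$ with $q_t$ jointly smooth and $h_t$ uniformly $L^p$-bounded in $q_t(x,y)\,dy$ is the device that rules out concentration effects. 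A slicker alternative would be to apply H\"ormander directly to $L_\varphi$, but this requires the stronger regularity $\varphi\in C^\infty$ not postulated in the statement, so the Girsanov route is the natural one.
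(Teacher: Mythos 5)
Your overall strategy (transfer from the hypoelliptic case $\varphi=0$ to the general case via Girsanov) is the same as the paper's, and Step 1 is fine. The genuine gap is in the middle step: you factor $p_t(x,y)=q_t(x,y)\,\mathbb{E}_{\mathbb{Q}_x}[M_t\mid X_t=y]$ and claim that $L^1$-convergence $M_t^{x_n}\to M_t^{x}$ together with continuity of $q_t$ yields pointwise a.e.\ convergence of $p_t(x_n,\cdot)$. This does not follow: the conditional expectations are taken with respect to the different random variables $X_t^0(x_n)$, and $L^1$-convergence of the unconditioned martingales gives no pointwise control of $y\mapsto\mathbb{E}[M_t^{x_n}\mid X_t^0(x_n)=y]$; a priori these functions could oscillate on sets of small but positive measure for every $n$. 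You flag this yourself as the ``main technical obstacle'', but the proposed remedy (a uniform $L^p$-bound on the conditional factor) is neither established nor sufficient: it would rule out concentration, not produce the a.e.\ convergence that Scheff\'e requires.

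The paper avoids this issue by never attempting pointwise convergence of the densities. It first proves the strong Feller property directly, i.e.\ $P_tf(x_n)\to P_tf(x)$ for bounded \emph{measurable} $f$, by combining $L^1$-convergence of $M_t(x_n)$ -- obtained from convergence in probability plus $\mathbb{E}[M_t(x_n)]=1$ via Scheff\'e, so no Novikov condition or boundedness of $\nabla\varphi$ is needed and your localisation step becomes superfluous, since Proposition \ref{p:Girsanov} already gives a true martingale for all $\varphi\in C^1$ -- with convergence in probability of $f(X^0_t(x_n))$, the latter requiring uniform integrability of the free densities and Egorov's theorem because $f$ is only measurable. It then invokes a theorem of Revuz asserting that for a kernel admitting densities the strong Feller property already implies continuity of $x\mapsto p_t(x,\cdot)$ in $L^1$. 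To repair your argument you should either supply such a Revuz-type step in place of ``pointwise a.e.\ plus Scheff\'e'', or give a genuine proof of the a.e.\ convergence of the conditional expectations; note also that the convergence of $f(X^0_t(x_n))$ for merely measurable $f$ is itself a nontrivial point your sketch leaves unaddressed.
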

\begin{proof}
The proof very closely follows Proposition 1.2 in \cite{Wu01}. Consider the case $\varphi=0$ first. In this case the generator is obviously hypoelliptic and by H\"ormander's theorem (see e.g. chapter 7 in \cite{StrPDE}) we conclude that there exists $q_t \in C^{\infty}(S \times S)$ such that the transition probability $Q_t(x,dx')$ of $\mathbb{Q}$ satisfies
\begin{displaymath}
Q_t(x,dx') = q_t(x,x')dx'
\end{displaymath} 
Since $x \rightarrow \mathbb{Q}_x$ is continuous with respect to the weak convergence of probability measures we conclude the strong Feller property of $Q_t$ for every $t>0$. 

Now let $\varphi$ satisfy the conditions of Proposition \eqref{p:Girsanov}. Then according to the Girsanov formula established in the proof of this result we conclude that there exists $p_t(x,x')$ such that $P_t(x,dx')=p_t(x,x')dx'$ for every fixed $t>0$. As in \cite{Wu01} we conclude using a result of Revuz that it suffices to show, that $P_tf \in C_b(S)$ for every bounded and measurable $f:S\rightarrow \mathbb{R}$ and $t>0$.  

Let $(B_t)_{t\geq 0}$ be a Brownian motion in $[0,2\pi]$ with periodic boundary conditions defined on the filtered probability space $(\Omega,\mathcal{F},(\mathcal{F}_t)_{t\geq 0},\mathbb{Q})$.  We denote by  $(X^0_s(x))_{s\geq 0}$ the strong solution of \eqref{i:SDE} with $\varphi=0$ and with initial condition $X_0=x$, i.e. 
\begin{displaymath}
\xi_t=\begin{pmatrix} \int_0^t\cos(B_s)\,ds \\ \int_0^t\sin(B_s)\,ds \end{pmatrix}\,\text{  and  } \,\alpha_t=B_t.
\end{displaymath}
Then we have
\begin{equation}\label{e:girsform}
P_tf(x) = \mathbb{E}^{\mathbb{P}_x}\bigl[f(X_t)\bigr] = \mathbb{E}^{\mathbb{Q}}\bigl[ f(X^{0}_{t}(x))M_t(x)\bigr],
\end{equation}
where $M_t(x)$ is the exponential martingale given by the Girsanov formula in Proposition \ref{p:Girsanov}, but with $(\xi_s,\alpha_s)_{s\geq 0}$ substituted by the $(X^0_s(x))_{s\geq 0}$. Now let $(x_n)_{n\in \mathbb{N}} \subset S$ denote a sequence converging to $x \in S$. Then we have $(X^0_s(x_n))_{s\geq 0} \rightarrow (X^0_s(x))_{s\geq 0}$ locally uniformly in $\mathbb{Q}$--probability and by convergence properties of the stochastic integral we have $M_t(x_n) \rightarrow M_t(z)$ in $\mathbb{Q}$--probability. As for $n\in \mathbb{N}$ the processes $(M_t(z_n))_t$ and $(M_t(z))_t$ are martingales we conclude that $\mathbb{E}^{\mathbb{Q}}[M_t(z_n)]=1=\mathbb{E}^{\mathbb{Q}}[M_t(z)]$. Therefore according to Scheff\'e's Lemma we have 
\begin{equation}\label{e:l1convmart}
M_t(z_n) \rightarrow M_t(z) 
\end{equation}
in $L^1(\mathbb{Q})$ as $n \rightarrow \infty$. 

It remains to show that $f(X^0_t(x_n))$ converges to $f(X^0_t(x))$ as $n \rightarrow \infty$ in $\mathbb{Q}$-probability. Observe that together with \eqref{e:girsform} and \eqref{e:l1convmart} this completes the proof of 
\begin{displaymath}
\lim_{n \rightarrow \infty}P_tf(x_n)=P_tf(x).
\end{displaymath}
Let $\tilde{q}_t(\cdot,\cdot)$ be the density of $\mathbb{Q}\bigl( X^0_t(x)\in \,\cdot\,\bigr)$ with respect to $\mu$. From the strong Feller property of $(X_t^0(x))_{t \geq 0}$ and another application of Scheff\'e's lemma one deduces that the sequence $(q_t(x_n,\dot))_{n \in \mathbb{N}}$ is uniformly integrable in $L^1(\mu)$. Hence for any $\varepsilon > 0$ there exists $\delta>0$ such that for all measurable $A \subset S$,
\begin{displaymath}
\mu(A)< \delta \Rightarrow \int _A\tilde{q}_t(x_n,x)\mu(dx)=\mathbb{Q}\bigl( X^0_t(x_n)\in A \bigr)<\varepsilon\text{   for all $n$ and   }\mathbb{Q}\bigl( X^0_t(x)\in A \bigr)<\varepsilon.
\end{displaymath} 
By Egorov's theorem there exists a compact set $K\subset S$ such that $\mu(K^c)<\delta$ and $f\restriction K$ is uniformly continuous, i.e. for any $\eta>0$ there is $\delta'>0$ such that $|z-z'|<\delta', z,z'\in K \Rightarrow |f(z)-f(z')|<\eta$. There exists $N_0>0$ such that for $n\geq N_0$ 
\begin{displaymath}
\mathbb{Q}\bigl( \bigl| X^0_t(x_n)-X^0_t(x)|\geq \delta' \bigr)<\varepsilon
\end{displaymath}
and thus
\begin{equation*}
\begin{split}
\mathbb{Q}\bigl(\bigl| f(X^0_t(x_n))-f(X^0_t(x))\bigr|>\eta \bigr) &\leq \mathbb{Q}\bigl( \bigl| X^0_t(x_n)-X^0_t(x)|\geq \delta' \bigr)+ \mathbb{Q}\bigl(X^0_t(x_n) \notin K\bigr)\\
&+\mathbb{Q}\bigl(X^0_t(x) \notin K\bigr)\leq 3 \varepsilon.
\end{split}
\end{equation*}
As observed earlier this completes the proof.
\end{proof}
In order to make the paper independent from the approach used by Grothaus and Klar, which used the machinery of generalized Dirichlet forms we add
\begin{proposition}\label{p:invariantstate}
If $\varphi$ satisfies $e^{-\varphi} \in L^1(\mathbb{R}^2)$ then the measure $\mu$ introduced in \eqref{e:invariantstate} defines an invariant distribution for the process $(X_t)_{t \geq 0}$.
\end{proposition}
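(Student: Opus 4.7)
The plan is to verify that $\mu$ annihilates the generator on a suitable core and then invoke a standard result to conclude invariance. For the test class I would take $f \in C_c^\infty(\mathbb{R}^2\times \mathbb{S}_1)$, i.e.\ smooth, compactly supported in $\xi$, and smooth and periodic in $\alpha$. Since $e^{-\varphi} \in L^1$ implies $\mu$ is a finite measure, and since $\nabla\varphi \in C(\mathbb{R}^2)$ is bounded on the $\xi$-support of $f$, all integrals below are absolutely convergent and integration by parts produces no boundary terms.

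The main computation is to show $\int_S L_\varphi f\, d\mu = 0$. The term $\tfrac{\sigma^2}{2}\int \partial_\alpha^2 f \, e^{-\varphi}\, d\xi\, d\alpha$ vanishes after integrating by parts twice in $\alpha$, using periodicity. For the transport terms in $\xi$, integration by parts against $e^{-\varphi}$ yields
\begin{displaymath}
\int_S \bigl[\cos(\alpha)\partial_{\xi_1}f + \sin(\alpha)\partial_{\xi_2}f\bigr]e^{-\varphi}\,d\xi\,d\alpha = \int_S f\,\bigl(\nabla\varphi(\xi)\cdot\tau(\alpha)\bigr)e^{-\varphi}\,d\xi\,d\alpha.
\end{displaymath}
For the remaining term I integrate by parts in $\alpha$; since $\nabla\varphi$ is $\alpha$-independent and $e^{-\varphi}$ carries no $\alpha$-dependence, only $\tau(\alpha)^\perp$ gets differentiated. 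Using $\partial_\alpha \tau(\alpha)^\perp = -\tau(\alpha)$,
\begin{displaymath}
-\int_S \bigl(\nabla\varphi(\xi)\cdot\tau(\alpha)^\perp\bigr)\partial_\alpha f\, e^{-\varphi}\,d\xi\,d\alpha = -\int_S f\,\bigl(\nabla\varphi(\xi)\cdot\tau(\alpha)\bigr)e^{-\varphi}\,d\xi\,d\alpha,
\end{displaymath}
which exactly cancels the previous contribution. Hence $\int_S L_\varphi f\, d\mu = 0$ on the test class.

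To pass from this annihilation identity to genuine invariance $\mu P_t = \mu$, I would argue as follows. By Proposition \ref{p:Girsanov} the process is non-explosive and solves the martingale problem for $L_\varphi$, so for every $f \in C_c^\infty(S)$ the process $f(X_t) - f(x) - \int_0^t L_\varphi f(X_s)\,ds$ is a $\mathbb{P}_x$-martingale. Since $C_c^\infty(S)$ is a core for $L_\varphi$ and $\mu$ is a probability measure satisfying $\int L_\varphi f\,d\mu = 0$ for all $f$ in this core, the Echeverria--Weiss theorem (see e.g.\ Ethier--Kurtz, Theorem 4.9.17) applies to yield that $\mu$ is invariant for the associated semigroup $(P_t)_{t\geq 0}$. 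Alternatively, using the strong Feller regularity established in Proposition \ref{p:Feller} together with the hypoellipticity of $L_\varphi$ (in the $\varphi = 0$ part, modified by the bounded perturbation), $P_t f$ is smooth for $t>0$ and $\frac{d}{dt}\int P_t f\, d\mu = \int L_\varphi P_t f\, d\mu = 0$ can be justified directly by a truncation argument, giving the same conclusion.

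The routine part is the integration-by-parts computation, which works out cleanly because of the cancellation between the $\xi$-transport and the $\alpha$-drift built into the definition of $\tau(\alpha)^\perp$. The only slightly delicate step is the final passage from the formal adjoint identity to invariance of the semigroup: one must invoke either the martingale-problem machinery (Echeverria--Weiss) or a direct Kolmogorov-equation argument that swaps differentiation and integration. This is where one uses non-explosion and the smoothing properties already proved.
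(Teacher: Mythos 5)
Your computation is correct and is essentially the mirror image of the paper's: the paper computes the formal adjoint $\tilde{L}$ and checks $\tilde{L}\mu=0$, which is the same cancellation between the $\xi$-transport term and the $\alpha$-drift that you exhibit via integration by parts against test functions. Where you genuinely diverge is in the passage from the formal identity to actual invariance. The paper forms the $\mu$-transformed dual operator $\tilde{L}^{\mu}=\frac{1}{\mu}\tilde{L}(\mu\,\cdot\,)$, observes that it is again a drift-diffusion operator with no zeroth-order term, reruns the constructions of Propositions \ref{p:Girsanov} and \ref{p:Feller} to obtain a conservative Feller process for $\tilde{L}^{\mu}$, and then invokes Theorem 8.5 of \cite{PinskyBook} (conservativity of the dual process implies invariance of $\mu$). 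You instead invoke Echeverria's theorem, which only needs what is already established for the \emph{forward} process: well-posedness of the martingale problem for $L_{\varphi}$ with domain $C_c^{\infty}(S)$ (Proposition \ref{p:Girsanov}) together with $\int L_{\varphi}f\,d\mu=0$. Since $S$ is locally compact and separable, $C_c^{\infty}(S)$ is a dense subalgebra of $C_0(S)$, and $L_{\varphi}$ satisfies the positive maximum principle, Echeverria's theorem yields a stationary solution of the martingale problem with marginal $\mu$, and uniqueness identifies it with $\mathbb{P}_{\mu}$. This is arguably more economical, as it avoids constructing the dual process from scratch; the paper's route has the merit of staying entirely within the Pinsky framework it uses elsewhere. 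One caveat: your alternative sketch via $\frac{d}{dt}\int P_tf\,d\mu=\int L_{\varphi}P_tf\,d\mu$ is the weak point of the write-up, since $P_tf$ is not compactly supported and the truncation must be controlled against a possibly unbounded $\nabla\varphi$; lean on the Echeverria argument rather than this.
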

\begin{proof}
Again we emphasize that the result is proved in \cite{GK} by different means. As mentioned in the introduction a direct calculation shows that the dual $\tilde{L}$ of $L$ is given by
\begin{displaymath}
\tilde{L} = \frac{\sigma^2}{2}\partial_{\alpha}^2 - \cos(\alpha)\partial_{\xi_1} - \sin(\alpha)\partial_{\xi_2} + \nabla\varphi(\xi) \cdot \begin{pmatrix} -\sin(\alpha) \\ \cos(\alpha) \end{pmatrix}\partial_{\alpha} + \nabla\varphi(\xi)\cdot \begin{pmatrix} -\cos(\alpha)  \\ -\sin(\alpha) \end{pmatrix} 
\end{displaymath}
and that $\tilde{L}\mu=0$. Setting $\tilde{L}^{\mu}=\frac{1}{\mu}\tilde{L}\bigl(\mu\,\cdot)$, where by a slight abuse of notation $\mu$ is also used to denote the density of the measure  \eqref{e:invariantstate}, we get 
\begin{displaymath}
\tilde{L}^{\mu}= \frac{\sigma^2}{2}\partial_{\alpha}^2 - \cos(\alpha)\partial_{\xi_1} - \sin(\alpha)\partial_{\xi_2} + \nabla\varphi(\xi) \cdot \begin{pmatrix} -\sin(\alpha) \\ \cos(\alpha) \end{pmatrix}\partial_{\alpha}.
\end{displaymath}
Completely analogous to the proof of Proposition \ref{p:Girsanov} and Proposition \ref{p:Feller} one constructs a conservative Feller process solving the martingale problem associated to $\tilde{L}^{\mu}$. From the conservativity we conclude by Theorem 8.5 of \cite{PinskyBook} that $\mu$ is in fact an invariant distribution. Observe that Theorem 8.5 in \cite{PinskyBook} -- though formulated for elliptic diffusion -- holds true in our situation with the same proof.
\end{proof}
The next result is concerned with the positivity properties of the process. 
\begin{proposition}\label{l:irred}
Let $\varphi \in C^1(\mathbb{R}^2)$ and let $\bigl(\Omega,\mathcal{F},(\mathcal{F}_t)_{t \geq 0}, (\mathbb{P}_x)_{x \in S},(X_t)_{t \geq 0}\bigr)$ denote the unique weak solution of \eqref{i:SDE} on the path space $\Omega=C([0,\infty),S)$. If $B\subset S$ is an open subset of the state space and if $x \in S$ then there exists $T \geq 0$ such that for $t \geq T$
\begin{displaymath}
\mathbb{P}_x\bigl(X_t \in B\bigr)>0.
\end{displaymath} 
Moreover, every skeleton Markov chain is irreducible. 
\end{proposition}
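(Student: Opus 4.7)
My plan is to reduce to the unperturbed case via Girsanov and then use a direct support-theorem style argument based on explicit controllability of the drift-free system.

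First, Proposition \ref{p:Girsanov} gives $\mathbb{P}_x\restriction\mathcal{F}_t \ll \mathbb{Q}_x\restriction\mathcal{F}_t$ with a strictly positive Radon--Nikodym derivative (an exponential is never zero). Consequently $\mathbb{P}_x(X_t\in B)>0$ if and only if $\mathbb{Q}_x(X_t^0\in B)>0$, so it suffices to prove the result for the free process with $\varphi=0$. Under $\mathbb{Q}_x$ the process has the explicit form $\alpha_t^0=\alpha_0+\sigma B_t\ (\bmod\ 2\pi)$ and $\xi_t^0=\xi_0+\int_0^t\tau(\alpha_s^0)\,ds$, so $(\xi^0,\alpha^0)$ is a continuous functional of the Brownian path in the uniform topology.

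Fix an open $B\subset S$, pick $(\xi^\ast,\alpha^\ast)\in B$ and $\varepsilon>0$ with the open ball $B_\varepsilon(\xi^\ast,\alpha^\ast)\subset B$. For each $t$ larger than some $T=T(x,B)$, I would construct a smooth deterministic path $\gamma\colon[0,t]\to\mathbb{R}$ with $\gamma(0)=0$ such that substituting $B_\cdot\equiv\gamma_\cdot$ in the formulas above yields $(\xi_t^0,\alpha_t^0)=(\xi^\ast,\alpha^\ast)$ exactly. The path is built in three phases: (i) over a short initial interval rotate $\alpha$ smoothly from $\alpha_0$ to the direction $\theta^\ast=\arg(\xi^\ast-\xi_0)$; (ii) hold $\alpha$ constant at $\theta^\ast$ for time exactly $|\xi^\ast-\xi_0|$, producing straight-line motion into $\xi^\ast$; (iii) rotate $\alpha$ from $\theta^\ast$ to $\alpha^\ast$ over another short interval, compensating any small residual displacement. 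Any excess time budget is absorbed by inserting a closed-loop manoeuvre in which $\alpha$ performs full rotations (full rotations contribute zero net displacement to $\xi$ since $\int_0^{2\pi}\tau(\theta)\,d\theta=0$, and by scaling the rotation speed the duration of such a loop can be made any positive real). Choosing $T$ large enough to accommodate phases (i)--(iii) and at least one loop, we can realise the construction for every $t\geq T$.

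By the Stroock--Varadhan support theorem for Wiener measure (or directly, since $\mathbb{Q}$ is non-degenerate in the $B$-variable), every uniform neighbourhood of $\gamma$ in $C([0,t];\mathbb{R})$ has positive Wiener mass; and by continuity of the solution map for the free SDE this transfers to positive $\mathbb{Q}_x$-mass for $\{X_t^0\in B_\varepsilon(\xi^\ast,\alpha^\ast)\}\subset\{X_t^0\in B\}$. The Girsanov reduction then gives the first assertion. The irreducibility of any skeleton chain $(X_{n\Delta})_{n\in\mathbb{N}}$ is an immediate corollary: given open $B$ and starting point $x$, choose any integer $n$ with $n\Delta\geq T(x,B)$; then $P_{n\Delta}(x,B)>0$. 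Combined with the density statement of Proposition \ref{p:Feller} this gives $\psi$-irreducibility with $\psi$ equivalent to Lebesgue measure, which is the form needed for the Meyn--Tweedie machinery applied later.

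The main obstacle is the controllability construction itself, specifically making the three-phase path smooth, handling the circular identification of $\alpha$, and verifying that the excess-time loop really can be inserted for any $t\geq T$ without perturbing the target. Everything else (positivity of the Radon--Nikodym derivative, continuity of the solution map at $\varphi=0$, transfer from Wiener support to diffusion support) is standard once the deterministic control is in hand.
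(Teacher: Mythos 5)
Your proposal is correct and follows essentially the same route as the paper: a Girsanov reduction to the drift-free case (where the mutual absolute continuity makes positivity statements transfer), an explicit three-phase deterministic control for the angle (rotate to the target direction, travel straight, rotate to the target angle), and the full-support property of Wiener measure combined with continuity of the solution map; the skeleton irreducibility is then deduced exactly as in the paper via Proposition \ref{p:Feller}. Your closed-loop device for absorbing excess time is a slightly more explicit version of the paper's brief remark that the path can be "extended in an appropriate way" for $t>T$, but it is not a different argument.
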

\begin{proof}
The irreducibility of skeletons follows from the previous assertions together with Proposition \ref{p:Feller}  and section 6.1.2 in \cite{MTBook}.

We first show the assertion for the case $\varphi = 0$. Let us denote by $B(\tilde{\xi},\varepsilon)$ the ball of radius $\varepsilon>0$ and center $\tilde{\xi}$. It suffices to prove that for a set $B$ of the form $B(\tilde{\xi},\varepsilon)\times (\tilde{\alpha}-\varepsilon,\tilde{\alpha}+\varepsilon)$ with $\tilde{\xi} \in \mathbb{R}^2$, $\tilde{\alpha} \in \mathbb{S}_1$ and $\varepsilon > 0$ there exists $T \geq 0$ such that for $t\geq T$ 
\begin{displaymath}
\mathbb{Q}_x\bigl(X_t \in B\bigr)>0,
\end{displaymath}
where we have set $x=(\tilde{\xi},\tilde{\alpha})$. Assume we know that there exists a continuous path $(\bar{\alpha}_s)_{s \geq 0}$ with values in $\mathbb{S}_1$ such that $\bar{\alpha}_t \in (\tilde{\alpha} - \varepsilon,\tilde{\alpha} + \varepsilon)$ and 
\begin{equation}\label{e:controlproblem}
\bar{\alpha}_t \in (\tilde{\alpha} - \varepsilon,\tilde{\alpha} + \varepsilon)\,\text{  and  }\, \biggl(\int_0^t\cos(\bar{\alpha}_s)\,ds, \int_0^t\sin(\bar{\alpha}_s)\,ds\biggr) \in B(\tilde{\xi},r)
\end{equation}
then there exists obviously $\delta>0$ such that for every path $(\alpha'_s)_{s \geq 0}$ with $\sup_s|\bar{\alpha}_s - \alpha_s'| < \delta$ we also have $\alpha'_t \in (\tilde{\alpha} - \varepsilon,\tilde{\alpha} + \varepsilon)$ as well as
\begin{displaymath}
\biggl(\int_0^t\cos(\alpha_s')\,ds, \int_0^t\sin({\alpha}_s')\,ds\biggr) \in B(\tilde{\xi},r).
\end{displaymath}
Since the support of the Wiener measure on $C([0,\infty),\mathbb{R}^2)$ coincides with $C([0,\infty),\mathbb{R}^2)$ we conclude that 
\begin{displaymath}
 \mathbb{Q}_x\bigl( X_t \in B(\tilde{\xi},r)\times (\tilde{\alpha} - \varepsilon,\tilde{\alpha} + \varepsilon)\bigr)\geq \mathbb{P}^{BM}_{\alpha_0=\tilde{\alpha}}\bigl(\lbrace\alpha' \mid \sup_{s \leq t}|\alpha'_s-\bar{\alpha}_s|\leq \delta\bigr\rbrace\bigr)>0,
\end{displaymath}
where $\mathbb{P}_{\alpha_0}^{BM}$ denotes the law of a two-dimensional Brownian motion on $C([0,\infty),\mathbb{R}^2)$, starting at $\alpha_0$. 
Thus we only have to construct $(\bar{\alpha})_s)_{s\geq 0}$ having the properties \eqref{e:controlproblem}. This can certainly be deduced from control theory but we prefer a direct hands on argument. We choose the angle $\hat{\alpha}$ such that  the vector $(\cos ( \hat{\alpha} ),\sin( \hat{\alpha} ) )^{T}$ is a positive multiple of the vector $\tilde{\xi}-\xi_0$, say $(\cos ( \hat{\alpha} ),\sin( \hat{\alpha} ) )^{T} = c(\tilde{\xi}-\xi_0)$. Let $T=1/c$ and $T>t_2>t_1>0$ and let $(\alpha^{t_1,t_2}_t)_{0\leq t\leq t_1}$ be defined by $\alpha^{t_1,t_2}_t = (1-t/t_1) {\alpha}_0 + t/t_1\hat{\alpha}$. For $t \in (t_1,t_2)$ we set $\alpha^{t_1,t_2}_t=\hat{\alpha}$ and for $t_2\leq t \leq T$ we set $\alpha^{t_1,t_2}_t = \bigl(1-\frac{t-t_2}{T-t_2}\bigr) \hat{\alpha} + \frac{t-t_2}{T-t_2}\tilde{\alpha}$. By choosing $t_1$ and $T-t_2$ sufficiently small one gets the properties \eqref{e:controlproblem} for $t=T$ and the extension to $t > T$ can be done by extending $\alpha^{t_1,t_2}$ in an appropriate way similiar to what has just been done. 

This establishes the assertion for $\varphi = 0$. The general case then follows from the Girsanov formula \eqref{e:Girsanov}.
\end{proof}
We are now using ideas from the proof of Theorem 1 of \cite{St94} in order to prove the Harris recurrence of the fiber lay down process. Recall that $(X_t)_{t \geq 0}$ is called Harris recurrent if there exists a finite measure $\gamma$ on the Borel $\sigma$-algebra over the state space such that for every Borel set $A$ with $\gamma(A)>0$ we have 
\begin{displaymath}
\mathbb{P}_x\biggl(\int_0^{\infty}\mathbf{1}_A(X_s)\,ds < \infty\biggr)=0
\end{displaymath}
for every $x \in S$. The definition of Harris recurrent for a Markov chain is analogous, the integral has to be replaced by a sum. $(X_t)_{t \geq 0}$ is Harris recurrent if and only if the skeleton chain $X^h_n=(X_{hn})_{n\in \mathbb{N}_0}$ is Harris recurrent for some $h>0$. 
\begin{proposition}\label{p:Harris}
The fiber-lay down process $(X_t)_{t \geq 0}$ is Harris recurrent.
\end{proposition}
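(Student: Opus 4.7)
The plan is to reduce the assertion to Harris recurrence of a skeleton chain $X^h = (X_{nh})_{n \in \mathbb{N}_0}$ for some fixed $h > 0$, which I would deduce from the ingredients already at our disposal: the strong Feller property of $P_h$ (Proposition \ref{p:Feller}), the topological irreducibility of every skeleton (Proposition \ref{l:irred}), and the existence of the invariant probability measure $\mu$ (Proposition \ref{p:invariantstate}).

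First I would fix $h > 0$ and view $X^h$ as a discrete-time Markov chain with transition kernel $P_h$. The combination of strong Feller and topological irreducibility implies that $X^h$ is a $T$-chain in the sense of \cite{MTBook} and that it is $\psi$-irreducible with $\psi = \mu$, since $\mu$ has full topological support. The most direct route to Harris recurrence is then to invoke Theorem 9.1.8 of \cite{MTBook}: a $\psi$-irreducible $T$-chain admitting an invariant probability measure is positive Harris recurrent, as compact sets are petite under the $T$-chain property and the invariant probability measure rules out transient behavior.

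Following more closely the argument of \cite{St94} alluded to by the authors, I would alternatively proceed as follows. By Birkhoff's ergodic theorem applied to $X^h$ with the invariant measure $\mu$, for every Borel set $A$ with $\mu(A) > 0$ and for $\mu$-a.e. $x \in S$ one has
\begin{displaymath}
\mathbb{P}_x\left(\sum_{n=0}^{\infty} \mathbf{1}_A(X_{nh}) = \infty\right) = 1.
\end{displaymath}
The central task is to promote this to \emph{every} $x \in S$. To that end, set
\begin{displaymath}
N := \left\lbrace x \in S : \mathbb{P}_x\left(\sum_{n=0}^{\infty}\mathbf{1}_A(X_{nh}) < \infty\right) > 0 \right\rbrace.
\end{displaymath}
A standard argument exploiting the strong Feller property shows that $N$ is open in $S$, while the previous step gives $\mu(N) = 0$. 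If $N$ were non-empty, Proposition \ref{l:irred} together with the Markov property and the definition of $N$ would contradict the invariance of $\mu$ (equivalently, Poincaré recurrence under $\mathbb{P}_\mu$). Hence $N = \emptyset$, yielding Harris recurrence of $X^h$, and by the skeleton equivalence recalled just before the statement, of $(X_t)_{t \geq 0}$ itself.

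The main obstacle is the topological-versus-measure-theoretic dichotomy underlying the last step. Openness of $N$ relies on the strong Feller property, while ruling out a non-empty open $\mu$-null set requires the irreducibility of Proposition \ref{l:irred} together with the fact that $\mu$ has full topological support. Once these are combined, the conclusion is essentially immediate, and this is where the preparatory propositions of Section 2.1 all come together.
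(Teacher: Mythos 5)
Your second, Stettner-style route is correct, but it is not quite the argument the paper uses, and the difference is worth spelling out. The paper works at the level of \emph{expected} occupation times: it uses the individual ergodic theorem to get Ces\`aro convergence of $\frac1n\sum_i P_{hi}(x,A)$ to $\mu(A)$ off a $\mu$-null set $N$, upgrades this to every $x$ by noting that $N$ is Lebesgue-null and $P_h(x,\cdot)$ has a density (so $P_h(x,N)=0$), concludes only that $\sum_n P_{nh}(x,A)=\infty$ (recurrence in the Meyn--Tweedie sense), and then must invoke the Harris decomposition $S=H\cup N'$ (9.1.5 in \cite{MTBook}) and kill the transient part $N'$ by the same absolute-continuity argument. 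You instead go directly for the almost-sure statement via Birkhoff, and you upgrade from $\mu$-a.e.\ $x$ to every $x$ topologically: the function $g(x)=\mathbb{P}_x(\sum_n\mathbf{1}_A(X_{nh})<\infty)$ satisfies $g=P_hg$ by the Markov property, hence is continuous by Proposition \ref{p:Feller}, so your exceptional set $N=\{g>0\}$ is open; being $\mu$-null and $\mu$ having a strictly positive density, it is empty. This bypasses the decomposition theorem entirely and is, if anything, more self-contained; the price is that you must make the openness computation explicit and you must justify ergodicity of $\mu$ (uniqueness of the invariant measure), which the paper derives from Propositions \ref{p:Feller} and \ref{l:irred} and which you use only implicitly in the Birkhoff step. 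Note also that "full topological support of $\mu$" is the fact that actually closes your contradiction; the appeal to invariance or Poincar\'e recurrence is a detour.

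Your first route should not be leaned on as stated. The assertion that "the invariant probability measure rules out transient behavior" is exactly the non-trivial point: an invariant probability measure for a $\psi$-irreducible chain yields positive recurrence, but the Harris decomposition still allows a $\psi$-null transient set of starting points, and removing it is the entire content of the proposition. The Meyn--Tweedie results that do deliver Harris recurrence for $T$-chains require an additional hypothesis such as non-evanescence or reachability of every point together with boundedness in probability, which would have to be verified here; citing a single theorem number without checking its hypotheses leaves the same gap the paper's proof is designed to fill.
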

\begin{proof}
The proof closely follows the proof of Theorem 1 in \cite{St94} and is included here for reasons of completeness. We first note, that the invariant distribution $\mu$ is ergodic in the sense, that $\mu$ is the unique invariant distribution. It follows from Proposition \ref{l:irred} that two different invariant distributions have to be mutually singular and Proposition \ref{p:Feller} implies that invariant distributions have to be absolutely continuous with respect to the Lebesgue measure on $S$. As the support of $\mu$ is the full state space the probability measure $\mu$ must be unique. Thus the individual ergodic theorem implies that  there exists a $\mu$-null set $N\subset S$ such that for any $x \in S\setminus N$ and for any Borel set $A \subset S$
\begin{displaymath}
\lim_{n \rightarrow \infty}\frac{1}{n}\sum_{i=0}^{n}{P}_{hi}(x,A)\rightarrow \mu(A).
\end{displaymath}
Since $N$ is also a Lebesgue null set we conclude by Proposition \ref{p:Feller} that $P_t(x,N)=0$ for every $t>0$ and every $x \in S$ and therefore for $x \in S$
\begin{displaymath}
n^{-1}\sum_{i=1}^{n}P_{hi}(x,A)=\int_Sn^{-1}\sum_{i=0}^{n-1}P_{hi}(y,A)P_h(x,dy)=\int_{S\setminus N}n^{-1}\sum_{i=0}^{n-1}P_{hi}(y,A)P_{h}(x,dy)\rightarrow \mu(A)
\end{displaymath}
as $n\rightarrow \infty$.
Hence
\begin{displaymath}
\lim_{n \rightarrow \infty}\frac{1}{n}\sum_{i=0}^{n}{P}_{hi}(x,A)\rightarrow \mu(A).
\end{displaymath}
for every $x \in S$ and every Borel set $A$.  Thus for any Borel set $A$ with $\mu(A)>0$ we have for every $x \in S$
\begin{displaymath}
\sum_{n=0}^{\infty}P_{nh}(x,A) = \infty, 
\end{displaymath}
showing the recurrence of the skeleton chain. Thus according to 9.1.5 in \cite{MTBook} we can decompose the state space into disjoint sets $H$ and $N$, i.e. $S=H\cup N$, such that $H$ is a maximal Harris set and $N$ is transient such that $\mu(N)=0$ and $H=\lbrace y \in S\mid \mathbb{P}_y\bigl(\kappa_H < \infty\bigr)=1\rbrace$ and $(X_{nh})_{n}$ restricted to $H$ is Harris, where $\kappa_H = \inf\lbrace n \mid X_{nh} \in H \rbrace$. Since $N$ is also a Lebesgue null set we conclude that $P_{nh}(x,N)=0$ for every $x \in S$ and $n\geq 1$. Thus, from the definition of $H$, $N$ must be empty and $(X_{hn})_{n\in \mathbb{N}}$ Harris. Therefore the skeleton chain $(X_{hn})_{n\in \mathbb{N}_0}$ is Harris recurrent and thus the same is true for $(X_t)_{t \geq 0}$. 
\end{proof}
The above results allows us to deduce the following corollary. First recall that a set $C \subset S$ is called \textit{small} for $(X_t)_{t\geq0}$, if there exists a  $T>0$ and a non-trivial measure $m$ on the Borel sets of $S$, such that 
\begin{displaymath}
\forall x \in C : P_T(x,\,\cdot\,) \geq m_T(\cdot).
\end{displaymath}
The Markov transition kernel $P_t$ is called \textit{aperiodic} if for some for some small set $C \subset S$ there exists $T>0$ such that for every $x\in C$ and $t \geq T$ one has $P_t(x,C)>0$
\begin{Corollary}\label{C:small}
All compact subsets of the state space $S$ are small sets for the Markov process $(X_t)_{t \geq 0}$ and the Markov process $(X_t)_{t \geq 0}$ is aperiodic.
\end{Corollary}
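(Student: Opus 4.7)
The corollary is a direct application of the Meyn--Tweedie framework to the ingredients already assembled: strong Feller (Proposition \ref{p:Feller}), the sharpened irreducibility ``for all $t \geq T$'' (Proposition \ref{l:irred}), and the existence of densities. The plan is to establish smallness of compact sets first, and then deduce aperiodicity as a direct byproduct of the same irreducibility input.

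For smallness, fix a compact $K \subset S$ and a relatively compact open ball $B \subset S$. Proposition \ref{l:irred} provides, for each $x \in K$, a $T_x > 0$ with $P_t(x, B) > 0$ for all $t \geq T_x$; strong Feller continuity of $y \mapsto P_t(y, B)$ spreads the positivity to an open neighbourhood of $x$, and a finite-cover compactness argument combined with the ``all $t \geq T_x$'' feature of Proposition \ref{l:irred} yields a common $T > 0$ and $\alpha > 0$ with $\inf_{x \in K} P_T(x, B) \geq \alpha$. To convert this into a measure-valued lower bound, I would compose with one more time step via Chapman--Kolmogorov,
\[
P_{T+s}(x, A) = \int_S P_s(y, A)\, P_T(x, dy) \geq \int_{\overline{B}} \Bigl( \int_A p_s(y, z)\, dz \Bigr)\, P_T(x, dy),
\]
and exploit the $L^1$-continuity of $y \mapsto p_s(y, \cdot)$ from Proposition \ref{p:Feller}. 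This makes the family $\{p_s(y, \cdot) : y \in \overline{B}\}$ compact in $L^1(S)$, and combined with the almost everywhere positivity of $p_s(y^\ast, \cdot)$ at any fixed reference point $y^\ast \in \overline{B}$ (from irreducibility and the density representation), one extracts a Borel set $E \subset S$ of positive Lebesgue measure and $c > 0$ with $p_s(y, z) \geq c$ uniformly on $\overline{B} \times E$. Plugging this back yields the minorization $P_{T+s}(x, A) \geq c \alpha \, |A \cap E|$ for all $x \in K$ and Borel $A$.

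For aperiodicity, take as small set a closed ball $C$ with non-empty interior (compact, hence small by the previous step). Proposition \ref{l:irred} applied to $\mathrm{int}(C)$ provides, for each $x \in C$, a time $T_x$ with $P_t(x, C) > 0$ for all $t \geq T_x$; strong Feller continuity of $y \mapsto P_t(y, C)$ and compactness of $C$ then produce a common $T^\ast$ with $P_t(x, C) > 0$ for all $x \in C$ and $t \geq T^\ast$, which is precisely the definition of aperiodicity. The main obstacle is the extraction step in the smallness argument: promoting the $L^1$-continuity of $y \mapsto p_s(y, \cdot)$ to a uniform pointwise lower bound on $\overline{B} \times E$ is not automatic and requires a careful Lusin--Egorov type argument around the reference density $p_s(y^\ast, \cdot)$, paired with the $L^1$-proximity control provided by strong Feller regularity, since $L^1$-continuity alone does not deliver pointwise estimates.
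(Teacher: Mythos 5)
There is a genuine gap at the decisive minorization step, and it is the one you yourself flag as ``the main obstacle'' without resolving it. The reduction via Chapman--Kolmogorov is the right idea, but the claim that one can extract a single Borel set $E$ of positive Lebesgue measure and a constant $c>0$ with $p_s(y,z)\geq c$ for \emph{all} $y\in\overline{B}$ and $z\in E$ does not follow from the $L^1$-continuity of $y\mapsto p_s(y,\cdot)$ established in Proposition \ref{p:Feller}. What $L^1$-continuity gives is this: if $p_s(y^*,\cdot)\geq 2c$ on a set $E_0$ of positive measure, then for each individual $y$ close to $y^*$ the exceptional set $\lbrace z\in E_0 : p_s(y,z)<c\rbrace$ has small Lebesgue measure --- but this exceptional set depends on $y$, and the union of these sets over the uncountable family $y\in\overline{B}$ can exhaust $E_0$ entirely. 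A Lusin--Egorov argument does not repair this: those theorems concern a single function or a countable sequence, whereas here one would need equicontinuity or joint (a.e.) continuity of $(y,z)\mapsto p_s(y,z)$, which the paper establishes only for $\varphi=0$ (via H\"ormander's theorem) and not for general $\varphi$. The missing step is precisely the content of the results the paper invokes instead: Theorem 4.1 and Proposition 4.1 of \cite{MT93a}, which rest on the T-process/petite-set machinery and, underneath that, on the Jain--Jamison/Orey theorem guaranteeing existence of small sets for irreducible chains admitting transition densities. The paper's proof is a citation of these facts; your proposal attempts to reprove them from scratch and stalls exactly where the real work lies.

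A secondary problem is the uniformity in $t$. Proposition \ref{l:irred} gives, for each $x$, positivity of $P_t(x,B)$ for all $t\geq T_x$, and strong Feller continuity spreads positivity to a neighbourhood of $x$ \emph{at each fixed time} $t$; the neighbourhood depends on $t$. A finite subcover therefore does not immediately produce a single $T$ with $\inf_{x\in K}P_T(x,B)\geq\alpha$, nor, in the aperiodicity part, a single $T^*$ with $P_t(x,C)>0$ for all $x\in C$ and all $t\geq T^*$ simultaneously: for $x$ in the neighbourhood of $x_i$ you control $P_{T_{x_i}}(x,B)$, not $P_T(x,B)$ for the larger common $T$. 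This is exactly the difficulty that petite sets (minorization by a kernel sampled with a general distribution $a(dt)$ rather than at one fixed time) are designed to circumvent. Both issues would need to be addressed for your argument to close.
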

\begin{proof}
According to Propositions \ref{p:Feller} and \ref{p:Harris}  together with Lemma \ref{l:irred} we know that $(X_t)_{t \geq 0}$ is an irreducible strong Feller process. Thus according to Theorem 4.1 in \cite{MT93a} compact sets are petite sets and according to Proposition 4.1 in \cite{MT93a} petite sets are small for $(X_t)_{t \geq 0}$. The second assertion now follows from the fact that open sets are small and Proposition \ref{l:irred}.
\end{proof}
\subsection{Strong Mixing Properties}
We now move on and prove ergodicity of the fiber lay down process, more precisely we show that it is strongly mixing in the sense that for any initial distribution $\nu$ on $S$ 
\begin{equation}\label{e:mixI}
\lim_{t \rightarrow \infty}\bigl\| \mathbb{P}_{\nu}\bigl(X_t \in \,\cdot\, \bigr)-\mu \bigr\|_{TV}=0 
\end{equation}
as well as 
\begin{equation}\label{e:mixII}
\lim_{t \rightarrow \infty}\biggr\| \mathbb{E}_{(\xi_0,\alpha_0)}\bigl[ f(\xi_t,\alpha_t) \bigr]-\int_Sf(x)\,\mu(dx) \biggr\|_{L^2(\mu)}=0
\end{equation}
for every $f \in L^2(\mu)$. Observe that on the one hand these assertions are stronger than the one in \eqref{e:GKresult}, since they do not require to look at time averages and only rely on very weak assumptions concerning $\varphi$. On the other hand they are much weaker than  \eqref{e:GKresult} as the do not provide any information concerning the rate of convergence. 
\begin{theorem}\label{t:mixing}
Assume that $\varphi$ satisfies the conditions of Proposition \ref{p:Feller}. Then the assertions \eqref{e:mixI} and \eqref{e:mixII} hold true. 
\end{theorem}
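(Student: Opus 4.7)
The plan is to reduce Theorem \ref{t:mixing} to the standard Meyn--Tweedie convergence theory for positive Harris recurrent Markov processes: the necessary structural input has already been collected in Propositions \ref{p:Feller}, \ref{p:invariantstate}, \ref{l:irred}, \ref{p:Harris} and Corollary \ref{C:small}. Taken together these results show that $(X_t)_{t\geq 0}$ is a strong Feller, $\mu$-irreducible, Harris recurrent Markov process possessing the finite invariant measure $\mu$, with every compact set small and the skeleton chains aperiodic. Thus the hard analytic and probabilistic work is already behind us and what remains is to extract the mixing statements.

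First I would deduce the pointwise total variation convergence
\begin{displaymath}
\lim_{t \to \infty} \bigl\| P_t(x,\,\cdot\,) - \mu \bigr\|_{TV} = 0 \qquad \text{for every } x \in S.
\end{displaymath}
This is precisely the conclusion of the continuous-time ergodic theorem for positive Harris recurrent, aperiodic Markov processes (see e.g.\ Theorem 6.1 of \cite{MT93} together with Chapter 13 of \cite{MTBook}), whose hypotheses -- Harris recurrence, existence of small sets, aperiodicity and a finite invariant measure -- are exactly what has just been verified. Assertion \eqref{e:mixI} for an arbitrary initial distribution $\nu$ follows at once from
\begin{displaymath}
\bigl\| \mathbb{P}_{\nu}(X_t \in \,\cdot\,) - \mu \bigr\|_{TV} \leq \int_S \bigl\| P_t(x,\,\cdot\,) - \mu \bigr\|_{TV}\,\nu(dx)
\end{displaymath}
combined with the dominated convergence theorem, since the integrand is bounded by $2$.

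Second, for \eqref{e:mixII} I would first treat bounded measurable $f$. The elementary pointwise estimate
\begin{displaymath}
\bigl| P_t f(x) - \mathbb{E}_{\mu}[f] \bigr| \leq \|f\|_{\infty}\,\bigl\| P_t(x,\,\cdot\,) - \mu \bigr\|_{TV}
\end{displaymath}
together with the dominating constant $2\|f\|_{\infty}$ and the finiteness of $\mu$ allows an appeal to dominated convergence on $(S,\mu)$, yielding $\|P_t f - \mathbb{E}_{\mu}[f]\|_{L^2(\mu)} \to 0$. To extend this to general $f \in L^2(\mu)$ I would use that $P_t$ is an $L^2(\mu)$-contraction -- a direct consequence of Jensen's inequality combined with the invariance of $\mu$. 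Approximating $f$ in $L^2(\mu)$ by a bounded function $g$ and applying the triangle inequality
\begin{displaymath}
\bigl\| P_t f - \mathbb{E}_{\mu}[f]\bigr\|_{L^2(\mu)} \leq \bigl\|P_t(f-g)\bigr\|_{L^2(\mu)} + \bigl\|P_t g - \mathbb{E}_{\mu}[g]\bigr\|_{L^2(\mu)} + \bigl| \mathbb{E}_{\mu}[g-f]\bigr|
\end{displaymath}
makes the first and third terms arbitrarily small uniformly in $t$, while the middle term vanishes as $t \to \infty$ by the bounded case.

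There is no genuinely new obstacle here; the main effort is bookkeeping, checking that the strong Feller property, irreducibility, Harris recurrence and aperiodicity established earlier line up exactly with the continuous-time hypotheses of the Meyn--Tweedie total variation ergodic theorem. The only point requiring modest care is the density argument extending \eqref{e:mixII} from bounded to $L^2(\mu)$, which as outlined above is handled by the contraction property of $P_t$ on $L^2(\mu)$.
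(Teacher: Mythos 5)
Your proof is correct, and it is worth separating the two assertions when comparing it with the paper. For \eqref{e:mixI} the paper argues via Theorem 1 of \cite{St94}, obtaining total variation convergence of a skeleton chain from the Harris recurrence established in Proposition \ref{p:Harris}, and then transfers this to continuous time via Theorem 3.7 of \cite{Rev}; you instead invoke the Meyn--Tweedie continuous-time ergodic theorem for positive Harris processes with irreducible skeletons. These are essentially two packagings of the same Harris-recurrence ergodic theory and rest on exactly the structural facts collected in Propositions \ref{p:Feller}--\ref{p:Harris} and Corollary \ref{C:small}; note only that the relevant item in the paper's bibliography is \cite{MT93a} (Part II, on sampled chains and continuous-time convergence) rather than \cite{MT93}. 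The genuine divergence is in \eqref{e:mixII}: the paper derives it \emph{independently} of \eqref{e:mixI}, from an operator-theoretic version of Doob's theorem (Korollar 3.11 of \cite{Gre82}, see also \cite{GN11}), using only that each $P_t$ is an integral operator with non-negative kernel as provided by Proposition \ref{p:Feller}; you instead deduce it \emph{from} \eqref{e:mixI}, by dominated convergence for bounded $f$ (using that $\|P_t(x,\cdot)-\mu\|_{TV}$ is bounded and tends to zero pointwise) followed by a density argument based on the $L^2(\mu)$-contractivity of $P_t$, which follows from Jensen's inequality and the invariance of $\mu$. Your route is more elementary and self-contained, avoiding the positive-semigroup machinery entirely, at the cost of being logically downstream of the total variation statement; the paper's route yields the $L^p$ convergence for all $1\leq p<\infty$ in one stroke directly from the kernel representation. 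Both arguments are sound.
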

\begin{proof}
As is well-known strong mixing will be a rather direct consequence of Doob's theorem once irreducibility is established. More precisely, assertion \eqref{e:mixI} follows e.g. from Theorem 1 of \cite{St94}, where one has to observe that the assumption 'for a $h>0$ the transition measures $P_h(x,\cdot )$ are for $x \in S$ equivalent' is only used in order to  prove Harris recurrence for the skeleton chain $(X_{nh})_{n\in \mathbb{N}_0}$. For the fiber-lay down process $(X_t)_{t \geq 0}$ this has been shown in Proposition \ref{p:Harris}.  Following the proof of Theorem 1 of \cite{St94} we can conclude that the skeleton chain  $(X_{nh})_{n\in \mathbb{N}_0}$ converges as $n\rightarrow $ in total variation to $\mu$ and according to Theorem 3.7 in \cite{Rev} this implies that 
\begin{displaymath}
\lim_{t \rightarrow \infty}\bigl\|\mathbb{P}_x\bigl(X_t\in \,\cdot \,)-\mu\bigr\|_{TV}=0.
\end{displaymath}
for ever $x \in S$. 
In order to prove assertion \ref{e:mixII} we rely on Korollar 3.11 in \cite{Gre82} (see also Theorem 4.5 in \cite{GN11}). Since according to Propostion \ref{p:Feller} we know that for every $t >0$ the transition kernels $P_t$ consist of integral operators having a non-negative kernel $p_t(\cdot,\cdot)$. Thus Korollar 3.11 implies that for any $f \in L^p(\mu)$ ($1\leq p<\infty$)
\begin{displaymath}
\lim_{t \rightarrow \infty}\biggl\|P_tf-\int_Sf\,d\mu\biggr\|_{L^p(\mu)}=0,
\end{displaymath}
completing the proof of the theorem. 
\end{proof}

\section{Geometric Ergodicity}\label{s:geomrate} 
Until now it seems to be unknown, whether the rate of convergence towards the invariant distribution is geometric. This is at least strongly suggested by numerical experiments presented in \cite{GK} for the cases $\varphi(\xi) = |\xi|^2$ and $\varphi(\xi)=|\xi|$. In this section we will investigate the question of geometric ergodicity.  From the theory of Markov chains it is known that geometric ergodicity is strongly connected to exponential integrability of hitting times of compact sets. Indeed, the existence of a Lyapunov function -- also called geometric drift conditions -- are essentially equivalent to the exponential integrability of hitting times of a compact set, which is a small set by Corollary \ref{C:small} (see \cite{Wu04} and chapter 15 in \cite{MTBook}).  Since the angle component $\alpha_t$ of the process moves in a compact state space the problem consists in showing, that the $\xi_t$-component returns to compact sets fast enough.  Considering the first hitting time of a ball in $\mathbb{R}^2$ of radius $R$ around $0$ it is tempting to look at the process $(|\xi_t|)_{t \geq 0}$. Assuming now that $\varphi$ is spherically symmetric it is convenient to write the SDE \eqref{i:SDE} in polar coordinates, i.e. we set $\xi=(r \cos(\psi),r\sin(\psi))$ and $\beta=\alpha - \psi$. According to  formula (13) in \cite{KMW} (see also p. 5 in \cite{GKMW07}) this leads to the following system of stochastic differential equations
\begin{equation}\label{e:radialSDE}
\begin{split}
d r_t&= \cos(\beta_t)\,dt \\
d\beta_t &= \bigl(b(r_t) - \frac{1}{r_t}\bigr)\sin(\beta_t)\,dt + \sigma \,dB_t\\
d\psi_t &= \frac{\sin(\beta_t)}{r_t},
\end{split}
\end{equation}
where according to \cite{KMW} $b(r) = \varphi'(r)$. Equation \eqref{e:radialSDE} holds in the region, where $\varphi$ is radial.

This system has the advantage that the equation for $(\psi_t)_{t \geq 0}$ is decoupled from the other $(r_t,\beta_t)_{t \geq 0}$-process and that the equation for the radial part looks quite handy. Let us assume that the following assumption holds:
\begin{itemize}
\item[\textbf{A)}]There exists $R$ and a constant $c>0$ such that $b(r) - \frac{1}{r}\geq c$ for every $r \geq R$. 
\end{itemize}
This assumption is needed in order to make use of comparison arguments with the following SDE:
\begin{equation}\label{e:radialSDEcomp}
\begin{split}
d s_t&= \cos(\delta_t)\,dt \\
d\delta_t &= c\sin(\delta_t)\,dt + \sigma \,dB_t
\end{split}
\end{equation}
At this point one has to note, that we are interested in the first hitting time of the process $(\beta_t,r_t)_{t \geq 0}$ of compact sets of the form $\mathbb{S}_1 \times [0,R]$ and therefore the singular behavior near $0$ will be of no concern here.

Obviously it is not necessary for us to assume that $\varphi$ is radially symmetric on the full $\mathbb{R}^2$. Let us thus weaken this condition slightly.  We call a potential $\varphi$ \textit{eventually radial}, if there exists a ball $B(0,l)$ with radius $l \geq 0$ and centre $0$, such that $\varphi \restriction \mathbb{R}^2 \setminus B(0,l)$ is radially symmetric. 
\begin{proposition}\label{p:expintegrability}
Assume that $\varphi \in C^2(\mathbb{R}^2) $ is eventually radial and assume that Assumption A) is satisfied. Then there exists $\lambda > 0$ such that for every $(r_0,\beta_0) \in (0,\infty)\times [0,2\pi)$
\begin{displaymath}
\mathbb{E}_{(r_0,\beta_0)}\bigl[ e^{\lambda \tau_R} \bigr] < \infty,
\end{displaymath}
i.e. the random variable $\tau_R$ has exponential moments of order $\lambda>0$. 
\end{proposition}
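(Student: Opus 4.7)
My plan is a standard Foster--Lyapunov construction: find a smooth function $W : (0,\infty)\times\mathbb{S}_1 \to (0,\infty)$, bounded away from zero on $\{r=R\}$, and a constant $\mu > 0$ such that
$$\widetilde L W \leq -\mu W \qquad\text{on } \{r \geq R\},$$
where $\widetilde L = \cos(\beta)\partial_r + (b(r)-1/r)\sin(\beta)\partial_\beta + \tfrac{\sigma^2}{2}\partial_\beta^2$ is the generator of the polar process \eqref{e:radialSDE} (valid in the region where $\varphi$ is radial, which we may assume contains $\{r\geq R\}$ after possibly enlarging $R$). Dynkin's formula applied to $e^{\mu(t\wedge\tau_R)}W(X_{t\wedge\tau_R})$, combined with monotone convergence as $t\to\infty$, then yields $\mathbb{E}_{(r_0,\beta_0)}\bigl[e^{\mu\tau_R}\bigr] \leq W(r_0,\beta_0)/\inf_{r=R}W < \infty$.

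To exploit the lower bound $b(r)-1/r\geq c$, I propose the ansatz $W(r,\beta) = e^{\lambda r}h(\beta)$ with $\lambda > 0$ small and a strictly positive smooth $h : \mathbb{S}_1 \to (0,\infty)$ to be determined. Direct differentiation gives
$$\widetilde L W = e^{\lambda r}\bigl[\lambda\cos(\beta)h(\beta) + (b(r)-1/r)\sin(\beta)h'(\beta) + \tfrac{\sigma^2}{2}h''(\beta)\bigr].$$
If $h$ is chosen so that $\sin(\beta)h'(\beta)\leq 0$ everywhere on $\mathbb{S}_1$, then on $\{r\geq R\}$ the factor $(b(r)-1/r)$ may be replaced by its lower bound $c$, reducing the task to exhibiting a positive $h$ satisfying $\mathcal{L}_\lambda h \leq -\mu h$, where
$$\mathcal{L}_\lambda := \tfrac{\sigma^2}{2}\partial_\beta^2 + c\sin(\beta)\partial_\beta + \lambda\cos(\beta)$$
is the tilted elliptic operator on the circle naturally associated with the comparison SDE \eqref{e:radialSDEcomp}.

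I would take $h = h_\lambda$ to be the principal eigenfunction of $\mathcal{L}_\lambda$. This operator is self-adjoint on $L^2(\mathbb{S}_1,\pi_c)$ with $\pi_c(d\beta)\propto e^{-(2c/\sigma^2)\cos(\beta)}\,d\beta$ (the invariant measure of the $\delta$-diffusion appearing in \eqref{e:radialSDEcomp}), and its Feynman--Kac semigroup is positivity-improving on the compact circle. Hence its top eigenvalue $\Lambda(\lambda)$ is simple and admits a strictly positive eigenfunction $h_\lambda$. Because $\Lambda(0) = 0$ with $h_0\equiv 1$, Kato's analytic perturbation theory yields
$$\Lambda'(0) = \int_{\mathbb{S}_1}\cos(\beta)\,d\pi_c(\beta) = -\,\frac{I_1(2c/\sigma^2)}{I_0(2c/\sigma^2)} < 0,$$
so $\Lambda(\lambda) = -\mu < 0$ for all sufficiently small $\lambda > 0$; this negative slope supplies the decay rate.

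The main technical obstacle is the verification of the sign $\sin(\beta)h_\lambda'(\beta)\leq 0$ on the full circle. I would exploit the reflection symmetry $\beta\mapsto -\beta$ of $\mathcal{L}_\lambda$: by simplicity of $\Lambda(\lambda)$ the eigenfunction $h_\lambda$ must be even, so $h_\lambda'(0) = h_\lambda'(\pi) = 0$, and it remains to show that $h_\lambda$ is monotone on $[0,\pi]$. This can be established either by a maximum-principle argument on the logarithmic derivative $g_\lambda := h_\lambda'/h_\lambda$, which satisfies a Riccati-type ODE on the interval, or, for $\lambda > 0$ sufficiently small, through the first-order expansion $h_\lambda = 1 + \lambda h_1 + O(\lambda^2)$ with $h_1$ the even solution of the centred Poisson equation $\mathcal{L}_0 h_1 = -\cos(\beta) + I_1(2c/\sigma^2)/I_0(2c/\sigma^2)$, whose monotonicity on $[0,\pi]$ can be read off after a single integration against the explicit weight $d\pi_c$. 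Once this sign is settled, the eigenfunction identity combined with the inequalities above produces $\widetilde L W \leq -\mu W$ on $\{r\geq R\}$, and the Foster--Lyapunov/Dynkin argument completes the proof with exponential moment exponent $\mu$.
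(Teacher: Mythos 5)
Your strategy is sound and genuinely different from the paper's. The paper argues pathwise: it decomposes the angular trajectory into i.i.d.\ excursion cycles (returns of $\tilde{\beta}$ to $\pi$ after visiting $2\pi$), uses the It\^o--Tanaka comparisons of Lemmas \ref{l:tanaka} and \ref{l:tanaka2} plus the Girsanov computation of Lemma \ref{l:hilfslemma} to show that the cycle increments $X_i=\int_{\sigma^{i-1}}^{\sigma^i}\cos(\tilde{\beta}_s)\,ds$ have negative mean and exponential moments, imports Heyde's first--passage theorem for random walks (Lemmas \ref{l:exponRW}, \ref{l:exponsimple}), and finally reduces the general drift $b(r)-1/r\geq c$ to the special case $b(r)=c+1/r$ by an explicit coupling with the auxiliary process $\gamma$. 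Your Foster--Lyapunov route compresses all of this into one spectral computation on the circle: once $W=e^{\lambda r}h_\lambda(\beta)$ satisfies $\widetilde{L}W\leq \Lambda(\lambda)W$ on $\{r\geq R\}$, the elementary inequality $(b(r)-1/r)\sin(\beta)h_\lambda'(\beta)\leq c\sin(\beta)h_\lambda'(\beta)$ absorbs the general drift with no coupling at all, the exponent $\mu=-\Lambda(\lambda)$ is quantitative, and $W$ feeds directly into the Down--Meyn--Tweedie drift conditions invoked in the proof of Theorem \ref{t:main} (so one could even bypass the intermediate function $V_R$ of \eqref{e:Lfunction}). This is a real simplification of the paper's argument.

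The step you correctly isolate as the crux, $\sin(\beta)h_\lambda'(\beta)\leq 0$, is true, but neither of your two sketched verifications is quite right as written. In the perturbative route the Poisson equation must read $\mathcal{L}_0h_1=\Lambda'(0)-\cos(\beta)=-\cos(\beta)-I_1(2c/\sigma^2)/I_0(2c/\sigma^2)$; with your sign the right-hand side does not integrate to zero against $\pi_c$ and the equation is not solvable. Moreover a first-order expansion only controls $h_\lambda'$ up to $O(\lambda^2)$ errors, which must be handled uniformly near $\beta=0,\pi$ where $h_1'$ vanishes. The clean repair is to integrate the exact eigenvalue equation in Sturm--Liouville form: with $\rho(\beta)=e^{-(2c/\sigma^2)\cos\beta}$ one has $\tfrac{\sigma^2}{2}\bigl(\rho h_\lambda'\bigr)'=(\Lambda(\lambda)-\lambda\cos\beta)h_\lambda\rho$, and integrating from $\pi$ to $x\in[\pi,2\pi]$ using $h_\lambda'(\pi)=0$ gives
\begin{displaymath}
\tfrac{\sigma^2}{2}\rho(x)h_\lambda'(x)=\int_\pi^x\bigl(\Lambda(\lambda)-\lambda\cos\beta\bigr)h_\lambda(\beta)\rho(\beta)\,d\beta .
\end{displaymath}
Since $-\lambda<\Lambda(\lambda)<0$ for small $\lambda$ (the lower bound from the variational principle, the upper bound from $\Lambda'(0)<0$), the integrand is positive near $\pi$, changes sign exactly once on $(\pi,2\pi)$, and its integral over $[\pi,2\pi]$ vanishes because $h_\lambda'(2\pi)=0$; hence the running integral is nonnegative on $[\pi,2\pi]$, i.e.\ $h_\lambda'\geq0$ there, and evenness gives $h_\lambda'\leq0$ on $[0,\pi]$. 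This is exactly $\sin(\beta)h_\lambda'(\beta)\leq0$, and with it your argument closes.
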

Let us for a moment assume that Proposition \ref{p:expintegrability} holds true and let us see, how this implies the following theorem, which constitutes the main result of this paper.  We formulate the theorem for the case of a gradient drift $\nabla \varphi$ in equation \eqref{i:SDE} but we explain in Remark \ref{r:final} that the gradient form of the drift is not important for the following theorem.
\begin{theorem}\label{t:main}
Assume that  $\varphi \in C^2(\mathbb{R}^2)$ is eventually radial and assume that Assumption A) is satisfied. Then there exists a constant $v > 0$ and a strictly positive function $M:S\rightarrow (0,\infty)$ such that 
\begin{displaymath}
\forall t > 0: \bigl\| \mathbb{P}_{(\xi_0,\alpha_0)}\bigl( (\xi_t,\alpha_t\bigr) \in \cdot \,\bigr) - \mu \bigr\|_{TV}\leq M(r_0,\beta_0) e^{-v t}
\end{displaymath}
\end{theorem}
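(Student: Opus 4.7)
The plan is to invoke the continuous-time geometric ergodicity theorem of Meyn and Tweedie (Chapter~15 of \cite{MTBook}), which reduces the statement to verifying two ingredients: (i) $\psi$-irreducibility together with aperiodicity and compact petite sets, and (ii) a Foster--Lyapunov geometric drift condition $P_{t_0} V \leq \rho V + b \mathbf{1}_C$ for some measurable $V \geq 1$, some small set $C$, some $t_0 > 0$, $\rho < 1$ and $b < \infty$. The ingredients for (i) are already established: Proposition~\ref{p:Girsanov} yields non-explosion, Propositions~\ref{l:irred} and~\ref{p:Harris} give $\psi$-irreducibility and Harris recurrence, and Corollary~\ref{C:small} asserts that every compact set is small and that the process is aperiodic. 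The entire remaining work therefore lies in producing the Lyapunov function.

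The natural candidate, given Proposition~\ref{p:expintegrability}, is the Laplace transform of the hitting time
\begin{equation*}
V(x) := \mathbb{E}_x\bigl[ e^{\lambda \tau_C} \bigr],
\end{equation*}
with $C = \overline{B(0,R)} \times \mathbb{S}_1$ and $R$ chosen large enough that $\overline{B(0,l)} \subset \overline{B(0,R)}$ (so that outside $C$ the radial SDE~\eqref{e:radialSDE} is in force) and that Assumption~A) applies on $S \setminus C$. By Proposition~\ref{p:expintegrability}, translated back to Cartesian coordinates, $V$ is finite everywhere and satisfies $V \geq 1$. A standard strong-Markov calculation then yields $P_t V(x) \leq e^{-\lambda t} V(x)$ for $x \in S \setminus C$, while $V$ is uniformly bounded on $C$ by the continuity properties furnished by Proposition~\ref{p:Feller} combined with a Girsanov comparison against the bounded-drift setting used in Proposition~\ref{p:Girsanov}. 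This gives the required geometric drift condition for the $h$-skeleton at any $h > 0$, and the function $M$ appearing in the statement of the theorem is then a constant multiple of $V$ evaluated at the starting point.

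The final step is to lift the geometric convergence of the $h$-skeleton to continuous time; this proceeds exactly as in the proof of Theorem~\ref{t:mixing}, by means of the strong Feller property (Proposition~\ref{p:Feller}) and a Revuz-type argument. The main obstacle of the whole theorem is already isolated in Proposition~\ref{p:expintegrability}, which carries all of the genuine stochastic-analysis content; once that proposition is granted, what remains is essentially routine Meyn--Tweedie bookkeeping. One subtle point to keep in mind is that the singular coefficient $-1/r$ in~\eqref{e:radialSDE} near the origin is harmless in the present argument, because the drift inequality only needs to hold outside the compact set $C$, where the radial reduction is valid and the assumption $b(r) - 1/r \geq c$ can be used; inside $C$ only boundedness of $V$ is needed, and that follows by compactness.
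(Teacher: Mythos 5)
Your proposal follows essentially the same route as the paper: both reduce the theorem to the Down--Meyn--Tweedie geometric drift criterion, using Corollary~\ref{C:small} for smallness and aperiodicity of $\overline{B(0,R)}\times\mathbb{S}_1$ and a Lyapunov function built from the exponential moment of the return time supplied by Proposition~\ref{p:expintegrability}, which indeed carries all the real content. The only (minor) difference is bookkeeping: the paper takes $V_R(x)=1+\mathbb{E}_x\bigl[\int_0^{\tau_R(\delta)}e^{\lambda t}\,dt\bigr]$ with a $\delta$-delayed hitting time and invokes Theorem~6.2 of \cite{DMT95} to obtain the drift inequality $P_sV_R\leq\lambda(s)V_R+b\,\mathbf{1}_C$ directly, which is the clean way to justify your claimed bound $P_tV\leq e^{-\lambda t}V$ off $C$ together with boundedness of $P_hV$ on $C$ --- the bare strong Markov computation leaves an uncontrolled term $\mathbb{E}_x[V(X_t);\tau_C\leq t]$ that the delayed-hitting-time formulation absorbs.
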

\begin{proof}
We want to use the results from \cite{DMT95}. In order to do so, we use the fact that according to Corollary \ref{C:small} the compact set $ \overline{B(0,R)}\times \mathbb{S}_1$ is small.\\
\\
Furthermore, according to Proposition \ref{p:expintegrability} we know that a function $V_R$ can be defined by 
\begin{equation}\label{e:Lfunction}
V_R(\xi,\alpha)=
1 + \mathbb{E}_{x=(\xi,\alpha)}\biggl[\int_0^{\tau_R(\delta)}e^{\lambda t}\,dt\biggr] 
\end{equation}
where the $\delta,\eta>0$ are some constants and $\tau_R(\delta) = \inf \lbrace t>\delta \mid X_t \in B(0,R)\times \mathbb{S}_1\rbrace$. Observe that at time $\delta>0$ the $\xi$-component of the process travelled at most a distance $\delta$, i.e. if $\xi_0=\xi$ then $|\xi_{\delta}-\xi|\leq \delta$ and therefore existence of an exponential moment for the random variable $\tau_{R}$ implies the existence of an exponential moment of the random variable $\tau_{R}(\delta)$. This shows that $V_R$ is indeed well-defined.  
Then $V_R$ is locally bounded and therefore by Theorem 6.2 in \cite{DMT95} the drift conditions needed in Theorem 5.2 in \cite{DMT95} are satisfied. Therefore we can apply Theorem 5.2 in \cite{DMT95} in order to deduce geometric ergodicity. 
\end{proof}
The proof actually also allows to conclude the following slightly stronger statement. First, let us recall the definition of the weighted supremum spaces $\mathcal{B}_{V_R}(S)$,
\begin{displaymath}
\mathcal{B}_{V_R}(S) := \biggl\lbrace f : S \rightarrow \mathbb{R} \mid \sup_{x \in S}\frac{|f(x)|}{V_R(x)}<\infty\biggr\rbrace 
\end{displaymath}
equipped with the norm $\|\cdot\|_{V_R}=\sup_{x \in S}\frac{|f(x)|}{V_R(x)}$.
\begin{Corollary}
Under the conditions of Theorem \ref {t:main} the transitions semigroup $(P_t)_{t \geq 0}$ admits a spectral gap in the sense that for some constants $C>0$ and $v>0$
\begin{displaymath}
\bigl\| P_t-\mathbf{1}\otimes \mu \bigr\|_{{V_R}}\leq C \, e^{-v t},
\end{displaymath}
where $\mathbf{1}\otimes \mu$ is the projection on the constant functions defined by $\mathbf{1}\otimes \mu(f)=\int f\,d\mu\cdot \mathbf{1}$.
\end{Corollary}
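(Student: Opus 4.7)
The plan is to read off the stronger $V_R$-norm bound directly from the very same application of the Down--Meyn--Tweedie machinery that was used in Theorem \ref{t:main}. Recall that in that proof we verified that the compact set $\overline{B(0,R)}\times \mathbb{S}_1$ is small (Corollary \ref{C:small}) and that the function $V_R$ defined in \eqref{e:Lfunction} satisfies a geometric drift condition in the sense of Theorem 6.2 of \cite{DMT95}. Under these two hypotheses, Theorem 5.2 together with Theorem 6.1 of \cite{DMT95} delivers not merely convergence in total variation, but the full $V$-uniform ergodic theorem
\[
\sup_{|f|\leq V_R}\bigl| P_t f(x) - \mu(f) \bigr|\leq C\,V_R(x)\,e^{-vt}
\]
for every $x\in S$ and some finite constants $C,v>0$. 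Dividing by $V_R(x)$ and taking the supremum over $x\in S$ and over the unit ball of $\mathcal{B}_{V_R}(S)$ then gives exactly the operator-norm estimate claimed.

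Before invoking the abstract theorem I would check that the projection $\mathbf{1}\otimes \mu$ actually acts on $\mathcal{B}_{V_R}(S)$, which amounts to verifying $\mu(V_R)<\infty$. This follows in the standard way by integrating the geometric drift inequality against the invariant measure $\mu$: if $L_\varphi V_R \leq -\lambda V_R + b\,\mathbf{1}_K$ for a compact set $K$, then $0 \leq -\lambda\,\mu(V_R) + b\,\mu(K)$, so $\mu(V_R) \leq b\,\mu(K)/\lambda < \infty$. For $f\in \mathcal{B}_{V_R}(S)$ with $\|f\|_{V_R}\leq 1$ one has $|f|\leq V_R$, so the displayed $V$-uniform bound yields
\[
|P_t f(x)-\mu(f)|\leq C\,V_R(x)\,e^{-vt}
\]
uniformly in $x$, which after normalisation by $V_R(x)$ gives the required inequality.

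I do not anticipate a genuine obstacle beyond the verifications already carried out. The small-set and drift conditions are precisely those established in Corollary \ref{C:small} and in the proof of Theorem \ref{t:main}. The mildest point of care is that the $V$-uniform ergodic theorem of \cite{DMT95} for continuous time requires the drift condition to be formulated at the level of the infinitesimal generator rather than at the skeleton level; this is handled by Theorem 6.1 of \cite{DMT95}, whose hypotheses are met thanks to the strong Feller and irreducibility properties from Propositions \ref{p:Feller} and \ref{l:irred}. In this sense the Corollary is a sharpening of Theorem \ref{t:main} rather than a genuinely new assertion, and the work lies entirely in recognising that the proof of Theorem \ref{t:main} already established the stronger bound.
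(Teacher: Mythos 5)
Your proposal is correct and follows essentially the same route as the paper, whose proof of this corollary consists of the single remark that the statement is shown in \cite{DMT95} to be a consequence of Theorem \ref{t:main}; you simply spell out the details (the $V$-uniform ergodic theorem of Down--Meyn--Tweedie applied with the small set from Corollary \ref{C:small} and the drift function $V_R$ from \eqref{e:Lfunction}, plus the standard verification that $\mu(V_R)<\infty$) that the authors leave implicit. No gap.
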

\begin{proof}
This is shown in \cite{DMT95} to be a consequence of Theorem \ref{t:main}.
\end{proof}
In the functional analytic language of \cite{Wu04} and \cite{H} this can be formulated in the following way
\begin{Corollary}
Under the conditions of Theorem \ref{t:main} the operator semigroup $(P_t)_{t \geq 0}$ is quasi--compact in the Banach space $B_{V_R}(S)$.
\end{Corollary}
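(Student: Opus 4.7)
The plan is to derive quasi-compactness directly from the spectral gap statement of the preceding Corollary, using the classical decomposition that defines quasi-compactness. Recall that a bounded operator $T$ on a Banach space is called quasi-compact if there is a decomposition $T = K + R$ with $K$ compact and $R$ of operator norm strictly less than the spectral radius of $T$; equivalently, the essential spectral radius of $T$ is strictly smaller than its spectral radius. For the Markov semigroup $(P_t)_{t\geq 0}$ acting on $\mathcal{B}_{V_R}(S)$ we plainly have spectral radius equal to one (since $P_t\mathbf{1}=\mathbf{1}$ and $P_t$ is a contraction on bounded measurable functions), so it suffices to isolate a compact piece and control the remainder.

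First I would observe that the projection $K := \mathbf{1}\otimes \mu$, defined by $Kf = (\int_S f\,d\mu)\cdot \mathbf{1}$, is a bounded rank-one operator on $\mathcal{B}_{V_R}(S)$. Boundedness follows from $\int V_R\,d\mu < \infty$, which itself is a consequence of the geometric drift inequality $L_\varphi V_R \leq -v V_R + b\mathbf{1}_C$ underlying Theorem \ref{t:main}, integrated against the invariant measure $\mu$. Being finite-rank, $K$ is automatically compact on $\mathcal{B}_{V_R}(S)$.

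Next I would invoke the preceding Corollary: for some $C>0$ and $v>0$,
\begin{displaymath}
\bigl\| P_t - K\bigr\|_{V_R} \leq C\,e^{-vt} \quad\text{for all } t\geq 0.
\end{displaymath}
Fix any $t_0>0$ with $C e^{-v t_0}<1$. Writing $P_{t_0} = K + R_{t_0}$ with $R_{t_0}:=P_{t_0}-K$, we obtain a decomposition of $P_{t_0}$ into a compact operator plus a remainder of operator norm strictly less than $1$. By the standard characterisation of the essential spectral radius (e.g.\ Nussbaum's formula, or directly the fact that compact perturbations do not change it), this implies that the essential spectral radius of $P_{t_0}$ is at most $C\,e^{-vt_0}<1$, whereas the spectral radius equals $1$ because $P_{t_0}\mathbf{1}=\mathbf{1}$. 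Hence $P_{t_0}$ is quasi-compact on $\mathcal{B}_{V_R}(S)$. Quasi-compactness of the continuous-time semigroup $(P_t)_{t\geq 0}$ then follows from quasi-compactness of one of its time-$t_0$ elements, using the semigroup relation $P_{t} = P_{t-t_0} P_{t_0}$ for $t\geq t_0$ together with boundedness of $P_{t-t_0}$ on $\mathcal{B}_{V_R}(S)$ (which in turn is a routine consequence of the drift inequality for $V_R$).

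The step that requires the most care is verifying that the ingredients used in the standard Hennion/Wu framework are available in the present weighted setting: namely, that $P_t$ maps $\mathcal{B}_{V_R}(S)$ boundedly into itself and that $V_R$ is $\mu$-integrable. Both reduce to the Lyapunov/drift inequality for $V_R$ already established in the proof of Theorem \ref{t:main} and to the explicit definition \eqref{e:Lfunction} of $V_R$, so no new analytic work is needed; the quasi-compactness is really a clean functional-analytic repackaging of the $V_R$-norm spectral gap.
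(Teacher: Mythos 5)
Your argument is correct and is precisely the intended one: the paper states this corollary without proof, presenting it as the functional-analytic reformulation of the preceding spectral-gap corollary, and your decomposition $P_{t_0}=\mathbf{1}\otimes\mu + R_{t_0}$ with the rank-one (hence compact) projection and $\|R_{t_0}\|_{V_R}<1=r(P_{t_0})$ is exactly the standard way to make that reformulation precise. The supporting details you flag (boundedness of $P_t$ on $\mathcal{B}_{V_R}(S)$ and $\mu(V_R)<\infty$ via the drift inequality) are indeed the only points needing verification, and they follow as you say.
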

Thus it remains to prove Proposition \ref{p:expintegrability}. This will be the content of the next subsection. Before we start we have to emphasize once more that in this work we will be interested in the first hitting time of the process $(\beta_t,r_t)_{t\geq 0}$ of sets of the form $\mathbb{S}_1 \times [0,R]$, where $R>0$ is chosen large enough such that $b(r)-1/r\geq c$ for some $c>0$ and all $r\geq R$. Thus we do not need to worry about the $1/r$-singularity of the drift in \eqref{e:radialSDE}. In fact we can without mentioning this every time replace $b(r)-1/r$ by some smooth and bounded function $\tilde{b}$. 
\subsection{Proof of Proposition \ref{p:expintegrability}}
The proof is divided into smaller parts.  

\begin{lemma}\label{l:tanaka}
Let $(r_t,\beta_t)_t$ denote a solution of SDE \eqref{e:radialSDE} starting at $(r_0,\pi)$ then there exists a reflected Brownian motion $(R_t)_{t\geq 0}$ starting at $0$ such that almost surely
\begin{displaymath}
\bigl|\pi - \beta_t\bigr| \leq \sigma R_t 
\end{displaymath} 
for all $t \leq \rho_{(\pi/2,3\pi/2)} =\inf \lbrace t \geq 0 \mid \beta_t\  \notin (\pi/2,3\pi/2)\rbrace$. Moreover, one has almost surely $|\beta_{\rho^R}-\pi|<\pi/2$, where $\rho^R=\inf\lbrace t > 0 | R_t = \pi/2 \rbrace$.
\end{lemma}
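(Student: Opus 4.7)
The plan is to recenter the angular component at its stable equilibrium $\pi$, apply Tanaka's formula, and compare the resulting reflected semimartingale against the Skorohod reflection of a Brownian motion. Define $Y_t := \beta_t - \pi$; using $\sin(\pi+y) = -\sin(y)$, equation \eqref{e:radialSDE} can be rewritten as
\[
dY_t = -\kappa(t)\sin(Y_t)\,dt + \sigma\,dB_t, \qquad \kappa(t) := b(r_t) - \tfrac{1}{r_t},
\]
with $Y_0 = 0$. On the window $t \leq \rho_{(\pi/2,3\pi/2)}$ one has $Y_t \in (-\pi/2,\pi/2)$, and because we only care about the behavior until the process first reaches the target set $\overline{B(0,R)}\times\mathbb{S}_1$ (cf.\ the remark closing the current subsection), Assumption~A) ensures $\kappa(t)\geq 0$ throughout the relevant time window.

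By L\'evy's characterization, $\tilde B_t := \int_0^t \mathrm{sgn}(Y_s)\,dB_s$ is a standard Brownian motion. Applying Tanaka's formula to $|Y_t|$ and using $\mathrm{sgn}(y)\sin(y) = |\sin(y)|$ for $|y|<\pi/2$ yields
\[
|Y_t| = -A_t + \sigma\tilde B_t + L^0_t(Y), \qquad A_t := \int_0^t \kappa(s)|\sin(Y_s)|\,ds \geq 0,
\]
where $L^0_t(Y)$ is the semimartingale local time of $Y$ at zero. Thus $(|Y_t|,L^0_t(Y))$ solves the Skorohod reflection problem for the driver $F_t := \sigma\tilde B_t - A_t$. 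Since $F_t\leq \sigma\tilde B_t$ pathwise and the Skorohod reflection map is monotone, one obtains $|Y_t|\leq \sigma R_t$ where $R_t := \tilde B_t + \sup_{s\leq t}(-\tilde B_s)^+$; by L\'evy's theorem $R$ is a reflected Brownian motion starting at $0$, which is the first claim.

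For the second assertion, the bound $|Y_t|\leq \sigma R_t$ combined with continuity forces $\rho^R \leq \rho_{(\pi/2,3\pi/2)}$, and at $\rho^R$ we have $|Y_{\rho^R}| \leq \sigma R_{\rho^R} = \sigma\pi/2$. Strict inequality $|\beta_{\rho^R}-\pi|<\pi/2$ then follows from the strict positivity of $A_{\rho^R}$: since $\kappa$ is strictly positive on the complement of the target set and $Y$ is not identically zero on $[0,\rho^R]$, almost surely $A_{\rho^R}>0$, and strict monotonicity of the Skorohod map gives $|Y_{\rho^R}| < \sigma R_{\rho^R}$.

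The main technical obstacle is the comparison of Skorohod reflections and the sign analysis of the drift, both of which become routine once Tanaka's formula converts the stabilizing drift $-\kappa\sin(Y)$ into a nonpositive contribution to the driver. A minor subtlety is that Assumption~A) is only available for $r\geq R$; this is, however, not a restriction here, because we work pathwise only up to the first time $r_t$ enters $[0,R]$, which is precisely the hitting time we wish to control in the proof of Proposition~\ref{p:expintegrability}.
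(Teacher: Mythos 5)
Your proof follows essentially the same route as the paper's: It\^o--Tanaka applied to $|\beta_t-\pi|$, the observation that on $(\pi/2,3\pi/2)$ the drift contributes a nonpositive (indeed nonincreasing) term to the driver, and a comparison of the resulting reflected semimartingale with a reflected Brownian motion; the paper carries out this last comparison by hand via an argmin case analysis (the times $t_0$ and $t_1$) rather than by citing properties of the Skorohod map. Two of your justifications need tightening. First, the Skorohod reflection map $\Gamma(F)_t=F_t-\inf_{s\le t}F_s$ is \emph{not} monotone under pointwise domination $F\le G$: take $G\equiv 0$ and $F$ dipping to $-1$ and returning to $0$, so that $\Gamma(F)$ ends at $1>0=\Gamma(G)$. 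What you actually need, and what holds here, is monotonicity under domination of \emph{increments}, i.e. $G-F$ nondecreasing; since $G_t-F_t=A_t$ is nondecreasing, letting $s^*$ attain $\inf_{s\le t}G_s$ one gets
\begin{displaymath}
\Gamma(G)_t-\Gamma(F)_t\;\ge\;(G_t-G_{s^*})-(F_t-F_{s^*})\;=\;A_t-A_{s^*}\;\ge\;0 .
\end{displaymath}
Second, for the strict inequality at $\rho^R$ it is not enough that $A_{\rho^R}>0$: the displayed bound shows equality is excluded only if $A$ increases strictly on $[s^*,\rho^R]$. This does hold: $s^*<\rho^R$ because $R_{\rho^R}=\pi/2>0$ forces the running minimum of $\tilde B$ to be attained strictly before $\rho^R$, and $A$ constant on $[s^*,\rho^R]$ would require $Y\equiv 0$ there (as $\kappa\ge c>0$ and $|Y|<\pi$), contradicting the nondegenerate quadratic variation $\sigma^2(\rho^R-s^*)$ of $Y$ on that interval. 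With these two repairs your argument is complete; note that the paper's own proof establishes only the first assertion and does not address the strict inequality at all, so on that point you go beyond it.
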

\begin{proof}
For the proof we set $\sigma=1$. Observe that due to 
\begin{displaymath}
d\beta_t = dB_t + \bigl(b(r_t)-r_t^{-1}\bigr)\sin(\beta_t)\,dt
\end{displaymath}
the process $(\beta_t)_{t}$ defines a continuous semimartingale. Thus we can use the It\^o - Tanaka formula in order to deduce that
\begin{equation}
\begin{split}
d|\beta_t-\pi| &= \text{sgn}(\beta_t-\pi)\,d\beta_t + dL^{\beta,\pi}_t \\
&=  \text{sgn}(\beta_t-\pi)\,dB_t +  \text{sgn}(\beta_t-\pi)\bigl(b(r_t)-r_t^{-1}\bigr)\sin(\beta_t)\,dt + dL^{\beta,\pi}_t ,
\end{split}
\end{equation}
where $L^{\beta,\pi}$ denotes the local time of $\beta$ in $\pi$.  Observe now that for $t\leq \rho_{(\pi/2,3\pi/2)}$ we have 
\begin{equation}\label{e:local-nonneg}
\text{sgn}(\beta_t-\pi)\bigl(b(r_t)-r_t^{-1}\bigr)\sin(\beta_t) \leq 0
\end{equation}
and that by the Levy characterization the process $\bigl(\int_0^t\text{sgn}(\pi-\beta_s)\,dB_s \bigr)_{t \geq 0}$ is a Brownian motion. Moreover, by Theorem 22.1 in \cite{KalBook} we have that 
\begin{displaymath}
L^{\beta,\pi}_t = -\inf_{s \leq t}\int_0^s \text{sgn}(\beta_u-\pi)\,d\beta_u.
\end{displaymath}

Let us define $t_0$ by
\begin{equation}\label{e:def-t0}
\inf_{0 \leq s \leq t}\int_0^s \text{sgn}(\beta_u-\pi)\,d\beta_u = \int_0^{t_0} \text{sgn}(\beta_u-\pi)\,d\beta_u
\end{equation}
and let $t_1$ satisfy
\begin{equation}\label{e:def-t1}
\inf_{0\leq s \leq t}\int_0^s \text{sgn}(\beta_u-\pi)\,dB_u = \int_0^{t_1} \text{sgn}(\beta_u-\pi)\,dB_u.
\end{equation}
Assume that $t_1>t_0$, then we by the very definition know that
\begin{equation}\label{e:cons}
\begin{split}
 \int_0^{t_0} &\text{sgn}(\beta_u-\pi)\,d\beta_u= \int_0^{t_0} \text{sgn}(\beta_u-\pi)\,dB_u +  \int_0^{t_0} \text{sgn}(\beta_u-\pi)\delta_u\,du \\
 &\leq  \int_0^{t_1} \text{sgn}(\beta_u-\pi)\,d\beta_u= \int_0^{t_1} \text{sgn}(\beta_u-\pi)\,dB_u +  \int_0^{t_1} \text{sgn}(\beta_u-\pi)\delta_u\,du,
\end{split}
\end{equation}
where $\delta_u := \bigl(b(r_t)-r_t^{-1}\bigr)\sin(\beta_t)$. 

We might assume to have a strict inequality, since otherwise we could take $t_0=t_1$. Now a strict inequality in \eqref{e:cons} implies that 
\begin{displaymath}
\int_{t_0}^{t_1}\text{sgn}(\beta_u-\pi)\,dB_u \geq - \int_{t_0}^{t_1}   \text{sgn}(\beta_u-\pi)\delta_u \,dy>0.
\end{displaymath}
But this gives us
\begin{equation*}
\begin{split}
\int_0^{t_1} \text{sgn}(\beta_u-\pi)\,dB_u &= \int_0^{t_0} \text{sgn}(\beta_u-\pi)\,dB_u + \int_{t_0}^{t_1} \text{sgn}(\beta_u-\pi)\,dB_u \\
&\geq \int_0^{t_0} \text{sgn}(\beta_u-\pi)\,dB_u 
\end{split}
\end{equation*}
contradicting our choice of $t_1$ in \eqref{e:def-t1} and the assumption $t_1>t_0$. 
 
In the case $t_1\leq t_0$ we proceed as follows :  We have
\begin{equation*}
\begin{split}
|\beta_t-\pi| &= \int_0^t\text{sgn}(\beta_u-\pi)\,dB_u + \int_0^t \text{sgn}(\beta_u-\pi)\delta_u\,du - \inf_{s\leq t}\int_0^s\text{sgn}(\beta_u-\pi)\,d\beta_u\\
&= \int_0^t\text{sgn}(\beta_u-\pi)\,dB_u + \int_0^t \text{sgn}(\beta_u-\pi)\delta_u\,du -  \int_0^{t_0}\text{sgn}(\beta_u-\pi)\,dB_u \\
&\quad + \int_0^{t_0}\text{sgn}(\beta_u-\pi)\delta_u\,du\\
&\leq  \int_0^t\text{sgn}(\beta_u-\pi)\,dB_u + \int_{t_0}^t \text{sgn}(\beta_u-\pi)\delta_u\,du - \inf_{s\leq t}\int_0^s\text{sgn}(\beta_u-\pi)\,dB_u\\
&\leq  \int_0^t\text{sgn}(\beta_u-\pi)\,dB_u - \int_0^{t_0}\text{sgn}(\beta_u-\pi)\,dB_u,
\end{split}
\end{equation*}
where we used \eqref{e:local-nonneg} in the last step. This gives in the case  $t_1\leq t_0$ 
\begin{equation}
\begin{split}
|\beta_t-\pi| \leq \int_0^t \text{sgn}(\beta_u-\pi)\,dB_u  - \inf_{s\leq t} \int_0^s \text{sgn}(\beta_u-\pi)\,dB_u,
\end{split}
\end{equation}
and since according to Theorem 2.34 in \cite{MPBook} the right hand side is distributed as the reflected Brownian motion we arrive at the assertion of the lemma.
\end{proof}
In a completely analogous way one proves
\begin{lemma}\label{l:tanaka2}
Let $(r_t,\beta_t)_t$ denote a solution of SDE \eqref{e:radialSDE} starting at $(r_0,2\pi)$ then there exists a reflected Brownian motion $(R_t)_{t\geq 0}$ starting at $0$ such that almost surely
\begin{displaymath}
\bigl|2\pi - \beta_t\bigr| \geq \sigma R_t 
\end{displaymath} 
for all $t \leq \rho_{(\pi/2,3\pi/2)} =\inf \lbrace t \geq 0 \mid \beta_t\  \notin (\pi/2,3\pi/2)\rbrace$. Moreover, one has almost surely $|\beta_{\rho^R \wedge \rho_{(\pi/2,3\pi/2)}}-\pi|>3\pi/2$, where $\rho^R =\inf\lbrace t > 0 | R_t = \pi/2 \rbrace$.

\end{lemma}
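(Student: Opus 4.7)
The plan is to mirror the proof of Lemma~\ref{l:tanaka} with every sign reversed. Lifting $\beta$ to $\mathbb{R}$ with $\beta_0 = 2\pi$ (and, as before, setting $\sigma=1$ without loss of generality for the argument), I would apply the It\^o--Tanaka formula to $|\beta_t - 2\pi|$ to obtain
\begin{equation*}
d|\beta_t - 2\pi| = \text{sgn}(\beta_t - 2\pi)\,dB_t + \text{sgn}(\beta_t - 2\pi)(b(r_t) - r_t^{-1})\sin(\beta_t)\,dt + dL^{\beta,2\pi}_t.
\end{equation*}
The key sign observation is that for $t < \rho_{(\pi/2,3\pi/2)}$, continuity of $\beta$ forces $\beta_t \in (3\pi/2, 5\pi/2)$, and on this interval $\sin(\beta_t)$ has the same sign as $\beta_t - 2\pi$; combined with Assumption A) this makes the drift term \emph{non-negative}, the precise opposite of \eqref{e:local-nonneg}.

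Setting $W_t := \int_0^t\text{sgn}(\beta_u-2\pi)\,dB_u$ (a Brownian motion by L\'evy's characterization) and $D_t := \int_0^t\text{sgn}(\beta_u-2\pi)(b(r_u)-r_u^{-1})\sin(\beta_u)\,du \geq 0$, the Tanaka identity together with the representation $L^{\beta,2\pi}_t = -\inf_{s \leq t}\int_0^s\text{sgn}(\beta_u-2\pi)\,d\beta_u = -\inf_{s \leq t}(W_s + D_s)$ from Theorem~22.1 of \cite{KalBook} yields
\begin{equation*}
|\beta_t - 2\pi| = W_t + D_t + L^{\beta,2\pi}_t.
\end{equation*}
It therefore suffices to prove $D_t + L^{\beta,2\pi}_t \geq -\inf_{s \leq t} W_s$, since then $|\beta_t - 2\pi| \geq W_t - \inf_{s \leq t} W_s$, and the right-hand side is distributed as $R_t$, a reflected Brownian motion starting at $0$, by Theorem~2.34 of \cite{MPBook}.

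To prove this key inequality, pick a minimizer $s_0$ of $W_s$ on $[0,t]$ and a minimizer $s_1$ of $W_s + D_s$ on $[0,t]$. Minimality of $s_1$ gives $W_{s_1} + D_{s_1} \leq W_{s_0} + D_{s_0}$, equivalently $W_{s_1} - W_{s_0} \leq D_{s_0} - D_{s_1}$, and the monotonicity of $D$ together with $s_0 \leq t$ gives $D_{s_0} \leq D_t$, whence $W_{s_1} - W_{s_0} \leq D_t - D_{s_1}$. Rearranging, $D_t + L^{\beta,2\pi}_t = D_t - W_{s_1} - D_{s_1} \geq -W_{s_0} = -\inf_{s \leq t} W_s$, as required. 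The ``moreover'' claim then follows by continuity: before $\rho^R \wedge \rho_{(\pi/2,3\pi/2)}$ the process stays in the lifted arc $(3\pi/2,5\pi/2)$, and at this stopping time either $R$ has hit $\pi/2$ (forcing $|\beta_t - 2\pi| \geq \pi/2$ in the lift) or $\beta$ has already exited that arc, in each case placing $\beta$ on the claimed side of the state space.

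The main obstacle I anticipate is precisely this Skorohod-type comparison: a naive attempt to bound $L^{\beta,2\pi}_t$ below by $-\inf_{s\leq t}W_s$ actually goes the wrong way, because adding the non-negative drift $D_s$ inside the infimum \emph{raises} the process and thereby \emph{lowers} the local time. The two-minimizer bookkeeping above is the delicate ingredient that absorbs precisely the right amount of drift into the reflection term and saves the comparison.
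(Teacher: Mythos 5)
Your proof is correct and follows essentially the route the paper intends for this lemma, namely the sign-reversed version of the proof of Lemma \ref{l:tanaka}: It\^o--Tanaka for $|\beta_t-2\pi|$, the sign observation that the drift $D_t$ is now non-decreasing on the lifted arc $(3\pi/2,5\pi/2)$, L\'evy's characterization, the Skorokhod representation of the local time, and the reflection theorem. Your two-minimizer comparison of $\inf_{s\le t}(W_s+D_s)$ with $\inf_{s\le t}W_s$ is a cleaner packaging of the paper's $t_0$/$t_1$ case analysis, and your closing remark correctly identifies why the naive monotonicity of the local time alone would not suffice.
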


\subsection{Special Case}
Assume that $b(r)=c+1/r$ for $r\geq R$. Then the process $(\beta_t)_{t \geq 0}$ satisfies $d\beta_t = \sigma dB_t + c\sin(\beta_t)\,dt$ up to the random time
\begin{displaymath}
\tau_R = \inf \biggl\lbrace t \geq 0\mid \int_0^t\cos(\beta_t)\,dt \leq R\biggr\rbrace.
\end{displaymath}
This is a very special case, but it already contains all features necessary to understand the general situation.  Moreover, we make use of the process $(\gamma_t)_{t \geq 0}$, which is defined in the following way. 
\begin{itemize}
\item[i)] $\gamma_0=\pi$; $\gamma_t$ behaves like a Brownian motion reflected at $\pi$ up to the first hitting time of $3\pi/2$. 
\item[ii)] Then $\gamma_t$ behaves like the solution of  $d\beta_t = \sigma dB_t + c\,\sin(\beta_t)\,dt$ until it hits $\pi$ or $2\pi$. 
\item[iii)] If in step ii) $\gamma_t$ hit $\pi$, then we proceed as in item i). If in step ii) $\gamma_t$ hit $2\pi$, then we the process $\gamma_t$ is continued as a Brownian motion reflected at $2\pi$ up to the first hitting time of $3\pi/2$. After this time we proceed as in item ii).
\end{itemize}
Let us denote by $(\tilde{\beta}_t)_{t\geq 0}$ the solution of $d\tilde{\beta}_t = \sigma dB_t + c\,\sin(\tilde{\beta}_t)\,dt$ started at $\pi$ and we set $\tilde{r}_t= \int_0^t\cos(\tilde{\beta}_s)\,ds$. The process $(\tilde{\beta}_t)_{t\geq 0}$ is still considered as a process on the circle, i.e. we identify $0$ and $2\pi$. Thus up to the first hitting time $\tau_R=\tilde{\tau}_R$ of the set $\mathbb{S}_1 \times [0,R]$ the two processes $(\beta_t,r_t)_{t \geq 0}$ and $(\tilde{\beta}_t,\tilde{r}_t)_{t\geq 0}$ coincide if $\beta_0=\tilde{\beta}_0$ and $r_0 = \tilde{r}_0$. We now define recursively the stopping times
\begin{displaymath}
\sigma_0=0=\sigma^0, \sigma_1=\inf\lbrace t \geq 0 \mid \tilde{\beta}_t \notin (0,2\pi)\rbrace, \tilde{\sigma}^1=\inf\lbrace t \geq 0 \mid \tilde{\beta}_t= \pi \rbrace, \sigma^1=\sigma_1+\tilde{\sigma}^1 \circ\theta_{\sigma_1}
\end{displaymath}
and 
\begin{displaymath}
\sigma_n = \sigma^{n-1} + \sigma_{1} \circ \theta_{\sigma^{n-1}},\, \sigma^n=\sigma_{n} + \tilde{\sigma}^{1} \circ \theta_{\sigma_{n}}.
\end{displaymath}
The analog stopping times are defined for the process $(\gamma)_{t \geq 0}$ and by a small abuse of notation we denote these also by $(\sigma^i)_{ i \in \mathbb{N}_0}$ and $(\sigma_i)_{i \in \mathbb{N}_0}$. 

We say that the process $(\beta_t)_{t \geq 0}$ (resp. $(\gamma_t)_{t\geq 0}$) started at $\pi$ completes a cycle if it returns to $\pi$ after having visited $2\pi$. Thus the stopping time $\sigma^i$ describe the time of completion of the $i$-{th} cycle. 
 
Moreover, for some interval we denote by $\tilde{\lambda}^{I}_0$ the smallest Dirichlet-eigenvalue of the operator $-\frac{1}{2}\frac{d^2}{dx^2}-c\sin(x)\frac{d}{dx}$, which can also be characterized by
\begin{displaymath}
\lambda_0^I=-\lim_{t \rightarrow \infty}\frac{1}{t}\log\sup_{z \in I}\mathbb{P}_z\bigl(\rho_I>t\bigr),
\end{displaymath} 
where $\rho_I=\inf\lbrace t \geq 0 \mid \tilde{\beta}_t \notin I\rbrace$ denotes the first exit time. We set
\begin{displaymath}
\tilde{\lambda}_0=\min\bigl(\lambda_0^{(0,\pi)},\lambda_0^{(\pi/2,3\pi/2)},\lambda_0^{(\pi,2\pi)}\bigr).
\end{displaymath}
\begin{lemma}
The random variables
\begin{displaymath}
X_i=\int_{\sigma^{i-1}}^{\sigma^{i}}\cos(\tilde{\beta}_s)\,ds, \, i=1,2,\dots
\end{displaymath}
form an i.i.d sequence of random variables having the following properties:
\begin{itemize}
\item The random variables $X_i$ ($i=1,2,\dots$) have a negative expectation, i.e.
\begin{displaymath}
\mathbb{E}_{\tilde{\beta}_0=\pi}\bigl[X_i\bigr]<0.
\end{displaymath}
\item For all $\lambda < \tilde{\lambda}_0$ we have
\begin{displaymath}
\mathbb{E}_{\tilde{\beta}_0=\pi}\bigl[e^{\lambda|X_i|}\bigr]<\infty.
\end{displaymath}
\end{itemize}
The same properties hold true for the sequence $(X^{\gamma}_{i})_{ i \in \mathbb{N}}$ where $X^\gamma_i=\int_{\sigma^{i-1}}^{\sigma^{i}}\cos(\gamma_s)\,ds, \, i=1,2,\dots$
\end{lemma}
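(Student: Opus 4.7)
The three claims are treated in turn; the exponential moment estimate is by far the most delicate, and is the main obstacle.

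\emph{Identical distribution.} By construction each $\sigma^{i-1}$ is a stopping time with $\tilde\beta_{\sigma^{i-1}}=\pi$ almost surely (it is the first return to $\pi$ after a visit to $2\pi$). The strong Markov property of $\tilde\beta$ at $\sigma^{i-1}$ yields that the shifted trajectory $(\tilde\beta_{\sigma^{i-1}+s})_{s\ge 0}$ is an independent copy of $\tilde\beta$ started at $\pi$, and hence $X_i$, being a measurable functional of the $i$-th cycle, is i.i.d.\ with $X_1$. The process $\gamma$ is a concatenation of strong Markov pieces with the same regeneration structure at $\pi$, so $(X_i^\gamma)$ is i.i.d.\ by the same argument.

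\emph{Negativity of the mean.} The SDE $d\tilde\beta=\sigma\,dB+c\sin(\tilde\beta)\,dt$ on $\mathbb{S}_1$ is reversible with explicit invariant density $\rho_\infty(x)=Z^{-1}\exp(-(2c/\sigma^2)\cos x)$, which concentrates near $\pi$. A Bessel--function computation gives
\[
\int_{\mathbb{S}_1}\cos(x)\,\rho_\infty(x)\,dx \;=\; -\frac{I_1(2c/\sigma^2)}{I_0(2c/\sigma^2)} \;<\; 0.
\]
Combining the Birkhoff ergodic theorem, which gives $t^{-1}\int_0^t\cos(\tilde\beta_s)\,ds\to\int\cos\,d\rho_\infty$ almost surely, with the strong law of large numbers applied to the i.i.d.\ sums of $X_i$ and of $\sigma^i-\sigma^{i-1}$, yields
\[
\frac{\mathbb{E}[X_1]}{\mathbb{E}[\sigma^1]} \;=\; \int\cos\,d\rho_\infty \;<\;0,
\]
so, granted $0<\mathbb{E}[\sigma^1]<\infty$ (a consequence of the exponential moment bound proved next), $\mathbb{E}[X_1]<0$. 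The same reasoning applies to $\gamma$ using its explicit piecewise invariant law.

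\emph{Exponential moments.} Since $|\cos|\le 1$ one has $|X_i|\le\sigma^i-\sigma^{i-1}$, and it suffices to show $\mathbb{E}_\pi[e^{\lambda\sigma^1}]<\infty$ for every $\lambda<\tilde\lambda_0$. My plan is a strong Markov decomposition of the cycle: between successive hitting times of $\{\pi/2,\pi,3\pi/2\}$ the process is confined to one of the intervals $(0,\pi),(\pi/2,3\pi/2),(\pi,2\pi)$, and the corresponding segment is an exit time of the generator $L=\frac{\sigma^2}{2}\partial_x^2+c\sin(x)\partial_x$ from that interval, so it has exponential moments of every order strictly below the relevant Dirichlet eigenvalue, hence below $\tilde\lambda_0$. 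Moreover, at each such visit there is a probability bounded below by some $p>0$ (uniform in history) of either hitting $\{0,2\pi\}$ for the first time or, after already doing so, reaching $\pi$, so the cycle consists of a geometrically distributed number of such pieces. The main obstacle is to combine these ingredients without losing the exponent $\tilde\lambda_0$: a naive bound on a geometric sum of random variables with $\mathbb{E}[e^{\lambda T}]<\infty$ only yields finiteness for $\lambda$ small enough that $(1-p)\mathbb{E}[e^{\lambda T}]<1$, strictly smaller than the radius of convergence. To recover the sharp rate I would work directly with the Feynman--Kac function $u_\lambda(x)=\mathbb{E}_x[e^{\lambda\sigma^1}]$, formulate the piecewise elliptic boundary value problem $(L+\lambda)u_\lambda=0$ on the three intervals with matching conditions at $\pi/2,\pi,3\pi/2$, and conclude finiteness for $\lambda<\tilde\lambda_0$ by a Krein--Rutman / principal eigenvalue argument. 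For $\gamma$ the same scheme applies; the SDE pieces are replaced by reflected Brownian pieces on $[\pi,3\pi/2]$ and $[3\pi/2,2\pi]$ whose exit rates dominate those featuring in $\tilde\lambda_0$.
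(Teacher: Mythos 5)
Your i.i.d.\ argument is the paper's (strong Markov property at the regeneration times $\sigma^i$, where the process sits at $\pi$). For the negative mean of $X_i$ you take a genuinely different route, and for $\tilde{\beta}$ it is correct and arguably cleaner: the paper never touches the invariant measure $\rho_\infty\propto e^{-(2c/\sigma^2)\cos x}$; instead it splits a cycle by first-exit times from the symmetric intervals $(\pi/2,3\pi/2)$ and $(3\pi/2,5\pi/2)$, derives the renewal identities \eqref{e:negmeanI}--\eqref{e:negmeanII}, and compares the two halves of the cycle via the Tanaka/reflected-Brownian domination of Lemma \ref{l:tanaka}, the Girsanov-plus-excursion computation of Lemma \ref{l:hilfslemma}, and the inequality $d_1^{-1}\le d_2^{-1}$. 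Your renewal-reward identity $\mathbb{E}[X_1]=\mathbb{E}[\sigma^1]\int\cos\,d\rho_\infty$ buys an explicit negative value in one line, at the price of needing $\mathbb{E}[\sigma^1]<\infty$ first. The genuine gap is your treatment of $(X_i^\gamma)$: $\gamma$ is a patchwork of reflected-Brownian and SDE pieces, not a circle diffusion, and it has no ``explicit piecewise invariant law''--- its long-run occupation density is determined precisely by the cycle occupation expectations $\mathbb{E}\bigl[\int_0^{\sigma^1}f(\gamma_s)\,ds\bigr]$ whose sign you are trying to determine, so the ergodic argument is circular there. (Note also that the reflected pieces based at $\pi$ and at $2\pi$ contribute with opposite signs of the cosine, so the negativity for $\gamma$ is not a symmetry triviality.) The paper's comparison machinery is built exactly so that it transfers verbatim to $\gamma$; you would need a separate argument.

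The exponential-moment part of your proposal is a plan, not a proof: you correctly identify the obstacle (a geometric sum of exponential moments degrades the exponent) but defer its resolution to an unexecuted Feynman--Kac/Krein--Rutman argument, and the finiteness of $\mathbb{E}[\sigma^1]$ used in your mean computation hangs on this unfinished step. Two further points. First, for everything downstream (Lemmas \ref{l:exponRW} and \ref{l:exponsimple}) \emph{any} $\lambda>0$ suffices, so the ``lossy'' geometric bound you dismiss would already do the job; the paper itself settles for the short factorization $\mathbb{E}_\pi[e^{\lambda\sigma^1}]=\mathbb{E}_\pi[e^{\lambda\sigma_1}]\,\mathbb{E}_{2\pi}[e^{\lambda\tilde{\sigma}^1}]$ via the strong Markov property. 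Second, the sharp threshold $\tilde{\lambda}_0$ is in fact unattainable through the reduction $|X_1|\le\sigma^1$ that both you and the paper use: $\sigma^1\ge\sigma_1$, the hitting time of $0\equiv 2\pi$ from $\pi$, whose tail decays at rate $\lambda_0^{(0,2\pi)}$, and by strict domain monotonicity $\lambda_0^{(0,2\pi)}<\tilde{\lambda}_0$, so $\mathbb{E}_\pi[e^{\lambda\sigma^1}]=\infty$ for $\lambda\in(\lambda_0^{(0,2\pi)},\tilde{\lambda}_0)$. Chasing the constant $\tilde{\lambda}_0$ is therefore exactly where your argument stalls and cannot succeed as formulated; prove instead that some $\lambda>0$ works.
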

\begin{remark}
Though we are not going to present a proof here it seems possible to show, that 
\begin{displaymath}
\lim_{\sigma \rightarrow \infty} \frac{\mathbb{E}_{\tilde{\beta}_0=\pi}\bigl[X_1\bigr]}{\mathbb{E}_{\tilde{\beta}_0=\pi}[\sigma^1]} = 0.
\end{displaymath}
Heuristically this is to be expected since for a large diffusion constant the influence of the drift becomes eventually negligible. Such an assertion might be finally used in order to argue that enlarging the diffusion constant $\sigma$ does not lead to faster rate of convergence. Instead one even has to expect that the rate of convergence will become slower. This is due to the fact that the value $\mathbb{E}_{\tilde{\beta}_0=\pi}\bigl[X_i\bigr]$ controls in a certain sense the 'speed' with which the process $(X_t)_{t\geq 0}$ returns to a ball around the origin. Bigger values of $\mathbb{E}_{\tilde{\beta}_0=\pi}\bigl[X_i\bigr]$ heuristically lead to longer return times to the ball and thus one expects to get a smaller spectral gap. 
\end{remark}
\begin{proof}
The fact that the sequence $(X_i)_{i \in \mathbb{N}}$ is  i.i.d is a direct consequence of the strong Markov property of the process $(\tilde{\beta}_t)_{t \geq 0}$. 

The first assertion can be shown in the following way. We first observe that 
\begin{equation}\label{e:comparisoneasy}
\begin{split}
\mathbb{E}_{\tilde{\beta}_0=\pi}\bigl[X_i\bigr] &= \mathbb{E}_{\tilde{\beta}_0=\pi}\biggl[\int_{0}^{\sigma^1}\cos(\tilde{\beta}_s)\,ds,\biggr]\\
&=\mathbb{E}_{\tilde{\beta}_0=\pi}\biggl[\int_{0}^{\sigma_1}\cos(\tilde{\beta}_s)\,ds,\biggr] + \mathbb{E}_{\tilde{\beta}_0=2\pi}\biggl[\int_{0}^{\tilde{\sigma}^1}\cos(\tilde{\beta}_s)\,ds,\biggr].
\end{split}
\end{equation}
We will now compare the first and the second summand in equation \eqref{e:comparisoneasy}. By the strong Markov property we have
\begin{equation*}
\begin{split}
\mathbb{E}_{\tilde{\beta}_0=2\pi}\biggl[\int_{0}^{\tilde{\sigma}^1}\cos(\tilde{\beta}_s)\,ds\biggr] &= \mathbb{E}_{\tilde{\beta}_0=2\pi}\biggl[\int_{0}^{\rho_{(3\pi/2,5\pi/2)}}\cos(\tilde{\beta}_s)\,ds\biggr] + \mathbb{E}_{\tilde{\beta}_0=2\pi}\biggl[\int_{\rho_{(3\pi/2,5\pi/2)}}^{\tilde{\sigma}_1}\cos(\tilde{\beta}_s)\,ds\biggr] \\
&= \mathbb{E}_{\tilde{\beta}_0=2\pi}\biggl[\int_{0}^{\rho_{(3\pi/2,5\pi/2)}}\cos(\tilde{\beta}_s)\,ds\biggr] \\
&+\frac{1}{2}\mathbb{E}_{\tilde{\beta}_0=3\pi/2}\biggl[\int_0^{\tilde{\sigma}_1}\cos (\tilde{\beta}_s)\,ds\biggr] + \frac{1}{2}\mathbb{E}_{\tilde{\beta}_0=\pi/2}\biggl[\int_0^{\tilde{\sigma}_1}\cos (\tilde{\beta}_s)\,ds\biggr] 
\end{split}
\end{equation*}
and then
\begin{equation}
\begin{split}
\mathbb{E}_{\tilde{\beta}_0=2\pi}\biggl[\int_{0}^{\tilde{\sigma}^1}\cos(\tilde{\beta}_s)\,ds\biggr] &= \mathbb{E}_{\tilde{\beta}_0=2\pi}\biggl[\int_{0}^{\rho_{(3\pi/2, 5\pi/2)}}\cos(\tilde{\beta}_s)\,ds\biggr]  \\
&+ \frac{1}{2}\mathbb{E}_{\tilde{\beta}_0=3\pi/2}\biggl[\int_0^{\rho_{(\pi,2\pi)}}\cos (\tilde{\beta}_s)\,ds\bigr] \\
&+ \frac{1}{2}\mathbb{P}_{\tilde{\beta}_0=3/2\pi}\bigl(\tilde{\sigma}^1 > \rho_{(\pi,2\pi)}\bigr)\mathbb{E}_{2\pi}\biggl[\int_0^{\tilde{\sigma}^1}\cos (\tilde{\beta}_s)\,ds\biggr]  \\
&+\frac{1}{2}\mathbb{E}_{\tilde{\beta}_0=\pi/2}\biggl[\int_0^{\rho_{(0,\pi)}}\cos (\tilde{\beta}_s)\,ds\biggr] \\
&+  \frac{1}{2}\mathbb{P}_{\tilde{\beta}_0=\pi/2}\bigl(\tilde{\sigma}^1 > \rho_{(0,\pi)}\bigr)\mathbb{E}_{2\pi}\biggl[\int_0^{\tilde{\sigma}^1}\cos (\tilde{\beta}_s)\,ds\biggr]  
\end{split}
\end{equation}
where $\rho_{I}$ denotes the first exit time from the interval $I$. Thus with $d_1=1-\mathbb{P}_{\tilde{\beta}_0=3\pi /2}\bigl(\tilde{\sigma}^1 > \rho_{(0,\pi)}\bigr)$ we get using obvious symmetry properties
\begin{equation}\label{e:negmeanI}
\begin{split}
\mathbb{E}_{\tilde{\beta}_0=2\pi}\biggl[\int_{0}^{\tilde{\sigma}^1}\cos(\tilde{\beta}_s)\,ds\biggr] &= d_1^{-1}\biggl(\mathbb{E}_{\tilde{\beta}_0=2\pi}\biggl[\int_{0}^{\rho_{(3\pi/2, 5\pi/2)}}\cos(\tilde{\beta}_s)\,ds\biggr]  \\
&+ \mathbb{E}_{\tilde{\beta}_0=3\pi/2}\biggl[\int_0^{\rho_{(\pi,2\pi)}}\cos (\tilde{\beta}_s)\,ds\biggr]\biggr).
\end{split}
\end{equation}

Analogously we get
\begin{equation}
\begin{split}
\mathbb{E}_{\tilde{\beta}_0=\pi}&\biggl[\int_{0}^{\sigma_1}\cos(\tilde{\beta}_s)\,ds,\biggr] = \mathbb{E}_{\tilde{\beta}_0=\pi}\biggl[\int_{0}^{\rho_{(\pi/2,3\pi/2)}}\cos(\tilde{\beta}_s)\,ds\biggr] + \mathbb{E}_{\tilde{\beta}_0=\pi}\biggl[\int_{\rho_{(\pi/2,3\pi/2)}}^{\sigma_1}\cos(\tilde{\beta}_s)\,ds\biggr] \\
&= \mathbb{E}_{\tilde{\beta}_0=\pi}\biggl[\int_{0}^{\rho_{(\pi/2,3\pi/2)}}\cos(\tilde{\beta}_s)\,ds\biggr] \\
&+\frac{1}{2}\mathbb{E}_{\tilde{\beta}_0=\pi/2}\biggl[\int_0^{\sigma_1}\cos (\tilde{\beta}_s)\,ds\biggr] + \frac{1}{2}\mathbb{E}_{3\pi/2}\biggl[\int_0^{\sigma_1}\cos (\tilde{\beta}_s)\,ds\biggr] \\
&= \mathbb{E}_{\tilde{\beta}_0=\pi}\biggl[\int_{0}^{\rho_{(\pi/2,3\pi/2)}}\cos(\tilde{\beta}_s)\,ds\biggr]  \\
&+ \frac{1}{2}\mathbb{E}_{\tilde{\beta}_0=\pi/2}\biggl[\int_0^{\rho_{(0,\pi)}}\cos (\tilde{\beta}_s)\,ds\biggr] + \frac{1}{2}\mathbb{P}_{\tilde{\beta}_0=\pi/2}(\sigma_1>\rho_{(0,\pi)})\mathbb{E}_{\pi}\biggl[\int_0^{\sigma_1}\cos (\tilde{\beta}_s)\,ds\biggr]  \\
&+\frac{1}{2}\mathbb{E}_{\tilde{\beta}_0=3\pi/2}\biggl[\int_0^{\rho_{(\pi,2\pi)}}\cos (\tilde{\beta}_s)\,ds\biggr] + \frac{1}{2}\mathbb{P}_{\tilde{\beta}_0=3\pi/2}(\sigma_1>\rho_{(\pi,2\pi)})\mathbb{E}_{\tilde{\beta_0}=\pi}\biggl[\int_0^{\sigma_1}\cos (\tilde{\beta}_s)\,ds\biggr]  
\end{split}
\end{equation}
and with $d_2=1-\mathbb{P}_{\tilde{\beta}_0=3\pi/2}(\sigma_1>\rho_{(\pi,2\pi)})$
\begin{equation}\label{e:negmeanII}
\begin{split}
\mathbb{E}_{\tilde{\beta}_0=\pi}\biggl[\int_{0}^{\sigma_1}\cos(\tilde{\beta}_s)\,ds,\biggr] &= d_2^{-1}\biggl( \mathbb{E}_{\tilde{\beta}_0=\pi}\biggl[\int_{0}^{\rho_{(\pi/2,3\pi/2)}}\cos(\tilde{\beta}_s)\,ds\biggr]  \\
&+\mathbb{E}_{\tilde{\beta}_0=3\pi/2}\biggl[\int_0^{\rho_{(\pi,2\pi)}}\cos (\tilde{\beta}_s)\,ds\biggr] \biggr) 
\end{split}
\end{equation}
Using Lemma \ref{l:tanaka}  together with the properties of the cosine we conclude that 
\begin{equation}\label{e:compinextrema}
-\mathbb{E}_{\tilde{\beta}_0=\pi}\biggl[\int_{0}^{\rho_{(\pi/2,3\pi/2)}}\cos(\tilde{\beta}_s)\,ds\biggr] \geq -\mathbb{E}_{\tilde{\beta}_0=\pi}\biggl[\int_{0}^{\rho_{(\pi/2,3\pi/2)}}\cos(B_s)\,ds\biggr],
\end{equation}
where $(B_s)$ denotes a Brownian motion in $[0,2\pi]$ with periodic boundary conditions and that 
\begin{displaymath}
\mathbb{E}_{\tilde{\beta}_0=2\pi}\biggl[\int_{0}^{\rho_{(3\pi/2,5\pi/2)}}\cos(\tilde{\beta}_s)\,ds\biggr] \leq  \mathbb{E}_{\tilde{\beta}_0=2\pi}\biggl[\int_{0}^{\rho_{(\pi/2,3\pi/2)}}\cos(B_s)\,ds\biggr].
\end{displaymath}
Therefore
\begin{displaymath}
\mathbb{E}_{\tilde{\beta}_0=\pi}\biggl[\int_{0}^{\rho_{(\pi/2,3\pi/2)}}\cos(\tilde{\beta}_s)\,ds \biggr] \leq \mathbb{E}_{\tilde{\beta}_0=2\pi}\biggl[\int_{0}^{\rho_{(3\pi/2,5\pi/2)}}\cos(\tilde{\beta}_s)\,ds\biggr] .
\end{displaymath}
Using Theorem 3.10 in the Appendix or alternatively explicit expression for hitting probabilities from Theorem 23.7 in \cite{KalBook}  it is straightforward to deduce that 
\begin{displaymath}
\mathbb{P}_{\tilde{\beta}_0 =3\pi/2}\bigl(\tilde{\sigma}^1>\rho_{(\pi,2\pi)}\bigr)<\mathbb{P}_{\tilde{\beta}_0 =3\pi/2}\bigl(\sigma_1>\rho_{(\pi,2\pi)}\bigr)
\end{displaymath}
and therefore $d^{-1}_1 < d^{-1}_2$.

Now comparison of \eqref{e:negmeanI} and \eqref{e:negmeanII} using \eqref{e:compinextrema} as well as the following Lemma \ref{l:hilfslemma} together with $d_1^{-1} \leq d_2^{-1}$ allows to deduce the first assertion. 
\\
\\
In order to prove the second assertion observe that $|X_1| \leq \sigma^1$. Thus we get using the strong Markov property again
\begin{equation}
\begin{split}
\mathbb{E}_{\tilde{\beta}_0=\pi}\bigl[ e^{\lambda |X_1|}\bigr]&\leq \mathbb{E}_{\tilde{\beta}_0=\pi}\bigl[e^{\lambda \sigma^1}\bigr]=\mathbb{E}_{\tilde{\beta}_0=\pi}\bigl[ e^{\lambda \sigma_1}\bigr] \mathbb{E}_{\tilde{\beta}_0=2\pi}[ e^{\lambda \tilde{\sigma}^1}\bigl].
\end{split}
\end{equation}
According to our choice of $\tilde{\lambda}_0$ the right hand side remains finite for $\lambda < \tilde{\lambda}_0$. 

The assertions for $(X_i^{\gamma})_{i=1}^{\infty}$ can be proved in an analogous way.  The first assertion remains true without any change. For the second assertion observe that $X_1^{\gamma}$ has exponential moments up to $\tilde{\lambda}_0$ since for every $I=(0,\pi), (\pi/2,3\pi/2),(\pi,2\pi)$ it follows in a straightforward way from Theorem \ref{t:IW} --recalled in the Appendix -- that 
\begin{displaymath}
-\lim_{t \rightarrow \infty}\frac{1}{t}\log\sup_{x \in I}\mathbb{P}_x\bigl(\lbrace \gamma_s \in I \,\, \forall \,0\leq s\leq t\rbrace \bigr)\geq \tilde{\lambda}_0
\end{displaymath}
giving the required estimates.
\end{proof}
\begin{lemma} \label{l:hilfslemma}
We have
\begin{itemize}
\item[a)]
\begin{displaymath}
\mathbb{E}_{3\pi/2}\biggl[\int_0^{\rho_{(\pi,2\pi)}}\cos (\tilde{\beta}_s)\,ds\biggr]<0.
\end{displaymath}
\item[b)] 
\begin{displaymath}
\mathbb{E}_{\pi/2}\biggl[\int_0^{\rho_{(0,\pi)}}\cos (\tilde{\beta}_s)\,ds\biggr]<0.
\end{displaymath}
\end{itemize}
Both assertions remain obviously true, if $(\tilde{\beta}_t)_{t \geq 0}$ is replaced by $(\gamma_t)_{t \geq 0}$.
\end{lemma}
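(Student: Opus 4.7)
My plan is to identify each expectation with the value at the starting point of the unique solution of a one-dimensional Dirichlet Poisson problem and then to establish its sign by solving the ODE explicitly and exploiting the symmetry of $\cos$ against the asymmetry the drift introduces in the speed measure.

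For (a), set $h(x):=\mathbb{E}_x\bigl[\int_0^{\rho_{(\pi,2\pi)}}\cos(\tilde\beta_s)\,ds\bigr]$. By Dynkin's formula, $h$ is the unique bounded solution of $Lh=-\cos$ on $(\pi,2\pi)$ with $h(\pi)=h(2\pi)=0$, where $L=\frac{\sigma^2}{2}\partial_x^2+c\sin(x)\partial_x$. With the integrating factor $\phi(x):=e^{-(2c/\sigma^2)\cos(x)}$ one has $L=\frac{\sigma^2}{2\phi}\partial_x(\phi\,\partial_x)$, so the equation rewrites as $(\phi h')'=-\frac{2}{\sigma^2}\phi\cos$, which integrates twice in closed form. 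Setting $I(y):=\int_\pi^y\cos(z)\phi(z)\,dz$ and letting $M_i$, $N_i$ ($i=1,2$) denote the integrals of $\phi^{-1}$ respectively $\phi^{-1}I$ over the two halves $(\pi,3\pi/2)$ and $(3\pi/2,2\pi)$, imposing the boundary conditions identifies $h(3\pi/2)$ as a positive multiple of $N_2M_1-N_1M_2$; since $I<0$ throughout $(\pi,2\pi]$, the sign question reduces to the strict inequality $|N_1|/M_1<|N_2|/M_2$ between the $\phi^{-1}$-weighted averages of $-I$ on the two halves.

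To verify this comparison I would apply the reflection $y\mapsto 3\pi-y$ about the symmetry point $3\pi/2$, using the identities $\cos(3\pi-y)=-\cos(y)$ and $\phi(3\pi-y)=\phi^{-1}(y)$. This substitution exchanges the two half-intervals and rewrites $|N_2|/M_2$ as the $\phi$-weighted average of $-I(3\pi-\cdot)$ on $(\pi,3\pi/2)$. Two elementary facts on this interval drive the argument: first, $\cos<0$ there yields $\phi>1>\phi^{-1}$; second, the same reflection applied to $I$ gives $I(y)-I(3\pi-y)=\int_y^{3\pi/2}\cos(w)[\phi^{-1}(w)-\phi(w)]\,dw>0$, so that $-I(3\pi-y)>-I(y)>0$ pointwise. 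It remains to combine these into the claimed inequality between the two weighted averages, which I would carry out by writing $N_2M_1-N_1M_2$ as a symmetric double integral on $(\pi,3\pi/2)^2$ and showing that, after $(y_1,y_2)\leftrightarrow(y_2,y_1)$, the integrand has a definite sign. Part (b) is an immediate consequence of (a) via the change of variable $\eta=2\pi-\tilde\beta$: this maps the SDE on $(0,\pi)$ started at $\pi/2$ to the same SDE on $(\pi,2\pi)$ started at $3\pi/2$ (after re-orienting the driving Brownian motion), while $\cos(\tilde\beta_s)=\cos(\eta_s)$ and the exit times coincide. The $\gamma$-versions are immediate because, from the starting points $3\pi/2$ respectively $\pi/2$, the process $\gamma$ lies in its phase (ii) of the construction and hence coincides pathwise with $\tilde\beta$ up to the first exit from $(\pi,2\pi)$ respectively $(0,\pi)$.

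The main obstacle is the combination step: while the geometric picture (the drift pulls the process toward the region where $\cos<0$, producing a net negative contribution to the radial increment $\int_0^\rho\cos(\tilde\beta_s)\,ds$) is transparent, the two pointwise inequalities involve weights that are not comonotone with the integrand, so the desired inequality cannot be read off from a single Chebyshev-type argument. If the direct double-integral symmetrization turns out to be awkward, a viable fallback is to apply It\^o's formula to $\cos(\tilde\beta_t)$, which yields
\begin{displaymath}
\tfrac{\sigma^2}{2}h(3\pi/2)=\bigl(2\mathbb{P}_{3\pi/2}(\tilde\beta_\rho=\pi)-1\bigr)-c\,\mathbb{E}_{3\pi/2}\biggl[\int_0^\rho\sin^2(\tilde\beta_s)\,ds\biggr],
\end{displaymath}
converting the sign question into an explicit comparison between a scale-function quantity and an occupation-time integral, both of which are accessible through the same integrating factor $\phi$.
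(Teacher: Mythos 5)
Your reduction is set up correctly: the Dirichlet--Poisson identification of $h(x)=\mathbb{E}_x\bigl[\int_0^{\rho_{(\pi,2\pi)}}\cos(\tilde\beta_s)\,ds\bigr]$, the integrating factor $\phi$, the formula expressing $h(3\pi/2)$ as a positive multiple of $N_2M_1-N_1M_2$, the fact that $I<0$ on $(\pi,2\pi]$, the deduction of (b) from (a) via $\eta=2\pi-\tilde\beta$, and the pathwise identification of $\gamma$ with $\tilde\beta$ on the relevant excursions are all sound. But the entire content of the lemma is the strict inequality $N_2M_1<N_1M_2$, and that is precisely the step you do not prove. Moreover, the mechanism you propose for it provably fails: after reflecting the second half-interval onto $(\pi,3\pi/2)$, the symmetric part of the integrand of $N_2M_1-N_1M_2$ equals $-A\,D(y_1)-B\,D(y_2)+(A-B)\bigl(I(y_1)-I(y_2)\bigr)$ with $A=\phi(y_1)/\phi(y_2)$, $B=1/A$ and $D(y)=I(y)-I(3\pi-y)>0$; for $y_1<y_2$ the third term is strictly positive (since $\phi$ and $I$ are both decreasing on $(\pi,3\pi/2)$), and at the corner $(y_1,y_2)\to(\pi,3\pi/2)$ the whole expression tends to
\begin{displaymath}
\int_\pi^{3\pi/2}\bigl(-\cos w\bigr)\Bigl[e^{(2c/\sigma^2)(1+\cos w)}-e^{-(2c/\sigma^2)(1+\cos w)}\Bigr]\,dw\;>\;0.
\end{displaymath}
So the symmetrized integrand does not have a definite sign, and no rearrangement of your two pointwise inequalities can close the argument --- the obstacle you flagged is not a technicality but a genuine failure of that route. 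The It\^o fallback identity is correct, but it merely trades one strict inequality for another: since $\mathbb{P}_{3\pi/2}(\tilde\beta_\rho=\pi)=M_2/(M_1+M_2)>1/2$, the term $2\mathbb{P}_{3\pi/2}(\tilde\beta_\rho=\pi)-1$ is strictly positive and must still be dominated by $c\,\mathbb{E}_{3\pi/2}\int_0^\rho\sin^2(\tilde\beta_s)\,ds$; asserting that both sides are ``accessible'' through $\phi$ does not establish this comparison.

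For reference, the paper avoids the Green-function route entirely: it applies Fubini in time to reduce the claim to $\mathbb{E}_{3\pi/2}\bigl[\cos(\tilde\beta_t)\mathbf{1}_{\{\rho_{(\pi,2\pi)}>t\}}\bigr]<0$ for each fixed $t>0$, then uses Girsanov to rewrite this expectation under the Wiener measure with the density $e^{-\cos(B_t)}e^{-\frac12\int_0^t\cos(B_s)\,ds-\frac12\int_0^t\sin^2(B_s)\,ds}$, and finally compares the contributions of $\{B_t<3\pi/2\}$ and $\{B_t>3\pi/2\}$ by a symmetry argument on the excursion of $B$ straddling $t$ about the level $3\pi/2$. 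Your framework could in principle be completed (a first-order computation in $2c/\sigma^2$ does give $N_2M_1-N_1M_2<0$), but as written the decisive inequality is missing and the proposed way of obtaining it does not work.
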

\begin{proof}
We only prove assertion a) and in order to simplify the formulas we set $\sigma = 1$ and $c=1$. In order to prove the general case only notational changes are necessary. We have 
\begin{equation}
\begin{split}
\mathbb{E}_{3\pi/2}\biggl[\int_0^{\rho_{(\pi,2\pi)}}\cos (\tilde{\beta}_s)\,ds\biggr]&=\mathbb{E}_{3\pi/2}\biggl[\int_0^{\infty}\cos (\tilde{\beta}_s) \mathbf{1}_{ \lbrace \rho_{(\pi,2\pi)}>s\rbrace}\,ds\biggr]\\
&=\int_0^{\infty}\mathbb{E}_{3\pi/2}\bigl[\cos (\tilde{\beta}_s) \mathbf{1}_{ \lbrace \rho_{(\pi,2\pi)}>s\rbrace}\bigr]\,ds.
\end{split}
\end{equation}
Thus it is sufficient to show that $\mathbb{E}_{3\pi/2}\bigl[\cos (\tilde{\beta}_t) \mathbf{1}_{ \lbrace \rho_{(\pi,2\pi)}>t\rbrace}\bigr]<0$ for $t>0$. By Girsanov's theorem we have
\begin{equation}
\begin{split}
\mathbb{E}_{3\pi/2}\bigl[\cos (\tilde{\beta}_t) &\mathbf{1}_{ \lbrace \rho_{(\pi,2\pi)}>t\rbrace}\bigr]=\mathbb{E}_{3\pi/2}\bigl[\cos (B_t) e^{\int_0^t\sin(B_s)\,dB_s-\frac{1}{2}\int_0^t\sin^2(B_s)\,ds}\mathbf{1}_{ \lbrace \rho_{(\pi,2\pi)}>t\rbrace}\bigr] \\
&= e^{\cos(3\pi/2)}\mathbb{E}_{3\pi/2}\bigl[\cos (B_t) e^{-\cos(B_t)}e^{-\frac{1}{2}\int_0^t\cos(B_s)\,ds-\frac{1}{2}\int_0^t\sin^2(B_s)\,ds}\mathbf{1}_{ \lbrace \rho_{(\pi,2\pi)}>t\rbrace}\bigr],
\end{split}
\end{equation}
where we used, that according to the It\^o formula
\begin{displaymath}
-\cos(B_t)=-\cos(B_0)+\int_0^t\sin(B_s)\,dB_s + \frac{1}{2}\int_0^t\cos(B_s)\,ds.
\end{displaymath}
Finally by considering the excursion straddling $t$ and observing that the cosine is negative in $[\pi,3\pi/2)$ and positive in $[3\pi/2,2\pi)$ we conclude that 
\begin{equation}\label{e:symmetryargument}
\begin{split}
\mathbb{E}_{3\pi/2}\bigl[\cos (B_t) &e^{-\cos(B_t)}e^{-\frac{1}{2}\int_0^t\cos(B_s)\,ds-\frac{1}{2}\int_0^t\sin^2(B_s)\,ds}\mathbf{1}_{ \lbrace \rho_{(\pi,2\pi)}>t\rbrace}\bigr]\\
&= \mathbb{E}_{3\pi/2}\bigl[\cos (B_t) e^{-\cos(B_t)}e^{-\frac{1}{2}\int_0^t\cos(B_s)\,ds-\frac{1}{2}\int_0^t\sin^2(B_s)\,ds}\mathbf{1}_{ \lbrace \rho_{(\pi,2\pi)}>t\rbrace};B_t<3\pi/2\bigr]\\
&+\mathbb{E}_{3\pi/2}\bigl[\cos (B_t) e^{-\cos(B_t)}e^{-\frac{1}{2}\int_0^t\cos(B_s)\,ds-\frac{1}{2}\int_0^t\sin^2(B_s)\,ds}\mathbf{1}_{ \lbrace \rho_{(\pi,2\pi)}>t\rbrace};B_t>3\pi/2\bigr]\\
&= I_1 + I_2 <0.
\end{split}
\end{equation}
In order to see this note first that we are using the symmetry of the expression $\int_0^t \sin^2(B_s)\,ds$ in order to restrict ourselves to the expressions containing the cosine.  Then one should observe that the contribution of the path $(B_s)_{s \leq l(t)}$ with $l(t)= \sup \lbrace u \leq t \mid B_u = 3\pi/2\rbrace$ to the integrands in $I_1$ and $I_2$ in \eqref{e:symmetryargument} is the same and that therefore only the contribution of $(B_s)_{l(t)\leq s \leq t}$ makes the difference. This completes the proof of the assertions.
\end{proof}
Define $(S_n)_{n \geq 1}$ to be the random walk corresponding to the i.i.d sequence $(X_i)$, i.e. $S_n=\sum_{i=1}^nX_i$. Moreover let $T_-$ denote the time
\begin{displaymath}
T_-=\inf\bigl\lbrace n \geq 0 \mid S_n \leq R \bigr\rbrace.
\end{displaymath}
Similiarly, we define $(S_n)_{n\geq 1}$ to be the random walk defined by $S^{\gamma}_n=\sum_{i=1}^{n}X_i^{\gamma}$ and let $T^{\gamma}_-$ be the first hitting time of $[-\infty,R]$. 
We first note that our results imply the existence of exponential moments for $T_-$ and $T^{\gamma}_-$, respectively.
\begin{lemma}\label{l:exponRW}
Let $\nu$ denote an initial distribution on $[R,\infty]$ having exponentially decaying tails, i.e. there exists $c_0>0$ and $c_1 \in (0,1)$ such that $\nu((t,\infty)) \leq c_0\,c_1^t$. Then there exists a strictly positive $\lambda'$ depending only on $c_1$ such that 
\begin{displaymath}
\mathbb{E}_{\nu}\bigl[e^{\lambda' T_-}\bigr]<\infty\,\text{   and   }\,\mathbb{E}_{\nu}\bigl[e^{\lambda' T^{\gamma}_-}\bigr]<\infty
\end{displaymath}
\end{lemma}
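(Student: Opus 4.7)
The plan is to exploit the two properties of the i.i.d.\ sequence $(X_i)$ from the preceding lemma --- namely $\mathbb{E}_{\tilde{\beta}_0=\pi}[X_1]<0$ and $\mathbb{E}_{\tilde{\beta}_0=\pi}\bigl[e^{\lambda|X_1|}\bigr]<\infty$ for $\lambda<\tilde{\lambda}_0$ --- together with the exponential tail of $\nu$, in order to run a standard Chernoff/exponential-Markov argument on the hitting probabilities $\mathbb{P}_\nu(T_->n)$.

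The first step is to introduce the moment generating function $\phi(\theta):=\mathbb{E}_{\tilde{\beta}_0=\pi}\bigl[e^{\theta X_1}\bigr]$. By the exponential integrability of $|X_1|$, $\phi$ is finite and smooth in a neighbourhood of $0$; combined with $\phi(0)=1$ and $\phi'(0)=\mathbb{E}[X_1]<0$, this yields an interval $(0,\theta^{\ast})$ on which $\phi(\theta)<1$. Simultaneously, the assumption $\nu\bigl((t,\infty)\bigr)\leq c_0 c_1^{t}$ with $c_1\in(0,1)$ gives $\int e^{\theta x}\,\nu(dx)<\infty$ for every $\theta<\log(1/c_1)$. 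I would then fix a single $\theta_0>0$ small enough to satisfy both constraints.

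With such a $\theta_0$ chosen, the main step is the Chernoff estimate. Writing the walk with random start as $x+S_n$ with $x\sim\nu$, the inclusion $\{T_->n\}\subset\{x+S_n>R\}$ combined with Markov's inequality applied to $e^{\theta_0 S_n}$ gives
\begin{displaymath}
\mathbb{P}_\nu(T_->n)\leq \phi(\theta_0)^n\,e^{-\theta_0 R}\int e^{\theta_0 x}\,\nu(dx)=C\,\rho^n,\qquad \rho:=\phi(\theta_0)<1.
\end{displaymath}
A straightforward summation by parts then yields $\mathbb{E}_\nu\bigl[e^{\lambda' T_-}\bigr]<\infty$ for every $\lambda'\in\bigl(0,-\log\rho\bigr)$, which is precisely the required conclusion. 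Since for the fixed law of $X_1$ the only effective constraint on $\theta_0$ is $\theta_0<\log(1/c_1)$, the resulting admissible range of $\lambda'$ depends only on $c_1$, as claimed in the statement.

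The argument for $T_-^{\gamma}$ is literally the same, using the identical properties of $(X_i^{\gamma})$ established in the preceding lemma for the process $(\gamma_t)_{t\geq 0}$. The only point demanding care is the simultaneous fulfilment of $\phi(\theta_0)<1$ and $\theta_0<\log(1/c_1)$; since both conditions cut out non-empty open intervals to the right of the origin, this is automatic, and I do not expect any real obstacle beyond this bookkeeping.
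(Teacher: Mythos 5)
Your argument is correct, but it takes a genuinely different route from the paper. The paper treats the fixed-starting-point case as a black box, citing Heyde's theorem on first-passage times of random walks with negative drift to get $\mathbb{E}_{s_0}\bigl[e^{\bar{\lambda}T_-}\bigr]<\infty$ for a single $s_0>R$, and then handles the random initial distribution $\nu$ by a chaining argument: starting from $(k+1)s_0$ the walk must successively cross the levels $ks_0, (k-1)s_0,\dots$, which by the strong Markov property and monotonicity gives $\mathbb{E}_{(k+1)s_0}\bigl[e^{\lambda' T_-}\bigr]\leq \mathbb{E}_{s_0}\bigl[e^{\lambda' T_-}\bigr]^{k+1}$, and the sum over $k$ converges thanks to the tail bound $\nu((ks_0,(k+1)s_0))\leq c_0c_1^k$ once $\lambda'$ is small enough. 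You instead run a direct Chernoff bound on $\mathbb{P}_\nu(T_->n)\leq \mathbb{P}_\nu(x+S_n>R)$, which is self-contained (it effectively reproves the relevant part of Heyde's result from the two properties of $X_1$ established in the preceding lemma, namely $\mathbb{E}[X_1]<0$ and two-sided exponential moments below $\tilde{\lambda}_0$) and absorbs the random start in one stroke via the finiteness of $\int e^{\theta_0 x}\,\nu(dx)$ for $\theta_0<\log(1/c_1)$. Your version is arguably cleaner and makes more transparent how $c_1$ constrains the admissible $\lambda'$ (through the cap $\theta_0<\log(1/c_1)$ on the Chernoff parameter, hence on $-\log\phi(\theta_0)$); the paper's version avoids any computation with the moment generating function of $X_1$ at the price of invoking an external reference. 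In both arguments the stated dependence of $\lambda'$ ``only on $c_1$'' is to be read as: for the fixed law of $X_1$, the dependence on $\nu$ enters only through $c_1$.
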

\begin{proof}
By shifting the problem using translation invariance of the random walk we can assume that $R=0$. First we note that according to \cite{Hey} we have for every initial point $s_0>R=0$ 
\begin{displaymath}
\mathbb{E}_{s_0}\bigl[e^{\bar{\lambda} T_-}\bigr]<\infty
\end{displaymath}
for some $\bar{\lambda} > 0$. Let us fix such an initial point $s_0$ and $\bar{\lambda} \geq \lambda'>0$. Since the measure $\nu$ is supposed to have exponentially decaying tails we have that 
\begin{displaymath}
\forall k \geq 0:\nu((ks_0,(k+1)s_0))\leq c_0c_1^k.
\end{displaymath}  
Define the times $T_{ks_0}$ to be 
\begin{displaymath}
T_{ks_0}=\inf\bigl\lbrace n \geq 0 \mid  S_n\leq ks_0\bigr\rbrace
\end{displaymath}
and observe that when started at $(k+1)s_0$ the hitting time of the negative real numbers can be bounded by first waiting for $T_{ks_0}$, then starting at $ks_0$ we wait for $T_{(k-1)s_0}$ and so on. This gives
\begin{displaymath} 
\mathbb{E}_{(k+1)s_0}\bigl[e^{\lambda' T_-}\bigr] \leq \mathbb{E}_{s_0}\bigl[e^{\lambda' T_{-}}\bigr]^{k+1} 
\end{displaymath}
Therefore
\begin{displaymath}
\mathbb{E}_{\nu}\bigl[ e^{\lambda T_-}\bigr] \leq c_0 \mathbb{E}_{s_0}\bigl[e^{\lambda' T_{-}}\bigr]\,\sum_{k=0}^nc_1^k\,\mathbb{E}_{s_0}\bigl[e^{\lambda' T_{-}}\bigr]^{k} < \infty,
\end{displaymath}
if $\lambda' >0$ is taken to be small enough. Observe that the choice of $\lambda'$ depends only on $c_1$. The same arguments obviously show the assertion concerning $T^{\gamma}_-$. 
\end{proof}
We denote the stochastic process $\biggl(\int_0^t\cos(\tilde{\beta}_s)\,ds\biggr)_{t\geq 0}$ by $(\tilde{r}_t)$ and we set
\begin{displaymath}
\tilde{\tau}_R = \inf\bigl\lbrace t \geq 0 \mid \tilde{r}_t \leq R\bigr\rbrace.
\end{displaymath}
Moreover, $r^{\gamma}_t$ denotes $\int_0^t\cos(\gamma_s)\,ds$ and $\tau_R^{\gamma}$ is defined to be the first hitting time
\begin{displaymath}
{\tau}^{\gamma}_R = \inf\bigl\lbrace t \geq 0 \mid {r}^{\gamma}_t \leq R\bigr\rbrace.
\end{displaymath}
The assertion of Proposition \ref{p:expintegrability} for the simple case thus follows from the next Lemma.
\begin{lemma}\label{l:exponsimple}
Let $\nu$ denote some initial distribution on $[R,\infty)$ satisfying $\nu((t,\infty)) \leq c_0\,c_1^t$ for some $c_0>0$ and $c_1 \in (0,1)$. Then for some $\lambda > 0$ depending only on $c_1$ and we have
\begin{displaymath}
\mathbb{E}_{\nu}\bigl[e^{\lambda\tilde{ \tau}_R}\bigr] < \infty\,\text{   and   }\,\mathbb{E}_{\nu}\bigl[e^{\lambda \tau^{\gamma}_R}\bigr] < \infty
\end{displaymath}
\end{lemma}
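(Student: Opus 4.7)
My strategy is to estimate $\tilde{\tau}_R$ from above by the cycle-indexed time $\sigma^{T_-}$ at which the embedded random walk $S_n$ first crosses $R$, and then to bound $\mathbb{E}_\nu[e^{\lambda\sigma^{T_-}}]$ by combining the exponential moments of the individual cycle lengths $\ell_i=\sigma^i-\sigma^{i-1}$ (which follow from $\tilde{\lambda}_0>0$ in the previous lemma) with the exponential moments of $T_-$ (Lemma \ref{l:exponRW}). The treatment of $\tau_R^{\gamma}$ is identical after replacing $\tilde{\beta}$ by $\gamma$ throughout, so I will describe only the $\tilde{\beta}$-case.

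First, under $\mathbb{P}_{r_0}$ with $r_0\in[R,\infty)$, the path $t\mapsto \tilde r_t=r_0+\int_0^t\cos(\tilde{\beta}_s)\,ds$ is continuous and satisfies $\tilde r_{\sigma^{T_-}}=r_0+S_{T_-}\leq R$ while $\tilde r_0=r_0\geq R$. By the intermediate value theorem, $\tilde{\tau}_R\leq \sigma^{T_-}$ almost surely, so it suffices to estimate $\mathbb{E}_\nu[e^{\lambda\sigma^{T_-}}]$. Writing $\sigma^n=\sum_{i=1}^n\ell_i$ and decomposing on the value of $T_-$,
\begin{equation*}
\mathbb{E}_\nu\bigl[e^{\lambda\sigma^{T_-}}\bigr]
= \sum_{n=0}^{\infty}\mathbb{E}_\nu\Bigl[\mathbf{1}_{\{T_-=n\}}\prod_{i=1}^{n}e^{\lambda\ell_i}\Bigr].
\end{equation*}
The event $\{T_-=n\}$ is measurable with respect to $(X_1,\dots,X_n)$, which are not independent of $(\ell_1,\dots,\ell_n)$, so to decouple I apply Hölder's inequality with conjugate exponents $p,q>1$ (to be chosen later):
\begin{equation*}
\mathbb{E}_\nu\Bigl[\mathbf{1}_{\{T_-=n\}}\prod_{i=1}^{n}e^{\lambda\ell_i}\Bigr]
\leq \mathbb{P}_\nu(T_-=n)^{1/q}\,\Bigl(\mathbb{E}\bigl[e^{p\lambda\ell_1}\bigr]\Bigr)^{n/p},
\end{equation*}
using the i.i.d.\ structure of $(\ell_i)$ in the second factor.

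To finish, I choose $p$ so close to $1$ that $p\lambda<\tilde{\lambda}_0$; then $A(\lambda):=\mathbb{E}[e^{p\lambda\ell_1}]<\infty$ and $A(\lambda)\downarrow 1$ as $\lambda\downarrow 0$. By Markov's inequality applied to $e^{\lambda' T_-}$, Lemma \ref{l:exponRW} yields $\mathbb{P}_\nu(T_-=n)\leq K e^{-\lambda' n}$, where $K=\mathbb{E}_\nu[e^{\lambda' T_-}]<\infty$ depends only on $c_0,c_1$. Substituting gives
\begin{equation*}
\mathbb{E}_\nu\bigl[e^{\lambda\tilde{\tau}_R}\bigr]
\leq K^{1/q}\sum_{n=0}^{\infty}\bigl(A(\lambda)^{1/p}\,e^{-\lambda'/q}\bigr)^{n},
\end{equation*}
and the geometric series converges once $\lambda>0$ is taken small enough that $A(\lambda)^{1/p}<e^{\lambda'/q}$, which is possible by continuity of $A$ at $0$.

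The main obstacle is exactly the correlation between the cycle lengths $\ell_i$ and the displacements $X_i$: within a single cycle a larger excursion of $\tilde{\beta}$ typically means both a longer time and a larger $|X_i|$, so one cannot directly apply Wald's identity. The Hölder split above is the key device that decouples these two ingredients at the cost of slightly inflating the exponent inside $A$, which is harmless because we have the strict gap $\lambda<\tilde{\lambda}_0$ to spare. A secondary technicality is that the cycles are defined starting from $\tilde{\beta}_0=\pi$; for a general initial angle, one uses the strong Markov property together with the fact that the hitting time of $\pi$ by $\tilde{\beta}$ has an exponential moment (again by $\tilde{\lambda}_0>0$) to reduce to this case, absorbing the extra waiting time into $K$.
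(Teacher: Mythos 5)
Your proposal follows the same route as the paper: bound $\tilde{\tau}_R$ by the completion time $\sigma^{T_-}$ of the first cycle at which the embedded random walk drops below $R$, then combine the exponential moments of the cycle durations (available for $\lambda<\tilde{\lambda}_0$) with the exponential tail of $T_-$ from Lemma \ref{l:exponRW}. In fact your write-up is more careful than the paper's at the decisive step: the displayed identity \eqref{e:expontau} in the paper is written as an exact product formula, which tacitly treats the event $\{T_-=n\}$ as independent of the accumulated cycle time $\sigma^n$ — and, as you correctly observe, the displacement $X_i$ and the duration $\ell_i=\sigma^i-\sigma^{i-1}$ of a given cycle are dependent. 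Your H\"older decoupling, $\mathbb{E}[\mathbf{1}_{\{T_-=n\}}e^{\lambda\sigma^n}]\leq \mathbb{P}(T_-=n)^{1/q}\,\mathbb{E}[e^{p\lambda\ell_1}]^{n/p}$, closes exactly this gap at the harmless cost of shrinking $\lambda$, and the remaining bookkeeping (geometric series, reduction to $\tilde{\beta}_0=\pi$ via the strong Markov property) is correct. So the proposal is valid and, modulo this repair, coincides with the paper's argument.
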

\begin{proof}
According to the construction of the random walk $(S_n)_{n \in \mathbb{N}}$ we conclude that 
\begin{displaymath}
T_{[-\infty,R]} = n \,\text{   implies   }\,\tau_R \text{   happened before completing cycle  } n.
\end{displaymath}
whenever the random walk $S_n$ and the process $r_t$ are started from the same initial distribution. This means that 
\begin{equation}\label{e:expontau}
\mathbb{E}_{\nu}\bigl[e^{\lambda \tau_R}\bigr]  = \sum_{n=1}^{\infty}\mathbb{P}_{\nu}\bigl(T_- = n\bigr)\mathbb{E}_{\nu}\bigl[e^{\lambda \tau_R}\bigr] ^n.
\end{equation}
Using the fact that according to Lemma \ref{l:exponRW} the sequence $(\mathbb{P}_{\nu}\bigl(T_- = n\bigr))_{n \in \mathbb{N}}$ decays exponentially we can choose $\lambda>0$ such that the right hand side of \eqref{e:expontau} remains finite. The same proof applies to the second assertion. 
\end{proof}
\begin{Corollary}
Assume that outside a compact set $\varphi$ is radial and that outside this compact set $b(r)=c+1/r$. Then the assertion of Theorem $\ref{t:main}$ holds true.
\end{Corollary}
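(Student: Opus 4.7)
The plan is essentially a direct assembly of the ingredients established in this section: the corollary should follow by checking the hypotheses of Theorem~\ref{t:main} and observing that Proposition~\ref{p:expintegrability}, in this special case, is precisely what Lemma~\ref{l:exponsimple} delivers.

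First I would verify the hypotheses of Theorem~\ref{t:main}. Since $\varphi \in C^2(\mathbb{R}^2)$ is eventually radial by assumption and $b(r) - 1/r = c > 0$ for $r$ outside the given compact set, Assumption~A) holds trivially with the same constant $c$. Thus the only ingredient still needed in order to invoke Theorem~\ref{t:main} is the exponential integrability of the hitting time $\tau_R$ asserted in Proposition~\ref{p:expintegrability}.

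Next I would identify the $\beta$-equation with the comparison SDE. Outside the compact set on which $\varphi$ is radial, the second line of \eqref{e:radialSDE} reduces exactly to
\begin{displaymath}
d\beta_t = c\sin(\beta_t)\,dt + \sigma\,dB_t,
\end{displaymath}
which is precisely the SDE satisfied by $(\tilde{\beta}_t)$ in the Special Case subsection. As already observed there, the processes $(\beta_t, r_t)$ and $(\tilde{\beta}_t, \tilde{r}_t)$ coincide almost surely up to the first hitting time of $\mathbb{S}_1 \times [0,R]$ when started from the same initial condition, so $\tau_R = \tilde{\tau}_R$ a.s. Applying Lemma~\ref{l:exponsimple} with initial distribution $\nu = \delta_{r_0}$, which trivially has exponentially decaying tails, yields the existence of $\lambda > 0$ with $\mathbb{E}_{(r_0, \beta_0)}[e^{\lambda \tau_R}] < \infty$ whenever $r_0 > R$; the case $r_0 \leq R$ is immediate since then $\tau_R = 0$. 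This establishes Proposition~\ref{p:expintegrability} in the present setting, and Theorem~\ref{t:main} then delivers the desired geometric ergodicity.

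The only mildly nontrivial point, and the closest thing I see to an obstacle, is that Lemma~\ref{l:exponsimple} is formulated through the cycle construction for $(\tilde{\beta}_t)$ starting at $\pi$, whereas the initial angle $\beta_0$ in Proposition~\ref{p:expintegrability} is arbitrary. To close this gap I would argue that the initial segment, up to the first visit of $\beta$ to $\pi$, itself has finite exponential moments: since $\beta$ lives on the compact circle and is driven by a uniformly nondegenerate Brownian motion, this is a standard exit-time estimate and the extra segment can be absorbed into the multiplicative constant without affecting the conclusion. I do not expect any serious difficulty here, since all of the substantive work has already been carried out in the preceding lemmas.
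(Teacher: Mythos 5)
Your route is the same as the paper's: Assumption A) holds with the given $c$, outside the compact set the $(\beta_t,r_t)$-dynamics coincide with the special-case pair $(\tilde{\beta}_t,\tilde{r}_t)$, and the whole content of the corollary is the verification of Proposition~\ref{p:expintegrability} via Lemma~\ref{l:exponsimple}. The one genuine gap is exactly the point you flag at the end, and your proposed fix does not close it. Waiting for the first visit $T_\pi$ of $\beta$ to $\pi$ costs more than an ``extra time segment absorbed into a multiplicative constant'': during $[0,T_\pi]$ the radial coordinate moves, so at time $T_\pi$ the process restarts from the \emph{random} radius $r_{T_\pi}$, and the quantity $\mathbb{E}_{(r,\pi)}\bigl[e^{\lambda\tau_R}\bigr]$ that the strong Markov property produces is not uniformly bounded in $r$ --- by the chaining argument in Lemma~\ref{l:exponRW} it grows roughly geometrically in $r$. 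Applying Lemma~\ref{l:exponsimple} only with $\nu=\delta_{r_0}$ therefore does not suffice for $\beta_0\neq\pi$.

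The paper's resolution --- and the reason Lemmas~\ref{l:exponRW} and~\ref{l:exponsimple} are stated for a general initial law $\nu$ with exponentially decaying tails rather than for point masses --- is the following: since $\bigl|\tfrac{d}{dt}r_t\bigr|=|\cos(\beta_t)|\leq 1$ one has $r_{T_\pi}\leq r_0+T_\pi$, and $T_\pi$ has exponential moments (this is your ``standard exit-time estimate'', which is correct), so the law $\nu_{\beta_0,r_0}$ of $r_{T_\pi}$ satisfies $\nu_{\beta_0,r_0}((l,\infty))\leq c_0\,c_1^{\,l}$ with $c_1\in(0,1)$ independent of $(r_0,\beta_0)$ and only $c_0$ depending on the starting point. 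Then $\mathbb{E}_{(r_0,\beta_0)}\bigl[e^{\lambda\tau_R}\bigr]$ is controlled by $\mathbb{E}_{(\nu_{\beta_0,r_0},\pi)}\bigl[e^{\lambda\tau_R}\bigr]$, and Lemma~\ref{l:exponsimple} applied with this $\nu$ produces a $\lambda>0$ depending only on $c_1$, hence valid for every starting point simultaneously. With this replacement of your last paragraph the argument is complete and coincides with the paper's proof.
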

\begin{proof}
It remains to show that suitable sufficiently large $R>0$ and every $(r_0,\beta_0) \in (0,\infty) \times \mathbb{S}_1$ we have $\mathbb{E}_{(r_0,\beta_0)}\bigl[e^{\tau_R}\bigr]<\infty$. Observe that by the strong Markov property we have
\begin{displaymath}
\mathbb{E}_{(r_0,\beta_0)}\bigl[e^{\lambda \tau_R}\bigr] \leq \mathbb{E}_{(\nu_{\beta_0,r_0},\pi)}\bigl[e^{\lambda \tau_R}\bigr],
\end{displaymath}
where $\nu_{\beta_0,r_0}$ is some distribution on $(R,\infty)$. Moreover, as the tails of $\nu_{\beta_0,r_0}$ can be dominated by the tails of the distribution of the first hitting time of $\pi$ by the process $(\beta_t)_{t \geq 0}$ we conclude that there exists a constant $c_0=c_{0,\beta_0,r_0}>0$ (possibly depending on $\beta_0$ and $r_0$) and a constant $c_1 \in (0,1)$, which does not depend on $\beta_0$ and $r_0$ such that for $l\geq R$
\begin{displaymath}
 \nu_{\beta_0,r_0}\bigl((l,\infty)) \leq c_0 \, c_1^l.
\end{displaymath}
Therefore there exists some $\lambda>0$ such that for every pair $(\beta_0,r_0)$ we have $\mathbb{E}_{(r_0,\beta_0)}\bigl[e^{\lambda \tau_R}\bigr] < \infty$, showing that Proposition \ref{p:expintegrability} is true in this special situation and thus Theorem \ref{t:main} as well. 
\end{proof}
\begin{remark}
The analysis of this special case demonstrates the dynamical properties of the fiber lay down process in a rather clean way. In order to extract the net-'drift' towards $0$ of the radial component one has to wait until a full cycle is completed. Then each completed cycle can be used as an i.i.d. increment in a random walk which has then negative drift. The identification of the possible translation of the diffusion hitting time problem to a random walk problem constitutes the essential insight.  
\end{remark}
\subsection{General case}
Recall that by definition the process $(\gamma_t)_t$ starts from $\pi$ as a reflected Brownian motion until it reaches $\pi/2$ or $3\pi/2$, then it behaves as the solution of $dX_t=dB_t +c\sin(X_s)\,ds =: dB_t +\delta(X_s)\,ds$ until it hits $\pi$ or $0=2\pi$, from there it starts as a reflected Brownian motion at $2\pi$ until it hits $\pi$ and repeats this behavior. We have seen, that with $r^{\gamma}_t=\int_0^t\cos(\gamma_s)\,ds$ and 
\begin{displaymath}
\tau_R^{\gamma}=\inf \lbrace t > 0 \mid r^{\gamma}_t \leq R \rbrace
\end{displaymath}
we have 
\begin{displaymath}
-\lim_{t \rightarrow \infty}\frac{1}{t}\log \mathbb{P}_{\nu}\bigl(\tau^{\gamma}_R>t \bigr)>0
\end{displaymath}
for every initial distribution $\nu$ on $(0,\infty)$ having exponential moments. The main goal in this subsection consists in proving that it is possible to compare the general case with the process $(\gamma_t)$.
\subsubsection{A Coupling Construction} We are now going to construct a process $(\gamma_t)_{t \geq 0}$ in a way, which allows a comparison to the process $(\beta_t)_{t \geq 0}$. More precisely, $(\gamma_t)_t$ is constructed in such a way that 
\begin{displaymath}
r^{\gamma}_t=\int_0^t\cos(\gamma_s)\,ds \geq r_t.
\end{displaymath}
We want to construct the coupling for one complete cycle of $(\gamma_t)_t$, i.e. we want to construct such a coupling up to the time $\sigma^1$.  The idea is easy to grasp, but as often with coupling constructions, the formal presentation tends to hide the simple underlying intuition.  The idea consists essentially in a comparison of the process $\beta$ with a process which has a weaker drift to $\pi$ and which thus will have a bigger distance to $\pi$. Then one can conclude using the properties of the cosine that the radial process $r$ will be smaller than the corresponding 'radial component' of the comparison process. 

The idea is the following: We use one of the following rules.
\begin{itemize}
\item If the processes $(\gamma_t)_{t \geq 0}$ and $(\beta_t)_{t\geq 0}$ are at a given time both at the point $\pi$ or at $2\pi$ then we use Lemma \ref{l:tanaka} and Lemma \ref{l:tanaka2}, respectively in order to make sure that the distance of the $\gamma$-process to $\pi$ dominated the distance of the $\beta$-process to $\pi$.
\item  If the $\beta$-process is at the point $\pi$ and the $\gamma$-process is in $(\pi,2\pi]$ we wait until the distance of the $\gamma$-process to $\pi$ coincides with the distance of the $\beta$-process to $\pi$. At this time either both processes are in $\pi$ or one is in the situation of the following item.  The same procedure is applied in the case, where the $\gamma$-process is at $2\pi$ and the distance of the $\beta$-process to $\pi$ is strictly smaller than $\pi$.
\item If at a time $\tau$ the $\gamma$-process is in the interior of $(\pi, 2\pi)$ and the $\beta$-process is in $(0,\pi)\cup(\pi,2\pi)$ with $|\beta_{\tau}-\pi| \leq |\gamma_{\tau}-\pi|$ then one can use Theorem \ref{t:IW} in order to conclude that distance of the $\gamma$-process to $\pi$ remains dominated the distance of the $\beta$-process to $\pi$, at least until one hits the boundary and repeats the procedure outlined in the second item. If the processes $\gamma$ and $\beta$ are both in $(\pi,2\pi)$ then the application of Theorem \ref{t:IW} is direct, if the process $\beta$ is in $(0,\pi)$ and the process $\gamma$ is in $(\pi,2\pi)$ then one can use the symmetry properties of the drift of the $\gamma$-process in order to conclude necessary domination of distance of the $\beta$-process to $\pi$ by the distance of the $\gamma$-process to $\pi$ via Theorem \ref{t:IW}.
\end{itemize}
Let us give now the first steps in the construction on a more formal level.
Let $\nu$ be an initial distribution on $[R,\infty)$ having exponentially decaying tails and let $\xi \sim \nu$ be a random variable which is distributed according to $\nu$.  
\begin{itemize}
\item[a)] We start our process $(\beta_t,r_t)_t$ with initial conditions $r_0 = \xi$ and $\beta_0=\pi$. According to Lemma \ref{l:tanaka} there is a Brownian motion $(R_t)_{t\geq 0}$ started in $\pi$ and reflected at $\pi$ such that the distance of $\beta_t$ to $\pi$ is smaller than $R_t$  and we wait up to time $\rho_1$ when the process $(\gamma_t)_{t \geq 0}=(R_t)_{t \geq 0}$ hits the point $3\pi/2$. 
\item[b)]  For $ t \geq \rho_1$ we proceed as follows. On the probability space supporting the process $(r_t,\beta_t)_{t\geq 0}$ we can solve the stochastic differential equation $d\gamma_t=dB_t+c\sin(\gamma_t)\,dt$, where $(B_t)_{t\geq 0}$ is the Brownian motion driving the process  $(r_t,\beta_t)_{t\geq 0}$. Observe that the solution $(\gamma_t)_{t\geq 0}$ is pathwise unique. We wait up to the random time $\rho_2 = \rho_{2,1} \wedge \rho_{2,2}$, where 
\begin{displaymath}
\rho_{2,1} = \inf \lbrace t \mid \gamma_t = 2\pi\rbrace\,\text{   and    }\,\rho_{2,2} = \inf\lbrace t \mid \gamma_t - \pi = |\beta_t - \pi|\rbrace.
\end{displaymath}
Observe that up to the stopping time $\rho_2$ the distance of $\beta$ to $\pi$ does not exceed the distance of $\gamma$ to $\pi$.
\item[c)] If $\rho_2 = \rho_{2,1}$ then we extend $(\gamma_t)_t$ as a Brownian motion $(R^1_t)_{t \geq 0}$ at $2\pi$ which is reflected at $2\pi$ and wait until $\rho_{3,1}=\rho_{3,1,1}\wedge \rho_{3,1,2}$, where 
\begin{displaymath}
\rho_{3,1,1}= \inf\lbrace t \mid R^1_t = \pi \rbrace\,\text{   and   }\,\rho_{3,1,2}=\inf \lbrace t \mid R^1_t -\pi=| \beta_t - \pi|\rbrace.  
\end{displaymath}
If $\rho_2 = \rho_{2,2}$ then we extend the comparison process $(\gamma_t)_t$ as a reflecting Brownian motion $R^2_t$ if $\gamma_{\rho_{2,2}-} \in \lbrace 0 , 2\pi \rbrace$ or let it behave as a solution to $d\gamma_t=dB_t+c\sin(\gamma_t)\,dt$, and we wait up to the time $\rho_{3,2}=\rho_{3,2,1} \wedge \rho_{3,2,2}$, where  
\begin{equation*}
\begin{split}
\rho_{3,2,1} &= \inf\lbrace t \mid \gamma_t \in \lbrace 0, 2\pi \rbrace \text{  or  }R^2_t = \pi\rbrace\, \text{   and   }\\
\rho_{3,2,2}&=\inf\lbrace t\mid R^1_t -\pi=| \beta_t - \pi|\,(\gamma_t -\pi =| \beta_t - \pi|)\rbrace
\end{split}
\end{equation*}
meaning that we wait if we hit $\pi$ in the case that we move according to a reflected Brownian motion or $0,2\pi$ otherwise or alternatively until the comparison process has the same distance to $\pi$ as the process $\beta$ .

Observe once again that due to Theorem \ref{t:IW} we can conclude that the distance of the $\beta$-process to $\pi$ does not exceed the distance of the $\gamma$-process to $\pi$.  

Let us set $\rho_3=\rho_{3,1} \wedge \rho_{3,2}$.
\item[d)] At time $\rho_3$ there are three possibilities: If the $\beta$-process is at the boundary and the $\gamma$-process is in the interior, then we wait until either the distance of both processes to $\pi$ coincides or if the $\gamma$ hits the boundary. If both processes are at the boundary we can make use of our Lemmas and wait until the $\gamma$-process hits $\pi$ and if both have the same distance to $\pi$ we can use Theorem \ref{t:IW} until one process hits the boundary.  

In all cases the distance of the $\gamma$-process to $\pi$ dominates the distance of the $\beta$-process to $\pi$. Then this procedure is repeated in the same way. 

\end{itemize}
We emphasize, that the items a) -- d) do not allow to construct the coupling for all times.
Thus we have to observe that by construction of the coupling the process $(\gamma_t)_{t \geq 0}$ is always between the process $(\beta_t)_{t\geq 0}$ and the point $2\pi$ and we know, that the time $\sigma^1$ is finite for $\gamma_t$ and $\gamma_{\sigma^1}=\pi$. Therefore at time $\sigma^1$ we have $\beta_{\sigma^1}=\gamma_{\sigma^1}=\pi$ and we can start  again from item a) replacing $r_0$ by $r_{\sigma^1}$. 

Iteratively we obtain two processes: the process $(r_t,\beta_t)_{t \geq 0}$ and $(\gamma_t)_{t \geq 0}$ such that for all $t\geq 0$ we have 
\begin{displaymath}
\forall t \geq 0 : |\pi - \beta_t|\leq |\pi -\gamma_t|.
\end{displaymath}
Using this together with the fact that $[\pi, 2\pi] \ni l \mapsto \cos(|l|)$ is increasing it then becomes clear that $$r^{\gamma}_t \geq r_t.$$ Finally we conclude that 
\begin{displaymath}
\tau_R=\inf\lbrace t > 0 \mid r_t \leq R\bigr\rbrace \leq \tau^{\gamma}_R=\inf\lbrace t\geq 0 \mid r^{\gamma}_t\leq R\rbrace
\end{displaymath}
and since according to Lemma \ref{l:exponsimple} the random variable $\tau^{\gamma}_R$ has exponentially decaying tails the proof Proposition \ref{p:expintegrability} is completed.
\begin{remark}\label{r:final}
We would like to end with the following observation : The gradient form of the drift in \eqref{i:SDE} was dictated by the aim to have an explicit candidate for the invariant distribution. The apriori knowledge of the existence of an invariant distribution was used in Theorem \ref{t:mixing} as an essential ingredient. We want to emphasize that this is not true for Theorem \ref{t:main}.  For fixed $T>0$ the analog of the function $V_R$, defined in \eqref{e:Lfunction}, satisfies (see Theorem 6.2 in \cite{DMT95})
\begin{displaymath}
P_sV_R  \leq \lambda(s)V_R + b\,\mathbf{1}_{\overline{B(0,R)}\times [0,2\pi]},
\end{displaymath}
where $\lambda(s)$ is bounded for $s \in (0,T]$ with $\lambda(T)<1$ and $b \in (0,\infty)$. Therefore by Theorem 2.1 of \cite{DMT95} we can conclude the existence of an invariant distribution for the fiber lay down process. 

Thus one might hope that the approach presented in this work is also applicable to the case of a moving conveyer belt. A detailed investigation of this is deferred to the future. 
\end{remark}
\section*{Appendix}
In this section we recall Theorem 1.1 from chapter VI \cite{I-WBook}. Suppose we have two real continuous functions $b_1$ and $b_2$ defined on $[0,\infty)\times \mathbb{R}$ such that 
\begin{equation}\label{e:strict}
\forall t\geq 0,\,x\in\mathbb{R} : b_1(t,x)<b_2(t,x)
\end{equation}
and let $(\Omega,\mathcal{F},\mathbb{P})$ be a probability measure with a reference familiy $(\mathcal{F}_t)_{t\geq 0}$. 
\begin{theorem}\label{t:IW}
Suppose that we are given the following processes:
\begin{itemize}
\item[1)] two real $(\mathcal{F})_{t \geq 0}$-adapted continuous processes $x_1(t,\omega)$ and $x_2(t,\omega)$
\item[2)] a one-dimensional $(\mathcal{F}_t)_{t \geq 0}$-Brownian motion $B(t,\omega)$ such that $B_0=0$ a.s..
\item[3)] two real $(\mathcal{F}_t)_{t \geq 0}$-adapted well-measurable processes $\beta_1(t,\omega)$ and $\beta_2(t,\omega)$. 
\end{itemize}
We assume that they satisfy the following conditions with probability one:
\begin{equation}
x_i(t)-x_i(0)= B(t) + \int_0^t\beta_i(s)\,ds,\,i=1,2,
\end{equation}
\begin{equation}
x_1(0)\leq x_2(0)
\end{equation}
\begin{equation}
\beta_1(t)\leq b_1(t,x_1(t))\,\text{   for every $t\geq 0$}
\end{equation}
\begin{equation}
\beta_2(t)\geq b_2(t,x_2(t))\,\text{   for every $t\geq 0$}.
\end{equation}
Then with probability one, we have 
\begin{equation}\label{e:conclusion}
x_1(t) \leq x_2(t)\,\text{   for every $t\geq 0$}.
\end{equation}
Furthermore, if the pathwise uniqueness holds for at least one of the following stochastic differential equations
\begin{equation}
dX(t)=dB(t) + b_i(t,X(t))\,dt,\,i=1,2,
\end{equation}
then the same conclusion \eqref{e:conclusion} holds if \eqref{e:strict} is replaced by the weakened condition
\begin{equation}
b_1(t,x)\leq b_2(t,x)\,\text{  for  $t\geq0,\,x \in \mathbb{R}$}.
\end{equation}
\end{theorem}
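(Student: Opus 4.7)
The plan is to exploit the fact that $x_1$ and $x_2$ are driven by the \emph{same} Brownian motion $B$, so that their difference
\begin{displaymath}
D_t := x_1(t)-x_2(t) = \bigl(x_1(0)-x_2(0)\bigr) + \int_0^t\bigl(\beta_1(s)-\beta_2(s)\bigr)\,ds
\end{displaymath}
has no martingale part and is in fact absolutely continuous in $t$. This is the decisive observation: the claim reduces to a deterministic pathwise comparison of continuous functions, and no local time or It\^o correction ever enters the argument.

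For the strict case $b_1<b_2$ I would argue pathwise by contradiction. Fix a sample path along which $D_{t_0}>0$ for some $t_0>0$, and set
\begin{displaymath}
\tau := \sup\bigl\lbrace s\in [0,t_0] : D_s\leq 0\bigr\rbrace.
\end{displaymath}
From $D_0\leq 0$ and continuity, $\tau<t_0$, $D_\tau=0$, and $D_s>0$ (equivalently $x_1(s)>x_2(s)$) on $(\tau,t_0]$. Setting $\bar x:=x_1(\tau)=x_2(\tau)$, the hypotheses yield
\begin{displaymath}
\beta_1(s)-\beta_2(s)\leq b_1(s,x_1(s))-b_2(s,x_2(s))\longrightarrow b_1(\tau,\bar x)-b_2(\tau,\bar x)<0\,\text{  as  }\,s\downarrow\tau,
\end{displaymath}
by continuity of $b_1,b_2,x_1,x_2$ together with the strict inequality. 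Hence there exist $\varepsilon,\delta>0$ with $\beta_1(s)-\beta_2(s)\leq -\varepsilon$ for $s\in(\tau,\tau+\delta)$, and integrating from $\tau$ produces $D_s\leq -\varepsilon(s-\tau)<0$ on that interval, contradicting $D_s>0$. Hence $D_t\leq 0$ for every $t\geq 0$ almost surely.

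To extend to $b_1\leq b_2$ under the pathwise-uniqueness hypothesis I would employ a standard approximation. Assuming pathwise uniqueness for the SDE with drift $b_2$, set $b_2^{(n)}:=b_2+\tfrac{1}{n}$, so that $b_1<b_2^{(n)}$ strictly, and let $X^{(n)}$ solve $dX^{(n)}=dB+b_2^{(n)}(t,X^{(n)})\,dt$ with $X^{(n)}(0)=x_2(0)$ on the given probability space. The strict case just established applies to the pair $(x_1,X^{(n)})$ and yields $x_1(t)\leq X^{(n)}(t)$ almost surely for every $n$ and every $t$. The sequence $X^{(n)}$ converges locally uniformly in probability to a continuous limit which, by continuity of $b_2$ and pathwise uniqueness, can be identified with the unique solution $X$ of $dX=dB+b_2(t,X)\,dt$ starting at $x_2(0)$; a companion comparison between $X$ and $x_2$, available because $\beta_2\geq b_2$, then yields $X\leq x_2$ and closes the argument.

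The hard part is unquestionably this final limiting step: ruling out spurious oscillation of the approximating solutions, identifying the limit unambiguously and relating it to $x_2$ is exactly what the pathwise-uniqueness assumption supplies, and it cannot be avoided. By contrast, the strict case itself, once the martingale-free structure of $D_t$ is recognised, is genuinely elementary.
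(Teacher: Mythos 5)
A remark on the comparison itself: the paper does not prove this statement --- it is quoted verbatim in the Appendix as Theorem 1.1 of Chapter VI of \cite{I-WBook} --- so your attempt can only be measured against the standard proof of that theorem. Your treatment of the strict case $b_1<b_2$ is correct and is essentially that standard proof: since $x_1$ and $x_2$ carry the \emph{same} Brownian motion, $D_t=x_1(t)-x_2(t)$ is absolutely continuous, and the last-exit-time contradiction at $\tau$ goes through. (Minor slip of phrasing only: it is the continuous majorant $b_1(s,x_1(s))-b_2(s,x_2(s))$ that converges as $s\downarrow\tau$, not $\beta_1-\beta_2$ itself, but the inequality you actually integrate is the right one.)

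The second half, however, contains a genuine gap, and it sits exactly where you say the difficulty is. The closing step --- ``a companion comparison between $X$ and $x_2$, available because $\beta_2\geq b_2$, yields $X\leq x_2$'' --- is circular: $X$ has drift exactly $b_2(t,X(t))$ while $x_2$ has drift $\geq b_2(t,x_2(t))$, so this is precisely an instance of the non-strict comparison you are in the middle of proving (the case $b_1=b_2$), and the strict case cannot deliver it. Nor can the strict case order $X^{(n)}$ against $x_2$ beforehand: $X^{(n)}$ has the \emph{larger} drift $b_2+\tfrac1n$, and you have no upper bound on $\beta_2$. The correct use of pathwise uniqueness is a two-sided squeeze of the one equation for which it is assumed. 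Keep your $X^{(n)}$ (drift $b_2+\tfrac1n$, started at $x_2(0)$), which by the strict case dominates $x_1$ and decreases in $n$ to a solution $X$ of $dX=dB+b_2(t,X)\,dt$ from $x_2(0)$; but also introduce $\check X^{(n)}$ with drift $b_2-\tfrac1n$ and the same initial value. The strict case gives $\check X^{(n)}\leq x_2$, the $\check X^{(n)}$ increase to another solution $\check X$ of the \emph{same} SDE with the \emph{same} Brownian motion and initial value, and it is pathwise uniqueness that identifies $X=\check X$, whence $x_1\leq X=\check X\leq x_2$. (If uniqueness is assumed for the equation with $b_1$ instead, the symmetric squeeze $b_1\pm\tfrac1n$ started at $x_1(0)$ does the job.) In your sketch, uniqueness is invoked only to ``identify the limit as the solution,'' which does not use it and does not connect the limit to $x_2$.

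A secondary technical gap: you need $X^{(n)}$ (and $\check X^{(n)}$) to exist as solutions \emph{adapted to the given Brownian motion on the given space}. Continuity of $b_2\pm\tfrac1n$ gives only weak existence, and pathwise uniqueness assumed for the drift $b_2$ does not transfer to the shifted drift, so Yamada--Watanabe is not directly available. One has to interpose, for instance, locally Lipschitz approximants squeezed strictly between $b_2$ and $b_2\pm\tfrac1n$, for which strong solutions exist, and run the monotone limit through those.
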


\section*{Acknowledgements}
The first named author would like to thank Vitali Wachtel (Ludwig-Maximilians University Munich) for very useful advice and guidance concerning the theory of random walks as well as Martin Grothaus (University of Kaiserslautern) for introducing him to the process considered in this work as well as stimulating discussions.


\begin{thebibliography}{99}
\bibitem{BGKMW07} L. Bonilla, T. G\"otz, A. Klar, N. Marheineke, and R. Wegener, \textit{Hydrodynamic limit of the Fokker--Planck equation describing fiber lay-down processes}, SIAM J. Appl. Math., \textbf{68} (2007), 648--665.
\bibitem{DMT95} D. Down, S. P. Meyn and R. L. Tweedie, \textit{Exponential and Uniform Ergodicity of Markov Processes}, Ann. Probab. \textbf{23} (1995), 1671--1691
\bibitem{GN11} M. Gerlach and R. Nittka, \textit{A new proof of Doob's theorem}, to appear in J. Math. Anal. Appl.
\bibitem{GKMW07} T. G\"otz, A. Klar, N. Marheineke, and R. Wegener, \textit{A stochastic model and associated Fokker--Planck equation for the fiber lay-down process in nonwoven production processes}, SIAM J. Appl. Math., \textbf{67} (2007), 1704--1717
\bibitem{Gre82} G. Greiner, \textit{Spektrum und Asymptotik stark stetiger Halbgruppen positiver Operatoren}, Sitzungsber. Heidelb. Akad. Wiss. Math.-Natur. Kl. (1982), 55--80
\bibitem{GK} M. Grothaus and A. Klar, \textit{Ergodicity and rate of convergence for a non-sectorial fiber lay-down process}, SIAM J. Math. Anal. \textbf{40} (2008), 968--983
\bibitem{I-R} I. Ignatiouk-Robert, \textit{On the spectrum of Markov semigroups via sample path large deviations}. Probab. Theory Relat. Fields \textbf{134} (2006), no. 1, 44--80. 
\bibitem{I-WBook} N. Ikeda and S. Watanabe, Stochastic Differential Equations and Diffusion Processes, North-Holland Mathematical Library, North-Holland Publishing Company, 1981
\bibitem{KalBook} O. Kallenberg, Foundations of Modern Probability, Probability and Its Applications, Springer, 2002
\bibitem{KMW} A. Klar, N. Marheineke and R. Wegener. \textit{Hierarchy of Mathematical Models for Production Processes of Technical Textiles}, ZAMM, \textbf{89} (2009), 941--961
\bibitem{MW06} N. Marheineke and R. Wegener, \textit{Fiber dynamics in turbulent flows: General modeling
framework}, SIAM J. Appl. Math., \textbf{66} (2006), pp. 1703--1726.
\bibitem{MS00} B. Maslowski and J. Seidler, \textit{Probabilistic approach to the strong Feller Property}, Probab. Theory Relat. Fields \textbf{118} (2000), 287--210
\bibitem{MPBook} P. M\"orters and Y. Peres, Brownian Motion, Cambridge Series in Statistical and Probabilistic Mathematics, Cambridge University Pres, 2010
\bibitem{H} H. Hennion, \textit{Quasi-compactness and absolutely continuous kernels}, Probab. Theory Relat. Fields \textbf{139} (2007), 451--471 
\bibitem{Hey} C. C. Heyde, \textit{Two probability theorems and their application to some first passage problems}, J. Austral. Math. Soc. \textbf{4} (1964) 214--222. 
\bibitem{MTBook} S. Meyn and R. L. Tweedie, Markov Chains and Stochastic Stability, Cambridge University Press, 2009
\bibitem{MT93a}  S. Meyn and R. L. Tweedie, \textit{Stability of Marvovian Processes II: Continuous-Time Processes and Sampled Chains}, Advances in Applied Probability \textbf{25} (1993), 487--517 
\bibitem{MT93} S. Meyn and R. L. Tweedie, \textit{Stability of Marvovian Processes III: Foster-Lyapunov Criteria for Continuous-Time Processes}, Advances in Applied Probability \textbf{25} (1993), 518--548 
\bibitem{OT}  S. Olla and C. Tremoulet, \textit{ Equilibrium fluctuations for interacting Ornstein--Uhlenbeck particles}, Comm. Math. Phys. \textbf{233} (2003), 463--491.
\bibitem{PinskyBook} R. G. Pinsky, Positive Harmonic Functions and Diffusion, Cambridge studies in advanced mathematics, Cambridge University Press, 1995
\bibitem{Rev} D. Revuz, \textit{Lois du tout ou rien et comportement asymptotique pour les probabilit\'es de transition de processus de Markov}, Ann. Inst. H. Poincar\'e \textbf{19} (1983), 9--24
\bibitem{St94} L. Stettner, \textit{Remarks on Ergodic Conditions for Markov Processes on Polish Spaces}, Bulletin of the Polish Academy of Sciences \textbf{42} (1994), 103--114
\bibitem{StrPDE} D. W. Stroock, Partial Differential Equations for Probabilists, Cambridge studies in advanced mathematics 112, Cambridge University Press, 2008
\bibitem{Vil} C. Villani, \textit{Hypocoercivity}, Mem. Amer. Math. Soc. \textbf{202} (2009), no. 950, iv+141 pp. 
\bibitem{Wu01} L. Wu, \textit{Large and moderate deviations and exponential convergence for stochastic damping Hamiltonian systems}, Stoch. Proc. and their Appl. \textbf{91} (2001) 205--238
\bibitem{Wu04} L. Wu, \textit{Essential spectral radius for Markov semigroups (I): discrete time case}, Probab. Theory and Rel. Fields \textbf{128} (2004), 255--321 
\end{thebibliography}
\end{document}